\documentclass[a4paper, 11pt]{article}
\title{Equivariant weight filtration for real algebraic varieties with action}
\author{Fabien Priziac}
\date{}
\usepackage{amsfonts}
\usepackage{amsmath}
\usepackage{amsthm}
\usepackage{amssymb}
\usepackage[all]{xy}
\usepackage{graphicx}

\setlength{\evensidemargin}{0pt}
\setlength{\oddsidemargin}{0pt}
\setlength{\textwidth}{450pt}

\newtheorem{de}{Definition}[section]
\newtheorem{theo}[de]{Theorem}
\newtheorem{prop}[de]{Proposition}
\newtheorem{cor}[de]{Corollary}
\newtheorem{defprop}[de]{Definition and Proposition}

\newtheorem{lem}[de]{Lemma}

\newcommand{\Ext}{\text{Ext}}
\newcommand{\Hom}{\text{Hom}}

\theoremstyle{remark}
\newtheorem{rem}[de]{Remark}
\newtheorem{ex}[de]{Example}

\begin{document}

\maketitle

\begin{abstract}
We show the existence of a weight filtration on the equivariant homology of real algebraic varieties equipped with a finite group action, by applying group homology to the weight complex of McCrory and Parusi\'nski. If the group is of even order, we can not extract additive invariants directly from the induced spectral sequence. 

Nevertheless, we construct finite additive invariants in terms of bounded long exact sequences, recovering Fichou's equivariant virtual Betti numbers in some cases. In the case of the two-elements group, we recover these additive invariants by using globally invariant chains and the equivariant version of Guill\'en and Navarro Aznar's extension criterion. 
\end{abstract}
\footnote{Keyword : equivariant homology, weight filtration, real algebraic varieties, group action, additive invariants.
\\
{\it 2010 Mathematics Subject Classification :} 14P25, 14P10, 57S17, 57S25 

Research resulting from a doctoral thesis carried out at the laboratory IRMAR (Institute of Mathematical Research of Rennes), inside the University of Rennes 1.}

\section{Introduction}

In \cite{Del}, P. Deligne established the existence of the weight filtration on the rational cohomology of complex algebraic varieties. A real analog of this filtration was introduced by B. Totaro on the Borel-Moore $\mathbb{Z}_2$-homology of real algebraic varieties in \cite{Tot}. Using an extension criterion for functors defined on nonsingular varieties by F. Guill\'en and V. Navarro Aznar (\cite{GNA}), C. McCrory and A. Parusi\'nski showed in \cite{MCP} that this weight filtration for real algebraic varieties can be induced from a functorial (with respect to proper regular morphisms) filtered chain complex, defined up to filtered quasi-isomorphism. Considering the associated spectral sequence (that does not collapse at level two, contrary to the complex framework), they highlight the information contained in this invariant. In particular, McCrory and Parusi\'nski extract from the real weight spectral sequence their additive virtual Betti numbers (\cite{MCP-VB}). Furthermore, they realize the weight complex from the chain level, using resolution of singularities (\cite{Hiro}). On the real algebraic varieties, this geometric filtration coincides with a filtration defined on the category of $\mathcal{AS}$-sets (\cite{Kur}, \cite{KP}) and continuous proper maps with $\mathcal{AS}$-graph, using Nash-constructible functions (\cite{MCP-ACF}, \cite{MCP}).
   
In \cite{Pri-CA}, we considered real algebraic varieties equipped with a finite group action. By taking advantage of the functoriality of the weight complex, we equipped it with the induced group action. We then used an equivariant version of the extension criterion of Guill\'en and Navarro Aznar (theorem \ref{crit_action}) to show the uniqueness of the weight complex with action, with respect to extension, acyclicity and additivity properties, up to equivariant filtered quasi-isomorphism (theorem \ref{th_weight-complex-act}). Focusing on its realization by the Nash-constructible filtration, we obtained a filtered version of the Smith short exact sequence for an algebraic involution (theorem \ref{nash-smith_ex}). Futhermore, in the case of an involution acting without fixed point on a compact real algebraic variety, we related by a filtered isomorphism the invariant filtered chains and the filtered chains of the arc-symmetric quotient (proposition \ref{fil_Nash_quotient}).

In this paper, we apply a functor (definition and proposition \ref{fonct_L}) to the weight complex with action (definition \ref{def_eq_we_co}) in order to induce a weight filtration on the equivariant homology of real algebraic varieties equipped with a finite group action. We show the uniqueness of the equivariant weight complex, with respect to extension, acyclicity and additivity properties, similarly to the previous frameworks (theorem \ref{equiv_wei_comp}). Nevertheless, some significative differences appear between the induced spectral sequence and the weight spectral sequence. In particular, the equivariant weight spectral sequence is no longer left bounded (corollaries \ref{equi_weight_spec_max_dim} and \ref{equiv_weight_spec_bounds}). As a consequence, the long exact sequences of additivity provided by the equivariant weight spectral sequence are not finite in general. Moreover, the equivariant weight spectral sequence of a compact nonsingular variety does not degenerate at level two in general (proposition \ref{var_comp-non-sing_equiv}). This prevents us to recover some additive invariants directly from the page two of the equivariant weight spectral sequence, unlike the non-equivariant set-up.

In the final part of this work, we identify finite additive invariants in terms of bounded long exact sequences of spectral sequences, provided by bounded double complexes which extend the additivity of the Nash-constructible filtration and of the weight spectral sequence, and are related to the equivariant weight spectral sequence. In the case of the two-elements group, we use the Smith Nash-constructible exact sequence to identify additive invariants involving the geometry of invariant chains and the geometry of the fixed points set. We then show that they coincide with G. Fichou's additive equivariant virtual Betti numbers (\cite{GF}) in some cases (theorem \ref{coinc_b-k-g_eq-virt-bet}). After having recovered the equivariant virtual Betti numbers for $G = \mathbb{Z}/2\mathbb{Z}$ by using the equivariant extension criterion (theorem \ref{crit_action}) on invariant chains, we finally highlight sufficient conditions for the Nash-constructible filtration to compute them.
\\
 
We begin this paper by recalling the principal definitions and properties from \cite{Pri-CA} about the weight complex with action that we will need. In particular, we give the equivariant extension criterion and the Smith Nash-constructible short exact sequence for an algebraic involution.

In section \ref{sect_equiv_weight_fil}, after some reminders about group cohomology and homology, we define a functor computing group homology and equivariant homology. Showing the compatibility of this functor with filtered categories, we apply it to the weight complex with action to obtain the equivariant weight complex. We then study the induced spectral sequence. We give also some examples for the computation of the equivariant weight filtration, and justify that the odd-order group case is the simple case.

Finally, in section \ref{sect_add_inv}, we explain how we obtain additive invariants from double complexes induced by the Nash-constructible filtration and group cohomology. In some cases, we recover the equivariant virtual Betti numbers. In the last paragraph, we recover all of these for $G = \mathbb{Z}/2\mathbb{Z}$, considering the invariant chains.
\\

{\bf Acknowledgements.} The author wishes to thank M. Coste, G. Fichou, F. Guill\'en, T. Limoges, C. McCrory and A. Parusi\'nski for useful discussions and comments.

\section{Weight complex with action} \label{compl_act}

In this section, we recall the important results from \cite{Pri-CA}, in particular the existence and uniqueness of the weight complex with action (theorem \ref{th_weight-complex-act}) and the exactness of what we called the Smith Nash-constructible short sequence for an involution (theorem \ref{nash-smith_ex}).

For the reader's convenience, we first recall the definition of the complex of semialgebraic chains with closed supports, before equipping it with a group action.

\subsection{Semialgebraic chains with action}

We recall the definition from \cite{MCP}, Appendix, of the semialgebraic chains with closed supports of a semialgebraic subset $X$ of the set of real points of a real algebraic variety. Here, a real algebraic variety is a reduced separated scheme of finite type over $\mathbb{R}$.

\begin{de} For all $k \geq 0$, we denote by $C_k(X)$ the quotient of the $\mathbb{Z}_2$-vector space generated by the closed semialgebraic subsets of $X$ of dimension $\leq k$, by the relations
\begin{itemize}
	\item the sum $A + B$ is equivalent to the closure $cl_X(A \div B)$ in $X$ (with respect to the strong topology) of the symmetric difference of $A$ and $B$,
	\item the class of $A$ is zero if the dimension of $A$ is strictly smaller than $k$.
\end{itemize}
A semialgebraic chain with closed supports of dimension $k$ is by definition an equivalence class of $C_k(X)$. Any chain $c$ of $C_k(X)$ can be written as the class of a closed semialgebraic subset $A$ of $X$ of dimension $\leq k$, denoted by $[A]$.

The boundary operator $\partial_k : C_k(X) \rightarrow C_{k-1}(X)$ is defined by $\partial_k c = [\partial A]$, if $c = [A] \in C_k(X)$, where $\partial A$ denotes the semialgebraic boundary $\{x \in A~|~\Lambda \mathbf{1}_A(X) \equiv 1~{\rm mod}~2\}$ of $A$ ($\Lambda$ is the link on the constructible functions, cf \cite{MCP-ACF}).
\end{de}

Consider now a group $G$ acting on $X$ by semialgebraic homeomorphisms. By functoriality of the semialgebraic chains with closed supports, an action by linear isomorphisms is induced on the complex $C_*(X)$.

The complex $C_*(X)$ equipped with this action becomes a $G$-complex, that is the induced action of $G$ on $C_*(X)$ commutes with its differential. Moreover, the semialgebraic chains with action are functorial with respect to equivariant proper continuous semialgebraic maps, and the operations of equivariant restriction, closure and pullback are equivariant.

For more details about semialgebraic chains, we refer to the appendix of \cite{MCP}, and for their equivariance, to the second section of \cite{Pri-CA}.

\subsection{Weight complex with action} \label{sect_comp_act}

Let $G$ be a finite group.

We give the steps that led us to the construction and the uniqueness of the weight complex with action in \cite{Pri-CA}, notably an equivariant version of the extension criterion \cite{GNA} Th\'eor\`eme 2.2.2 of F. Guill\'en and V. Navarro Aznar (theorem \ref{crit_action}). 

The next definition is about notations for the equivariant categories we are working in.

\begin{de} We denote by
\begin{itemize}
	\item $\mathbf{Sch}_c^G(\mathbb{R})$ the category of real algebraic varieties equipped with an action of $G$ by algebraic isomorphisms -we call such objects real algebraic $G$-varieties- and equivariant regular proper morphisms,
	\item $\mathbf{Reg}_{comp}^G(\mathbb{R})$ the subcategory of compact nonsingular $G$-varieties,
	\item $\mathbf{V}^G(\mathbb{R})$ the subcategory of projective nonsingular $G$-varieties.
\end{itemize}
We denote also by
\begin{itemize}
	\item $\mathcal{C}^G$ the category of bounded $G$-complexes of $\mathbb{Z}_2$-vector spaces equipped with an increasing bounded filtration by $G$-complexes with equivariant inclusions -we call such objects filtered $G$-complexes- and equivariant morphisms of filtered complexes,
	\item $\mathcal{D}^G$ the category of bounded $G$-complexes and equivariant morphisms of complexes.
\end{itemize}
\end{de}

By an action of $G$ by algebraic isomorphims on a real algebraic variety $X$, we mean an action by isomorphisms of schemes such that the orbit of any point in $X$ is contained in an affine open subscheme.

For any real algebraic $G$-variety $X$, we will denote by $C_*(X)$ the semialgebraic chain $G$-complex of the set of real points of $X$.
\\

In \cite{Pri-CA} Th\'eor\`eme 3.5, we defined the weight complex with action of $G$ on the category of real algebraic $G$-varieties, by equipping McCrory-Parusi\'nski's weight complex (\cite{MCP} Theorem 1.1) with the action induced by functoriality :

\begin{theo} \label{th_weight-complex-act} {\rm(\cite{Pri-CA} Th\'eor\`eme 3.5)} The functor 
$$F^{can} C_{*} : \mathbf{V}^G(\mathbb{R}) \longrightarrow H o \, \mathcal{C}^G~;~X \mapsto F^{can} C_*(X)$$
admits an extension to a functor 
$${}^G\!\mathcal{W} C_{*} : \mathbf{Sch}_c^G(\mathbb{R}) \longrightarrow H o \, \mathcal{C}^G$$
defined for all real algebraic $G$-varieties and all equivariant proper regular morphisms, which satisfies the following properties :
\begin{enumerate}
	\item Acyclicity : For any acyclic square 
\begin{equation*} \label{acyclic_square}
\begin{array}{ccc}
\ \widetilde{Y} & \rightarrow & \widetilde{X} \\
  \downarrow &      &  \downarrow_{\pi}  \\
  Y & \xrightarrow{i} & X 
  \end{array} \tag{2.1}
 \end{equation*}
in $\mathbf{Sch}_c^G(\mathbb{R})$ (that is a commutative diagram (\ref{acyclic_square}) of objects and morphisms of $\mathbf{Sch}_c^G(\mathbb{R})$ such that $i$ is an equivariant inclusion of a closed subvariety, $\widetilde{Y} = \pi^{-1}(Y)$ and the restriction $\pi : \widetilde{X} \setminus \widetilde{Y} \rightarrow X \setminus Y$ is an equivariant isomorphism), the simple filtered complex of the $\square_1^{+}$-diagram in $\mathcal{C}^G$
	$$\begin{array}{ccc}
\ {}^G\!\mathcal{W} C_{*}(\widetilde{Y}) & \rightarrow & {}^G\!\mathcal{W} C_{*}(\widetilde{X}) \\
  \downarrow &      &  \downarrow  \\
  {}^G\!\mathcal{W} C_{*}(Y) & \rightarrow & {}^G\!\mathcal{W} C_{*}(X) 
  \end{array}$$
  is acyclic (i.e. isomorphic to the zero complex in $H o \, \mathcal{C}^G$).
  	\item Additivity : For an equivariant closed inclusion $Y \subset X$, the simple filtered complex of the $\square_0^{+}$-diagram in $\mathcal{C}^G$
	$${}^G\!\mathcal{W} C_{*}(Y) \rightarrow {}^G\!\mathcal{W} C_{*}(X)$$
	is isomorphic to ${}^G\!\mathcal{W} C_{*}(X \setminus Y)$.
\end{enumerate}
Such a functor ${}^G\!\mathcal{W} C_{*}$ is unique up to a unique isomorphism of $H o \, \mathcal{C}^G$.
\end{theo}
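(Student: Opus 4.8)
The plan is to deduce Theorem~\ref{th_weight-complex-act} from the equivariant extension criterion \ref{crit_action}, which is the genuine engine behind the statement. Since the criterion guarantees, for a suitable functor $G_{*}$ defined on $\mathbf{V}^G(\mathbb{R})$ (nonsingular projective $G$-varieties), the existence of a unique (up to unique isomorphism in $H\circ\mathcal{C}^G$) extension to $\mathbf{Sch}_c^G(\mathbb{R})$ satisfying the acyclicity and additivity conditions \textbf{(1)} and \textbf{(2)}, the whole proof reduces to checking that the candidate functor $F^{can}C_{*}$, viewed as a functor $\mathbf{V}^G(\mathbb{R})\to H\circ\mathcal{C}^G$, satisfies the hypotheses of \ref{crit_action}. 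Concretely, I would verify that: (i) $F^{can}C_{*}$ is well defined on $\mathbf{V}^G(\mathbb{R})$, i.e.\ the canonical filtration on semialgebraic chains carries the $G$-action functorially and lands in $\mathcal{C}^G$ after passing to $H\circ\mathcal{C}^G$; (ii) it sends the empty variety to the zero complex; (iii) it satisfies the multiplicativity/disjoint-union normalization on $\mathbf{V}^G(\mathbb{R})$; and (iv) it satisfies the requisite condition on elementary acyclic squares coming from equivariant blow-ups of nonsingular projective $G$-varieties along nonsingular $G$-invariant centers — this is the $(F2)$ or $\mathbf{(ED)}$-type hypothesis of the criterion.

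First I would recall that, non-equivariantly, McCrory and Parusi\'nski established exactly these properties for $F^{can}C_{*}$ in \cite{MCP} (Theorem 1.1 and its proof), so the task here is purely to promote each verification to the $G$-equivariant setting. Because $G$ acts on everything in sight by semialgebraic homeomorphisms, and because the canonical filtration $F^{can}$ is constructed functorially from resolutions of singularities, the key point is that equivariant resolution of singularities (respectively equivariant factorization of birational maps into equivariant blow-ups along smooth invariant centers) is available: this is what makes an equivariant acyclic square of the required type exist and what makes the induced maps $G$-equivariant. Hence for (iv) I would take an equivariant blow-up square, observe that it is an acyclic square in $\mathbf{Sch}_c^G(\mathbb{R})$, and note that the non-equivariant computation showing the associated simple complex is acyclic in $H\circ\mathcal{C}$ is carried out by $G$-equivariant maps, so the $G$-action descends and the simple complex is zero already in $H\circ\mathcal{C}^G$. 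Uniqueness is then immediate: it is part of the conclusion of \ref{crit_action}.

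The main obstacle I anticipate is bookkeeping rather than conceptual: one must be careful that all the quasi-isomorphisms used in the non-equivariant argument (the filtered quasi-isomorphisms realizing $F^{can}C_{*}$ via resolution, and the acyclicity of Mayer--Vietoris-type complexes) are chosen $G$-equivariantly and are compatible with the filtration by $G$-subcomplexes, so that one genuinely works inside $\mathcal{C}^G$ and not merely inside $\mathcal{C}$ with an extra symmetry added a posteriori. In particular, the homotopies witnessing acyclicity in $H\circ\mathcal{C}$ need not be $G$-equivariant in general; what saves the argument is that the equivariant criterion \ref{crit_action} only requires acyclicity of the relevant squares in $H\circ\mathcal{C}^G$, and since $\mathbb{Z}_2$-vector spaces with a $\mathbb{Z}_2[G]$-module structure still behave well here (chains are over $\mathbb{Z}_2$), one can average or, more properly, invoke that the equivariant simple complex of an equivariant acyclic square is acyclic by the very formulation of \ref{crit_action}'s hypotheses — which is exactly the content that \cite{Pri-CA} arranged. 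So the proof is essentially: state that $F^{can}C_{*}$ restricted to $\mathbf{V}^G(\mathbb{R})$ satisfies $\mathbf{(ED)}$-type hypotheses, apply \ref{crit_action}, and read off both existence of ${}^G\!\mathcal{W}C_{*}$ with properties \textbf{(1)}–\textbf{(2)} and its uniqueness up to unique isomorphism in $H\circ\mathcal{C}^G$.
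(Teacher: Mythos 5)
Your proposal matches the intended argument: this theorem is recalled from \cite{Pri-CA} (Th\'eor\`eme 3.5), where it is proved exactly as you describe, by checking that $F^{can}C_{*}$ on $\mathbf{V}^G(\mathbb{R})$ satisfies the hypotheses (F1) and (F2) of the equivariant extension criterion \ref{crit_action} (made available by equivariant resolution and compactification via \cite{DL}) and then reading off existence, acyclicity, additivity and uniqueness from its conclusion --- the same template the present paper uses again for Theorem \ref{equiv_wei_comp}. Your only wobbly aside is the suggestion to ``average'', which is neither available over $\mathbb{Z}_2$ for even-order $G$ nor needed: acyclicity in $H\circ\mathcal{C}^G$ means vanishing of the $E^1$ page of the simple filtered complex, a condition insensitive to the $G$-action, so the non-equivariant verification of (F2) for equivariant blow-up squares transfers directly.
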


The filtration $F^{can}$ is the canonical filtration of bounded chain complexes (see \cite{Del2} and also \cite{MCP}) and $H o \,  \mathcal{C}^G$ denotes the category $\mathcal{C}^G$ localised with respect to equivariant filtered quasi-isomorphisms, that we will sometimes call quasi-isomorphisms of $\mathcal{C}^G$, i.e. the equivariant filtered morphisms between filtered $G$-complexes that induce an equivariant isomorphism at the level $E^1$ of the induced spectral sequences.
\\

In order to show the uniqueness of the weight complex with action ${}^G\!\mathcal{W} C_{*}$, we used an equivariant version of the extension criterion \cite{GNA} Th\'eor\`eme 2.2.2 of Guill\'en and Navarro Aznar (that McCrory and Parusi\'nski used to show the uniqueness of their weight complex, see \cite{MCP} Theorem 1.1), which justifies the restriction to the case of a finite group, for which there exist an equivariant compactification, an equivariant Chow-Hironaka lemma and an equivariant resolution of singularities in the category $\mathbf{Sch}_c^G(\mathbb{R})$ (by \cite{DL} Appendix) :

\begin{theo} \label{crit_action} {\rm(\cite{Pri-CA} Th\'eor\`eme 3.5)} Let $\mathcal{A}$ be a category of cohomological descent and  
$$F : \mathbf{V}^G(\mathbb{R}) \longrightarrow H o \, \mathcal{A}$$
be a contravariant $\Phi$-rectified functor verifying 
\begin{itemize}
\item[(F1)] $F(\emptyset) = 0$, and the canonical morphism $F(X \sqcup Y) \rightarrow F(X) \times F(Y)$ is an isomorphism (in $H o \, \mathcal{A}$),
\item[(F2)] if $X_{\bullet}$ is an elementary acyclic square of $\mathbf{V}^G(\mathbb{R})$, then $\mathbf{s}F(X_{\bullet})$ is acyclic.
\end{itemize}
Then, there exists an extension of $F$ to a contravariant $\Phi$-rectified functor
$$F_c : \mathbf{Sch}_c^G(\mathbb{R}) \rightarrow H o \, \mathcal{A}$$
such that :
\begin{enumerate}
	\item if $X_{\bullet}$ is an acyclic square of $\mathbf{Sch}_c^G(\mathbb{R})$, then $\mathbf{s}F_c(X_{\bullet})$ is acyclic,
	\item if $Y$ is a closed subvariety of $X$ stable under the action of $G$ on $X$, we have a natural isomorphism (in $H o \, \mathcal{A}$)
$$F_c(X \setminus Y) \cong \mathbf{s} (F_c(X) \rightarrow F_c(Y)).$$
\end{enumerate}
Furthermore, this extension is unique up to a unique isomorphism.
\end{theo}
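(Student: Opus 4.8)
The argument follows the strategy of Guill\'en and Navarro Aznar for the non-equivariant extension criterion (\cite{GNA} Th\'eor\`eme 2.2.2), carried out in the equivariant category; this is the content of \cite{Pri-CA} Th\'eor\`eme 3.5. The plan is to define $F_c$ on objects by applying $F$ to an equivariant cubical hyperresolution with terms in $\mathbf{V}^G(\mathbb{R})$, to check that this is independent of the chosen hyperresolution up to canonical isomorphism in $H \circ \mathcal{C}$, and then to deduce functoriality together with the acyclicity and additivity properties.

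The first step is geometric: one must show that every object $X$ of $\mathbf{Sch}_c^G(\mathbb{R})$ admits an augmented equivariant cubical hyperresolution $X_\bullet \to X$, of cohomological descent, with all terms in $\mathbf{V}^G(\mathbb{R})$. This rests on three inputs available in the category of real algebraic $G$-varieties precisely because $G$ is finite: equivariant compactification, an equivariant Chow--Hironaka lemma allowing one to dominate a $G$-variety by a projective nonsingular one through an equivariant proper modification, and equivariant resolution of singularities (\cite{DL}, Appendix). Iterating the dominate-and-resolve procedure $G$-equivariantly produces the desired hyperresolution inside $\mathbf{V}^G(\mathbb{R})$.

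One then sets $F_c(X) := \mathbf{s}\, F(X_\bullet)$. To see this is well defined up to canonical isomorphism, one uses that any two equivariant hyperresolutions of $X$ are dominated by a common one (again by the geometric inputs above), and that hypothesis (F2), stated for elementary acyclic squares, propagates along cubical diagrams --- via the $\Phi$-rectification of $F$, which lets one manipulate $F$ coherently at the level of diagrams rather than merely up to homotopy --- to yield acyclicity of $\mathbf{s}F$ on arbitrary acyclic squares and on morphisms of hyperresolutions; together with (F1) this forces the comparison maps to be isomorphisms. Functoriality of $F_c$ for equivariant proper regular morphisms then follows because any such morphism can be covered by a morphism of equivariant hyperresolutions. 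Property (1) is acyclicity of $\mathbf{s}F_c$ on acyclic squares of $\mathbf{Sch}_c^G(\mathbb{R})$, obtained by inserting the square's hyperresolution into a larger cubical diagram and invoking the descent already established; property (2) is obtained by resolving the pair $(X, Y)$, for $Y \subset X$ a $G$-stable closed subvariety, by an equivariant acyclic square and identifying $F_c(X \setminus Y)$ with $\mathbf{s}(F_c(X) \to F_c(Y))$. Uniqueness up to unique isomorphism is automatic, since any extension satisfying (1) must agree with $\mathbf{s}F(X_\bullet)$ on a hyperresolution.

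The main obstacle is entirely in the first step: checking that compactification, Chow--Hironaka domination and resolution of singularities can all be performed $G$-equivariantly and with enough functoriality to assemble cubical hyperresolutions inside $\mathbf{V}^G(\mathbb{R})$, which is exactly what restricts the statement to finite $G$. Once this geometric input is secured, the homological-algebra core of the Guill\'en--Navarro Aznar argument transfers with no essential change, since each of its constructions is functorial and therefore automatically carries the induced $G$-action.
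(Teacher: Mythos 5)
This theorem is not proved in the present paper: it is recalled verbatim from \cite{Pri-CA} (Th\'eor\`eme 3.5), and the surrounding text indicates exactly the strategy you describe, namely that the Guill\'en--Navarro Aznar extension machinery applies once equivariant compactification, an equivariant Chow--Hironaka lemma and equivariant resolution of singularities are available for a finite group $G$ by the appendix of \cite{DL}. Your outline therefore matches the intended proof in approach and in its identification of the genuinely new input (the equivariant geometric statements), the rest being the formal GNA argument carried out in the equivariant categories.
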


\begin{rem} \label{rem_fil_geom_act}
\begin{itemize} 
	\item For every real algebraic $G$-variety $X$, we have an equivariant isomorphism $H_n({}^G\!\mathcal{W} C_*(X)) \cong H_n(X)$ for all $n \in \mathbb{Z}$.
	\item The weight filtration and the weight spectral sequence are equipped with the action of $G$ induced by the $G$-action on the weight complex. 
	\item McCrory and Parusi\'nski's geometric/Nash-constructible filtration on real algebraic varieties (\cite{MCP} sections 2 and 3), equipped with the action of $G$ induced by functoriality, realizes the weight complex with action.
	 \item If $X$ is a compact nonsingular $G$-variety, ${}^G\!\mathcal{W} C_*(X)$ is isomorphic to $F^{can} C_*(X)$ in $H o \, \mathcal{C}^G$, and the same goes for all the realizations of the weight complex with action, in particular for the geometric/Nash-constructible filtration with action.	
\end{itemize}

\end{rem}

For more details about the weight complex with action, we refer to \cite{Pri-CA} section 3.
\\

In the following, the weight complex with action will be simply denoted by $\mathcal{W} C_*$ if the context is explicit.

\subsection{The Smith Nash-constructible exact sequence in the case $G~=~\mathbb{Z}/2 \mathbb{Z}$} \label{ex_seq_smith_nash_cons}

In the last paragraph of this section, we recall a result from \cite{Pri-CA} section 5 : one can use the Nash-constructible filtration to implement a notion of regularity into the Smith short exact sequence of a real algebraic variety equipped with an involution.

More precisely, let $X$ be a real algebraic variety equipped with an algebraic involution $\sigma$, then any invariant chain can be split with some regularity into two parts exchanged by the action of $G := \{1, \sigma\} = \mathbb{Z}/2\mathbb{Z}$ (modulo the restriction to the fixed points set of $X$) :

\begin{theo} \label{nash-smith_ex} {\rm(\cite{Pri-CA} Th\'eor\`eme 5.5)} Let $c \in (\mathcal{N}_{\alpha} C_k(X))^G$ be a chain of $X$ of dimension $k$ and of index $\alpha$ with respect to the Nash-constructible filtration. Then there exists $c' \in \mathcal{N}_{\alpha + 1} C_k(X)$ such that
$$c = c_{|X^G} + (1 + \sigma) c'$$
(the restriction $c_{|X^G}$ is in $\mathcal{N}_{\alpha} C_k(X^G)$).

Consequently, for all $\alpha$, the short sequence of complexes
$$0 \rightarrow \mathcal{N}_{\alpha} C_*(X^G) \oplus (1+ \sigma) T^{\alpha + 1}_*(X) \rightarrow \mathcal{N}_{\alpha} C_*(X) \rightarrow (1 + \sigma) \mathcal{N}_{\alpha} C_*(X) \rightarrow 0,$$
where $T^{\alpha + 1}_k(X) := \{c \in \mathcal{N}_{\alpha+1} C_k(X)~|~(1 + \sigma) c \in \mathcal{N}_{\alpha} C_k(X) \}$, is exact. We call this sequence the Smith Nash-constructible exact sequence of $X$ of index $\alpha$.
\end{theo}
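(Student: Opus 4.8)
The plan is to first establish the chain-level splitting $c = c_{|X^{G}} + (1+\sigma)c'$, and then to deduce the exactness of the Smith Nash-constructible sequence by a purely formal argument in characteristic $2$, using only the standard compatibilities of the Nash-constructible filtration with inclusions, restriction and closure from \cite{MCP}. For the splitting, I would first reduce to the case $c_{|X^{G}} = 0$: since restriction to the closed subvariety $X^{G}$ keeps the Nash-constructible degree and since $c_{|X^{G}}$ is fixed by $\sigma$, the chain $c - c_{|X^{G}}$ is again invariant and lies in $\mathcal{N}_{\alpha}C_k(X)$, and it restricts to $0$ on $X^{G}$; so it suffices to split such a chain. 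Now put $U := X \setminus X^{G}$, the open subvariety on which $G = \{1,\sigma\}$ acts freely, and let $p \colon U \to V := U/G$ be the resulting \'etale double cover, with deck transformation $\sigma$, so that $p^{*}p_{*} = 1 + \sigma$ on semialgebraic chains. The restriction $c_{|U}$ is an invariant chain in $\mathcal{N}_{\alpha}C_k(U)$, and the key claim is that it can be written $c_{|U} = (1+\sigma)\,c'_{U}$ with $c'_{U} \in \mathcal{N}_{\alpha+1}C_k(U)$ --- geometrically, $c'_{U}$ is ``one half'' of $c_{|U}$. Granting this, I would set $c' := \overline{c'_{U}} \in \mathcal{N}_{\alpha+1}C_k(X)$ (closure being compatible with $\mathcal{N}_{\bullet}$); then $(1+\sigma)c'$ and $c$ have the same restriction to $U$, and both restrict to $0$ on $X^{G}$ --- for $(1+\sigma)c'$ because $\sigma$ fixes $X^{G}$ pointwise, so $((1+\sigma)c')_{|X^{G}} = 2\,c'_{|X^{G}} = 0$ --- hence $(1+\sigma)c' - c$ is a $k$-chain restricting to $0$ on both pieces of $X = U \sqcup X^{G}$, so it vanishes, i.e. $c = (1+\sigma)c'$.

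The step I expect to be the main obstacle is exactly the key claim: the \emph{one-step} loss in the Nash-constructible degree when dividing an invariant chain by $1+\sigma$. The underlying topological statement is soft: every invariant chain on $U$ descends to a chain $d$ on $V$ with $c_{|U} = p^{*}d$, and choosing one sheet of $p$ over each top-dimensional stratum of a semialgebraic stratification of $V$ trivializing $p$, then taking closures, produces $c'_{U}$ with $p_{*}c'_{U} = d$, whence $(1+\sigma)c'_{U} = p^{*}p_{*}c'_{U} = c_{|U}$. What is delicate is that such a ``half'' only fits together up to lower-dimensional discrepancies --- the obstruction to a global section being the class of the cover in $H^{1}(V;\mathbb{Z}/2\mathbb{Z})$ --- so $c'_{U}$ need not remain in $\mathcal{N}_{\alpha}$. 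Quantifying the loss requires the fine structure of the Nash-constructible filtration: working with its description through the divisibility, in the iterated-link sense, of the Nash-constructible functions representing chains, one has to check that passing to the half costs exactly one divisibility step, which reflects the fact that forming $(1+\sigma)(\,\cdot\,)$ behaves, with respect to $\mathcal{N}_{\bullet}$, like multiplication by $2$. This is where the argument is genuinely non-formal; everything else reduces to compatibilities already available in \cite{MCP}.

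Granting the splitting, deducing the exact sequence is formal. The map $r \colon \mathcal{N}_{\alpha}C_{*}(X) \to (1+\sigma)\mathcal{N}_{\alpha}C_{*}(X)$, $c \mapsto (1+\sigma)c$, is a surjective morphism of complexes by definition of its target, and in characteristic $2$ its kernel is the subcomplex $(\mathcal{N}_{\alpha}C_{*}(X))^{G}$ of invariant chains, since $(1+\sigma)c = 0 \iff \sigma c = c$. Both $\mathcal{N}_{\alpha}C_{*}(X^{G})$ --- embedded through the closed immersion $X^{G}\hookrightarrow X$, which is a filtered chain map --- and $(1+\sigma)T^{\alpha+1}_{*}(X)$ are $\partial$-stable subcomplexes of $(\mathcal{N}_{\alpha}C_{*}(X))^{G}$ (for the latter, $\partial(1+\sigma)c' = (1+\sigma)\partial c'$ with $\partial c' \in T^{\alpha+1}_{k-1}(X)$ because $\partial$ preserves $\mathcal{N}_{\bullet}$). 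Their sum is all of $(\mathcal{N}_{\alpha}C_{*}(X))^{G}$: the inclusion $\subseteq$ is immediate, and $\supseteq$ is precisely the splitting, the $c'$ it produces satisfying $(1+\sigma)c' = c - c_{|X^{G}} \in \mathcal{N}_{\alpha}C_k(X)$, hence $c' \in T^{\alpha+1}_k(X)$. Finally the sum is direct: if $c_{0} \in \mathcal{N}_{\alpha}C_{*}(X^{G})$ equals $(1+\sigma)c'$, restricting to $X^{G}$ gives $c_{0} = (c_{0})_{|X^{G}} = ((1+\sigma)c')_{|X^{G}} = 0$. Thus $0 \to \mathcal{N}_{\alpha}C_{*}(X^{G}) \oplus (1+\sigma)T^{\alpha+1}_{*}(X) \to \mathcal{N}_{\alpha}C_{*}(X) \to (1+\sigma)\mathcal{N}_{\alpha}C_{*}(X) \to 0$ is exact.
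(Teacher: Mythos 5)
First, note that the paper you are reading does not prove this statement: Theorem \ref{nash-smith_ex} is imported verbatim from \cite{Pri-CA} (Th\'eor\`eme 5.5), so there is no internal proof to compare against; your proposal has to stand on its own against the argument in that reference.

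Your third paragraph (deducing the exact sequence from the splitting) is correct and is the standard formal argument in characteristic $2$. The problem is the splitting itself, where your proposal has a genuine gap that you partly acknowledge but understate. The entire content of the theorem is the assertion that the ``half'' $c'$ can be chosen in $\mathcal{N}_{\alpha+1}C_k(X)$, i.e.\ with a loss of exactly one step in the Nash-constructible filtration. Your construction --- pick a semialgebraic stratification of $V=U/G$ trivializing the double cover, choose one sheet over each top-dimensional stratum, take closures --- produces an essentially arbitrary semialgebraic half of $c_{|U}$. Such a chain is only guaranteed to lie in $\mathcal{N}_0C_k$, the whole chain group, and there is no reason for it to lie in $\mathcal{N}_{\alpha+1}$ when $\alpha+1<0$. (The case $k=1$, $\alpha=-1$ is misleadingly easy because $\mathcal{N}_0C_1=C_1$; already for an invariant pure $2$-cycle, $c\in\mathcal{N}_{-2}C_2$, a generic semialgebraic half need not be in $\mathcal{N}_{-1}C_2$.) So this is not merely a verification left unquantified: the half must be \emph{constructed} from the Nash-constructible data representing $c$ (the function $\varphi$ with the prescribed $2$-adic divisibility, together with resolution of singularities and an induction on dimension, as in \cite{Pri-CA}), not chosen after the fact from a semialgebraic trivialization. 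Two secondary issues: the quotient $V=U/G$ of the (generally non-compact) free locus is not a real algebraic variety, and the paper's Proposition \ref{fil_Nash_quotient} only controls the Nash-constructible filtration of the quotient in the compact fixed-point-free case, so the descent step $c_{|U}=p^*d$ at the level of $\mathcal{N}_\bullet$ is itself not available in the generality you use it; and your identity $p^*p_*=1+\sigma$ only holds modulo lower-dimensional discrepancies, which must be absorbed somewhere. The reduction to $c_{|X^G}=0$ and the patching of $U$ and $X^G$ are fine.
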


The last property we recall is an interpretation of the Smith Nash-constructible exact sequence when the variety $X$ is compact and the action of $G$ is free. In this case, the quotient of the set of real points of $X$, denoted by $X_{\mathbb{R}}$, by the action of $G$ is an arc-symmetric set and the invariant chains of $X$ correspond to the chains of the quotient~:

\begin{prop} \label{fil_Nash_quotient} {\rm(\cite{Pri-CA} Proposition 5.6)} Let $X$ be a compact real algebraic variety equipped with a fixed-point free action of $G = \mathbb{Z}/2\mathbb{Z}$. Then the morphism of filtered complexes 
$$\left(\mathcal{N} C_*(X)\right)^G \rightarrow \mathcal{N} C_*\left(X_{\mathbb{R}}/G\right),$$
induced by the quotient map $\pi : X_{\mathbb{R}} \rightarrow X_{\mathbb{R}}/G$, is a filtered isomorphism (recall that by definition $\mathcal{N} C_*(X) = \mathcal{N} C_*(X_{\mathbb{R}})$).
\end{prop}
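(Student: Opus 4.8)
The plan is to prove the statement first at the level of ordinary semialgebraic chains -- identifying the complex $(C_*(X))^G$ of invariant chains with $C_*(X/G)$ through the map induced by $\pi$ -- and then to promote this to a filtered statement via a local study of the Nash-constructible filtration along $\pi$.

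First I would record the geometry of $\pi$. Because $X$ is compact and $G = \{1,\sigma\}$ acts freely by algebraic isomorphisms, the quotient $X/G$ of the set of real points is a compact $\mathcal{AS}$-set and $\pi : X \to X/G$ is a proper continuous semialgebraic map which is, in addition, a two-sheeted semialgebraic covering: every point of $X/G$ has a semialgebraic neighbourhood $U$ with $\pi^{-1}(U) = \widetilde{U} \sqcup \sigma\widetilde{U}$ and $\pi$ restricting to a semialgebraic homeomorphism $\widetilde{U} \xrightarrow{\sim} U$. In particular $\pi$ is a local homeomorphism, hence compatible with the boundary operator of semialgebraic chains, so the pullback $\pi^{*} : C_*(X/G) \to C_*(X)$ (preimage of supports) is a morphism of complexes. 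One checks that $\pi^{*}$ is injective with image exactly the subcomplex $(C_*(X))^G$: for an invariant closed semialgebraic $A$ one has $\pi^{*}([\pi(A)]) = [\pi^{-1}(\pi(A))] = [A \cup \sigma A] = [A]$, so $\pi^{*}$ is onto $(C_*(X))^G$ with inverse the map $\pi_{\flat}:[A]\mapsto[\pi(A)]$, which is a well-defined linear chain map on invariant chains precisely because freeness forces $\pi(A\div B) = \pi(A)\div\pi(B)$ for invariant $A,B$. This $\pi_{\flat} = (\pi^{*})^{-1}$ is the morphism \emph{induced by $\pi$} in the statement -- note that it is \emph{not} the proper pushforward $\pi_*$, which, over $\mathbb{Z}_2$ and for a free involution, vanishes on all invariant chains.

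It then remains to see that $\pi^{*}$ is compatible with the Nash-constructible filtrations, i.e. that for each $\alpha$ it restricts to an isomorphism $\mathcal{N}_\alpha C_*(X/G) \xrightarrow{\sim} (\mathcal{N}_\alpha C_*(X))^G$; the inverse map $\pi_{\flat}$ is then a filtered isomorphism. One inclusion, $\pi^{*}(\mathcal{N}_\alpha C_*(X/G)) \subseteq (\mathcal{N}_\alpha C_*(X))^G$, comes from the functoriality of the Nash-constructible filtration, together with the fact that pulling back a Nash-constructible function along the local isomorphism $\pi$ does not raise its level, and the $G$-equivariance of $\pi^{*}$. For the reverse inclusion I would argue locally: an invariant chain $c$ of level $\le\alpha$ equals $\pi^{*}(d)$ with $d=\pi_{\flat}(c)$, and over each trivializing open $U$ the chain $d|_U$ corresponds, under the semialgebraic homeomorphism $\pi:\widetilde{U}\to U$, to $c|_{\widetilde{U}}$, which has level $\le\alpha$; using that $X/G$ is compact and that the Nash-constructible filtration can be tested locally on a finite semialgebraic open cover, one concludes that $d$ has level $\le\alpha$. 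Alternatively -- and this is probably the most economical route given the tools at hand -- one feeds $X^G=\emptyset$ into the Smith Nash-constructible exact sequence of Theorem~\ref{nash-smith_ex}, which (as the kernel of $c\mapsto(1+\sigma)c$) identifies $(\mathcal{N}_\alpha C_*(X))^G$ with $(1+\sigma)T^{\alpha+1}_*(X)$, and then matches the latter with $\pi^{*}(\mathcal{N}_\alpha C_*(X/G))$ by the same local trivialization.

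The main obstacle is exactly this last point: controlling the Nash-constructible level under descent along $\pi$, that is, showing that a $G$-invariant chain of level $\le\alpha$ on $X$ drops to a chain of level $\le\alpha$ on the quotient. Everything else -- the covering-space picture of $\pi$, the well-definedness and chain-map property of $\pi_{\flat}$ on invariant chains, the equivariance of $\pi^{*}$ -- is formal once compactness and freeness are used. The descent rests on the local triviality of $\pi$ and on the local (sheaf-theoretic) behaviour of the Nash-constructible filtration, and carrying it out carefully, keeping track throughout of the supports of dimension $<k$, is where the real work lies.
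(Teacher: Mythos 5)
The paper itself does not prove this statement: it is quoted from \cite{Pri-CA} (Proposition 5.6) in a section of recalled results, so there is no in-text argument to compare yours against. Judged on its own terms, your proposal is correct and essentially complete at the unfiltered level: the covering-space description of $\pi$, the identification of the map in the statement with $\pi_{\flat}:[A]\mapsto[\pi(A)]$ rather than with the proper pushforward $\pi_*$ (which indeed kills invariant chains of a free involution over $\mathbb{Z}_2$), the passage to invariant representatives $A\cup\sigma A$, and the compatibility with $\partial$ via locality of the link are all right, and you correctly isolate the descent of the Nash-constructible level along $\pi$ as the only substantive point.

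That point, however, is the entire content of the proposition, and neither of your two routes closes it. For route (a): membership in $\mathcal{N}_{\alpha}C_k$ is witnessed by a \emph{globally} defined Nash-constructible function (a finite sum of proper pushforwards of constant functions from Nash manifolds, with prescribed $2$-adic divisibility), and no locality principle of the form ``the level can be tested on a finite semialgebraic open cover'' is available in this paper or in \cite{MCP}; that assertion would itself require proof. For route (b): with $X^G=\emptyset$, Theorem \ref{nash-smith_ex} writes an invariant $c\in(\mathcal{N}_{\alpha}C_k(X))^G$ as $(1+\sigma)c'$ with $c'$ only in $\mathcal{N}_{\alpha+1}C_k(X)$; if $\psi$ is a witness for $c'$ (values in $2^{k+\alpha+1}\mathbb{Z}$), then $\pi_{*}\psi$ is a Nash-constructible witness on $X/G$ whose odd support is that of $\pi_{\flat}(c)$, but it exhibits $\pi_{\flat}(c)$ only in $\mathcal{N}_{\alpha+1}C_k(X/G)$, one step short of the required $\mathcal{N}_{\alpha}$. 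Obtaining level $\alpha$ downstairs amounts to descending an invariant level-$\alpha$ witness $\varphi$ of $c$ itself, i.e.\ to showing that $\tfrac{1}{2}\pi_{*}\varphi$ is Nash-constructible --- precisely the kind of division-by-$2$ statement that the theory of Nash-constructible functions does not grant for free (it is the reason the factors $2^{k+\alpha}$ enter the definition at all). So the descent step needs a genuine argument, presumably the one carried out in \cite{Pri-CA}; your text is a correct reduction to that step rather than a proof of it.
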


\section{Equivariant weight filtration for real algebraic $G$-varieties} \label{sect_equiv_weight_fil}

Let $G$ be a finite group.

In this section, we construct a weight filtration on the equivariant homology of real algebraic $G$-varieties defined by J. van Hamel in \cite{VH}, applying on the weight complex with action some functor which computes this equivariant homology when applied to the complex of semialgebraic chains with closed supports (definition and proposition \ref{fonct_L}, definition \ref{hom_equiv}).

Significative differences will appear between the equivariant weight complex and the weight complex. In particular, unlike McCrory and Parusi\'nski's weight spectral sequence (\cite{MCP} section 1.3), the equivariant weight spectral sequence may not be bounded (example \ref{ex_suite_spec_poids_equiv}) and may not degenerate at level two in the compact nonsingular case (proposition \ref{var_comp-non-sing_equiv}, example \ref{ex_sphere_act}). 

Nevertheless, equivariant homology will provide us other spectral sequences (trivial in the non-equivariant case) useful to understand the equivariant geometry involved in the equivariant weight spectral sequence (proposition \ref{eq_noneq_weight_spec}).

\subsection{Group (co)homology and the functor $L$} \label{group_cohom_l_func}

In the first paragraph, we recall the basic background about group (co)homology with coefficients in a module and a chain complex, which we will use to define the equivariant homology of a real algebraic variety with action.
\\

If $M$ is a $\mathbb{Z}[G]$-module, the $n^{\text{th}}$ group of cohomology of the group $G$ with coefficients in $M$ is given by
$$H^n(G,M) := \Ext_{\mathbb{Z}[G]}^n(\mathbb{Z},M) = H^n(\Hom_{\mathbb{Z}[G]}(F_*,M)),$$
where $F_*$ is a projective resolution of $\mathbb{Z}$ by $\mathbb{Z}[G]$-modules.

\begin{rem}
\begin{itemize}
	\item For $n <0$, $H^n(G, M) = 0$.
	\item If $k$ is a ring and if $M$ is a $k[G]$-module, then for all $n \in \mathbb{Z}$, 
$$H^n(G,M) = \Ext_{\mathbb{Z}[G]}^n(\mathbb{Z},M) \cong \Ext_{k[G]}^n(k,M),$$ 
by an isomorphism of $k$-modules. In the rest of this paper, we will be considering $\mathbb{Z}_2$-vector spaces equipped with a linear action of $G$, so there will be no difference in considering a projective resolution of $\mathbb{Z}$ by $\mathbb{Z}[G]$-modules or a projective resolution of $\mathbb{Z}_2$ by $\mathbb{Z}_2[G]$-modules.
\end{itemize}

\end{rem}

\begin{ex} (\cite{Bro}) Let $G$ a finite cyclic group of order $d$ generated by $\sigma$. Then, if we denote $N := \sum_{1 \leq i \leq d} \sigma^i$, a projective resolution of $\mathbb{Z}$ by $\mathbb{Z}[G]$-modules is given by
$$\cdots \longrightarrow \mathbb{Z}[G] \xrightarrow{\sigma - 1} \mathbb{Z}[G] \xrightarrow{N} \mathbb{Z}[G] \xrightarrow{\sigma - 1} \mathbb{Z} \rightarrow 0,$$
and the cohomology of $G$ with coefficients in a $\mathbb{Z}[G]$-module $M$ by
$$H^n(G,M) =
\begin{cases}
\frac{M^G}{N M} \text{ if $n$ is an even positive integer,} \\
\frac{ker~(M \xrightarrow{N} M)}{(\sigma - 1) M} \text{ if $n$ is an odd positive integer,}\\
M^G \text{ if $n = 0$.} 
\end{cases}$$
\end{ex}

\begin{ex} \label{group_cohom_z2z} If $G = \mathbb{Z}/2\mathbb{Z}$ and if $M$ is a $\mathbb{Z}_2[G]$-module, the cohomology of $G$ with coefficients in $M$ is
$$H^n(G,M) =
\begin{cases}
M^G \text{ si $n = 0$,} \\
\frac{M^G}{(1+\sigma) M} \text{ si $n > 0$.}
\end{cases}$$
\end{ex}

For more details and background about group cohomology, see for instance \cite{Bro} or \cite{CTVZ}.
\\

Now we define the homology of $G$ with coefficients in a chain $G$-complex of $\mathbb{Z}_2$-vector spaces, using a functorial operation denoted by $L$. We denote by $\mathcal{D}_-$ the category of bounded above chain complexes of $\mathbb{Z}_2$-vector spaces, and $H o \, \mathcal{D}_-$ the category $\mathcal{D}_-$ localised with respect to quasi-isomorphisms.

\begin{defprop} \label{fonct_L} Let $K_*$ be in $\mathcal{D}^G$. Let $... \rightarrow F_2 \xrightarrow{\Delta_2} F_1 \xrightarrow{\Delta_1} F_0 \rightarrow \mathbb{Z} \rightarrow~0$ be a resolution of $\mathbb{Z}$ by projective $\mathbb{Z}[G]$-modules. 

Then the complex $L_*(K_*)$ is defined as the total complex associated to the double complex
$$(\Hom_G(F_{-p},K_q))_{p,q \in \mathbb{Z}}.$$ 

The operation $L : \mathcal{D}^G \rightarrow \mathcal{D}_-~;~K_* \mapsto L_*(K_*)$ is functorial.
\end{defprop}

\begin{rem} For $G = \{e\}$, considering $... \rightarrow 0 \xrightarrow{} 0 \xrightarrow{} \mathbb{Z} \xrightarrow{id} \mathbb{Z} \rightarrow 0$ as a projective resolution, we obtain $L_*(K_*) = K_*$.
\end{rem}

The homology of the group $G$ with coefficients in a $G$-complex $K_*$ is then defined as the homology of the complex $L_*(K_*)$ and denoted by $H_*(G, K_*)$.

The two spectral sequences associated to the double complex $(Hom(F_{-p},C_q))_{p,q \in \mathbb{Z}}$ converge to this homology :
$$\left. \begin{matrix}
{}_{I}\!E^2_{p,q} & = & H^{-p}(G,H_q(K_*)) \\
{}_{II}\!E^1_{p,q} & = & H^{-p}(G,K_q)
\end{matrix} \right\} \Longrightarrow H_{p+q}(G,K_*).$$ 
The first spectral sequence is called the Hochschild-Serre spectral sequence associated to the group $G$ and the $G$-complex $K_*$. Notice that, in particular, since the group cohomology with coefficients in a module does not depend on the considered projective resolution (of $\mathbb{Z}$ by $\mathbb{Z}[G]$-modules or of $\mathbb{Z}_2$ by $\mathbb{Z}_2[G]$-modules), neither do the group homology with coefficients in a $G$-complex of $\mathcal{D}^G$ and the functor $\mathcal{D}^G \rightarrow H o \, \mathcal{D}_-~;~K_* \mapsto L_*(K_*)$ (also denoted by $L$ for convenience).

The Hochschild-Serre spectral sequence also allows to prove that $L$ preserves quasi-isomorphisms~:

\begin{prop} An equivariant quasi-isomorphism $f : K_* \rightarrow M_*$ induces an isomorphism $H_*(G,K_*) \rightarrow H_*(G,M_*)$.
\end{prop}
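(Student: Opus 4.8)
The plan is to use the Hochschild--Serre spectral sequence introduced just above, which is the first of the two spectral sequences attached to the double complex $(\Hom_G(F_{-p},K_q))_{p,q}$ and which has $E^2$-page ${}_{I}\!E^2_{p,q} = H^{-p}(G,H_q(K_*))$ converging to $H_{p+q}(G,K_*)$. An equivariant chain map $f : K_* \to M_*$ is in particular a morphism of $G$-complexes, so by naturality of the double complex $(\Hom_G(F_{-p},-_q))$ it induces a morphism of the associated first spectral sequences, compatible with the filtrations and converging to the induced map $H_*(G,K_*) \to H_*(G,M_*)$.

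First I would record that a morphism of $G$-complexes $f$ induces, for each $q$, a morphism of $\mathbb{Z}_2[G]$-modules $H_q(f) : H_q(K_*) \to H_q(M_*)$, and that this is functorial. Since $f$ is assumed to be a quasi-isomorphism, each $H_q(f)$ is in fact an isomorphism of $\mathbb{Z}_2[G]$-modules. Next I would invoke functoriality of group cohomology in the coefficient module: an isomorphism of $\mathbb{Z}_2[G]$-modules $H_q(K_*) \xrightarrow{\sim} H_q(M_*)$ yields an isomorphism $H^{-p}(G, H_q(K_*)) \xrightarrow{\sim} H^{-p}(G, H_q(M_*))$ for every $p$. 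Therefore the map of Hochschild--Serre spectral sequences induced by $f$ is an isomorphism on the whole $E^2$-page.

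From an isomorphism on $E^2$ one gets an isomorphism on $E^r$ for all $r \geq 2$, by induction on $r$ (the differentials are natural, so the five lemma applied to the short exact sequences computing $E^{r+1}$ from $E^r$ does the job), and hence an isomorphism on $E^\infty$. To conclude that the abutment map $H_n(G,K_*) \to H_n(G,M_*)$ is an isomorphism I would appeal to a convergence/comparison argument for the two filtered objects. Here the one point requiring care is that $L_*(K_*)$ is only bounded \emph{above} and the filtration on $H_n(G,K_*)$ coming from $\Hom_G(F_{-p},K_q)$ is a priori infinite; so I would observe that for fixed total degree $n = p+q$ the relevant contributions are controlled -- the $K$-direction is bounded (as $K_* \in \mathcal{D}^G$, i.e.\ bounded) and the cohomological degree $-p$ ranges over nonnegative integers -- which makes the filtration on each $H_n(G,K_*)$ bounded below and exhaustive, so that the standard Eilenberg--Moore / ``isomorphism on $E^\infty$ plus convergence implies isomorphism on abutment'' comparison theorem for (first-quadrant-type) spectral sequences applies.

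The main obstacle I expect is precisely this convergence bookkeeping: the functor $L$ produces complexes unbounded in the negative direction, so one cannot blindly cite the classical comparison theorem for bounded double complexes, and one must check that for each fixed $n$ only finitely many diagonal boxes matter, or equivalently that the filtration is regular enough for the comparison theorem to hold. Once that is pinned down, the rest is a formal naturality-plus-five-lemma argument, and the statement follows.
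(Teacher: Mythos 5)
Your proposal is correct and follows essentially the same route as the paper, which simply observes that the equivariant quasi-isomorphism induces an isomorphism on the $E^2$-pages of the two Hochschild--Serre spectral sequences and then invokes their convergence. Your extra care about convergence is well placed and resolves correctly: since $K_*$ is bounded and $p\leq 0$, for each fixed total degree $n=p+q$ only finitely many terms $\Hom_G(F_{-p},K_q)$ contribute, so the column filtration on $H_n(G,K_*)$ is finite and the standard comparison theorem applies.
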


\begin{proof} The equivariant quasi-isomorphism $f$ induces an isomorphism from the level ${}_{I}\!E^2$ of the induced Hochschild-Serre spectral sequences, which converge to the homologies of $G$ with coefficients in $K_*$ and $M_*$ respectively. 
\end{proof}

We also denote by $L$ the induced functor $H o \, \mathcal{D}^G \rightarrow H o \, \mathcal{D}_-$.
\\

Furthermore, we show that if we apply the functor $L$ to a complex of $\mathcal{C}^G$, we can obtain a filtered complex, and that this operation preserves filtered quasi-isomorphisms.
\\

Let $\mathcal{C}_-$ denote the category of bounded above complexes of $\mathbb{Z}_2$-vector spaces equipped with an increasing bounded filtration, and morphisms of filtered complexes.

\begin{prop}  Let $(K_*,J)$ be in $\mathcal{C}^G$. The equivariant increasing bounded filtration $J$ of the $G$-complex $K_*$ induces an increasing bounded filtration $\mathcal{J}$ on the complex $L_*(K_*)$, defined by 
$$\mathcal{J}_{\alpha}L_k(K_*) := L_k(J_{\alpha} K_*).$$

In $H o \, \mathcal{C}_-$, the couple $(L_*(K_*), \mathcal{J})$ is independent from the chosen projective resolution.
\end{prop}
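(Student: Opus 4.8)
The plan is to show that two projective resolutions of $\mathbb{Z}$ by $\mathbb{Z}[G]$-modules give filtered complexes that are canonically filtered quasi-isomorphic, i.e. isomorphic in $H \circ \mathcal{C}_-$, and that this identification is natural. First I would recall the standard fact that any two projective resolutions $F_*$ and $F'_*$ of $\mathbb{Z}$ are chain homotopy equivalent over $\mathbb{Z}[G]$, via equivariant chain maps $\varphi : F_* \to F'_*$ and $\psi : F'_* \to F_*$ with $\psi \varphi \simeq \mathrm{id}$ and $\varphi \psi \simeq \mathrm{id}$ through $G$-equivariant homotopies, all of this being unique up to equivariant homotopy. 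The point is then to transport this to the level of the functor $L$: applying $\Hom_G(-, K_*)$ and taking total complexes, the maps $\varphi, \psi$ induce morphisms of double complexes, hence morphisms of the associated total complexes $L_*^F(K_*) \rightleftarrows L_*^{F'}(K_*)$, and the equivariant homotopies induce homotopies between the two composites and the identities. So $L_*^F(K_*)$ and $L_*^{F'}(K_*)$ are chain homotopy equivalent, in particular quasi-isomorphic.

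Next I would check that these comparison maps are \emph{filtered} morphisms and that the homotopies are compatible with the filtration $\mathcal{J}$. This is the crux of the argument, but it is essentially formal once the definition $\mathcal{J}_\alpha L_k(K_*) = L_k(J_\alpha K_*)$ is unwound: since $\varphi$ and $\psi$ are applied only in the resolution variable and $J$ filters the $K_*$-variable, the induced maps on $\Hom_G(F_{-p}, J_\alpha K_q)$ land in $\Hom_G(F'_{-p}, J_\alpha K_q)$, so they preserve $\mathcal{J}$; likewise the homotopies preserve $\mathcal{J}$. Therefore the comparison maps are morphisms in $\mathcal{C}_-$ and are filtered homotopy equivalences, hence filtered quasi-isomorphisms, so they become isomorphisms in $H \circ \mathcal{C}_-$. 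One should note that boundedness of the filtration $\mathcal{J}$ on $L_*(K_*)$, already established in the previous proposition, is preserved, and that $L_*(K_*)$ is bounded above because $K_*$ is bounded and the $F_{-p}$ contribute only in nonpositive degrees $p$.

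Finally I would address canonicity: the isomorphism class in $H \circ \mathcal{C}_-$ is independent of the chosen resolution because any two choices of comparison maps $\varphi$ differ by an equivariant homotopy, which again descends to a filtered homotopy between the induced maps on the $L$-complexes; hence the induced isomorphism in $H \circ \mathcal{C}_-$ is the same. If one wants full naturality, one observes that for a third resolution $F''_*$ the triangle of comparison maps commutes up to equivariant homotopy, so the transition isomorphisms in $H \circ \mathcal{C}_-$ satisfy the cocycle condition, giving a well-defined object $(L_*(K_*), \mathcal{J})$ in $H \circ \mathcal{C}_-$. The main obstacle is simply bookkeeping: one has to be careful that passing from single complexes to double complexes and then to total complexes does not disturb either the filtration (which only touches the $q$-direction) or the homotopies, and that all sign/indexing conventions for the total complex of $(\Hom_G(F_{-p}, K_q))_{p,q}$ are handled consistently; none of this is deep, but it is where an error could creep in.
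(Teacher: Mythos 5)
Your proof is correct, but it takes a genuinely different route from the paper's. The paper does not invoke the comparison theorem for projective resolutions at all: it directly computes the $E^0$ term of the spectral sequence attached to the filtration $\mathcal{J}$, using the projectivity of the $F_i$ to commute $\Hom_G(F_{-a},-)$ with the quotients $J_pK_b/J_{p-1}K_b$, and finds $E^0_{p,*}=L_{p+*}\bigl(J_pK_*/J_{p-1}K_*\bigr)$; the $E^1$ term is then the group homology of $G$ with coefficients in the graded pieces of $J$, an invariant that is independent of the resolution, whence the isomorphism of the two $E^1$ pages. Your argument instead lifts the chain homotopy equivalence between the two resolutions to explicit filtered morphisms of the total complexes, checks that the homotopies act only in the resolution variable and hence preserve $\mathcal{J}$, and concludes that one has filtered homotopy equivalences, therefore filtered quasi-isomorphisms. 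What your approach buys is an explicit morphism realizing the isomorphism in $H\circ\mathcal{C}_-$ (the paper leaves this map implicit) together with the canonicity statement, since comparison maps are unique up to equivariant homotopy; what the paper's approach buys is the computation of $E^0$ and $E^1$ itself, which is immediately reused in the next proposition to show that $L$ preserves filtered quasi-isomorphisms. One cosmetic slip: precomposition with $\varphi:F_*\to F'_*$ sends $\Hom_G(F'_{-p},J_\alpha K_q)$ to $\Hom_G(F_{-p},J_\alpha K_q)$, i.e.\ the induced map is contravariant in the resolution variable, so the directions in your second paragraph should be swapped; this does not affect the substance.
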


\begin{proof} We show that two projective resolutions induce quasi-isomorphic filtered complexes in~$\mathcal{C}_-$.

Let $(F_i)_i$ and $(F'_j)_j$ be two resolutions of $\mathbb{Z}$ or $\mathbb{Z}_2$ by projective $\mathbb{Z}[G]$-modules, respectively $\mathbb{Z}_2[G]$-modules. We denote by $\mathcal{J} L_*(K_*)$ and $\mathcal{J}' L'_*(K_*)$ the respectively associated filtered complexes, and by $E^r$ and $E'^r$ the respectively induced spectral sequences. 

We have
$$E^0_{p,q} = \frac{\mathcal{J}_p L_{p+q}}{\mathcal{J}_{p-1} L_{p+q}} = \frac{\bigoplus_{a+b = p+q} \Hom_G(F_{-a},J_p K_b)}{\bigoplus_{a+b = p+q} \Hom_G(F_{-a},J_{p-1} K_b)} = \bigoplus_{a+b = p+q} \Hom_G \left(F_{-a}, \frac{J_p K_b}{J_{p-1} K_b}\right)$$
because the $G$-modules $F_i$ are projective. Then we have $E^0_{p,*} = L_{p+*}\left(\frac{J_p K_*}{J_{p-1} K_*}\right)$ and $E^{'0}_{p,*} = L'_{p+*}\left(\frac{J_p K_*}{J_{p-1} K_*}\right)$ for all $p \in \mathbb{Z}$. The homologies of these two complexes give the homology of the group~$G$ with coefficients in the complex $\frac{J_p K_*}{J_{p-1} K_*}$, inducing an isomorphism between $E^1$ et $E'^{1}$.

\end{proof}

We use this computation to show that the functor $L$ preserves filtered quasi-isomorphisms~:

\begin{prop} The operation $L$ induces a functor
$$H o \, \mathcal{C}^G \rightarrow H o \, \mathcal{C}_{-}~;~(K_*,J) \mapsto (L_*(K_*), \mathcal{J}),$$
that we also denote by $L$.
\end{prop}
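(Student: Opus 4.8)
The plan is to verify that the assignment $(K_*,J)\mapsto (L_*(K_*),\mathcal{J})$ is well-defined on the localized categories, i.e.\ that it sends equivariant filtered quasi-isomorphisms in $\mathcal{C}^G$ to filtered quasi-isomorphisms in $\mathcal{C}_-$. Since $L$ is already functorial on the level of $\mathcal{C}^G$ (definition and proposition \ref{fonct_L}), and the previous proposition shows independence of the chosen projective resolution up to filtered quasi-isomorphism, the only remaining point is the compatibility with the localizing classes of morphisms; once that is established, the universal property of localization produces the induced functor $H\circ\mathcal{C}^G\to H\circ\mathcal{C}_-$ automatically.

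So let $f\colon (K_*,J)\to (M_*,J')$ be an equivariant filtered quasi-isomorphism, meaning $f$ induces an equivariant isomorphism on the $E^1$-terms of the spectral sequences associated to the filtrations $J$ and $J'$. First I would recall, from the computation in the proof of the preceding proposition, that for the spectral sequence $E^r$ attached to $(L_*(K_*),\mathcal{J})$ one has
$$E^0_{p,*} = L_{p+*}\!\left(\frac{J_p K_*}{J_{p-1} K_*}\right),$$
and similarly for $M_*$ with the filtration $J'$. The morphism $f$ respects filtrations, hence induces for each $p$ an equivariant chain map $\tfrac{J_p K_*}{J_{p-1}K_*}\to \tfrac{J'_p M_*}{J'_{p-1}M_*}$, which by hypothesis is an equivariant quasi-isomorphism (it is exactly $f$ on $E^1_{p,*}$ of the filtration spectral sequences of $K_*$ and $M_*$). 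Now apply the earlier proposition stating that $L$ preserves equivariant quasi-isomorphisms: it yields an isomorphism $H_*\!\left(G,\tfrac{J_p K_*}{J_{p-1}K_*}\right)\to H_*\!\left(G,\tfrac{J'_p M_*}{J'_{p-1}M_*}\right)$, that is, an isomorphism $E^1_{p,*}\to E'^{1}_{p,*}$ for every $p$. This is precisely the statement that $L_*(f)$ is a filtered quasi-isomorphism in $\mathcal{C}_-$, so $L_*(f)$ becomes an isomorphism in $H\circ\mathcal{C}_-$.

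It remains to check the boundedness requirement built into the category $\mathcal{C}_-$: the filtration $\mathcal{J}$ is bounded because $J$ is bounded and $\mathcal{J}_\alpha L_k(K_*)=L_k(J_\alpha K_*)$, so $\mathcal{J}_\alpha L_k = L_k(K_*)$ for $\alpha$ large and $\mathcal{J}_\alpha L_k = 0$ for $\alpha$ small; and $L_*(K_*)$ is bounded above since $K_*$ is bounded (only finitely many $C_q$ are nonzero) while the Hom-part contributes degrees $-p\le 0$. I expect the main (only mild) obstacle to be purely bookkeeping: making sure the indexing conventions for the double complex $(\Hom_G(F_{-p},C_q))$ and for the induced filtration line up so that the identification $E^0_{p,*}=L_{p+*}\!\big(J_pK_*/J_{p-1}K_*\big)$ is literally correct, and that $\mathcal{J}$ is indeed a filtration by subcomplexes (which follows from exactness of $\Hom_G(F_{-p},-)$ on the projective modules $F_{-p}$, already used above). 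With those conventions fixed, the argument is the one given.
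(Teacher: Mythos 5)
Your proof is correct and follows essentially the same route as the paper: it reuses the computation $E^0_{p,*}=L_{p+*}\bigl(J_pK_*/J_{p-1}K_*\bigr)$ from the preceding proposition, observes that a filtered quasi-isomorphism induces equivariant quasi-isomorphisms on the graded pieces, and concludes by the fact that $L$ preserves equivariant quasi-isomorphisms. The extra boundedness check you add at the end is a harmless supplement the paper leaves implicit.
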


\begin{proof} Let $\varphi : K_* \rightarrow M_*$ be a quasi-isomorphism in $\mathcal{C}^G$ and $J$ and $I$ be the respective filtrations of $K_*$ and $M_*$. We have 
$$E^{0~(L_*(K_*))}_{p,q} = L_{p+q}\left(E^{0~(K_*)}_{p,*-p}\right) \mbox{ and } E^{0~(L_*(M_*))}_{p,q} = L_{p+q}\left(E^{0~(M_*)}_{p,*-p}\right).$$ 
But, for all $p \in \mathbb{Z}$, the morphism $\varphi : K_* \rightarrow M_*$ induces an equivariant quasi-isomorphism $E^{0~(K_*)}_{p,*-p} \rightarrow E^{0~(M_*)}_{p,*-p}$, and then we use the fact that $L$ preserves quasi-isomorphisms to conclude. 
\end{proof}

We then apply $L$ to the canonical filtration. The induced spectral sequence coincides with the Hochschild-Serre spectral sequence : 

\begin{lem} \label{lem_spec_seq_fil_can_equiv} Let $(K_*,\partial_*)$ be a $G$-complex equipped with the canonical filtration $F^{can}$. We consider the induced filtered complex $\mathcal{F}^{can} L_*(K_*)$, and denote by $E$ the induced spectral sequence. 

Then we have, for all $p,q \in \mathbb{Z}$, 
$$E^1_{p,q} = H^{-2p-q}(G,H_{-p}(K_*)),$$
and, for all $r \geq 1$,
$$E^r_{p,q} = {}_{I}\!E^{r+1}_{2p+q ,-p}.$$
\end{lem}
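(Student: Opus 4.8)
The statement compares two spectral sequences attached to the same double complex, but indexed by two different filtrations: on one side we filter $L_*(K_*)$ by the canonical filtration $\mathcal{F}^{can}$ of the complex $K_*$ transported through $L$; on the other side, the indexing ${}_{I}\!E$ refers to the Hochschild--Serre spectral sequence of the double complex $(\Hom_G(F_{-p},K_q))$, i.e.\ the one computing first the homology of $K_*$ and then the group cohomology. The plan is to identify the $\mathcal{F}^{can}L$-filtration with a suitable shift/reindexing of the second (column) filtration of that very double complex. Concretely, I would first unwind $E^0$ for the canonical filtration exactly as in the preceding proposition, obtaining $E^0_{p,q} = L_{p+q}\bigl(\mathcal{F}^{can}_p K_*/\mathcal{F}^{can}_{p-1} K_*\bigr)$; since $\mathcal{F}^{can}_p K_*/\mathcal{F}^{can}_{p-1} K_*$ is the complex with $H_p(K_*)$ concentrated in degree $p$ and zero differential, the homology of $L$ applied to it is just $H^{-2p-q}(G, H_{-p}(K_*))$ once the total-complex degree conventions ($L_k = \bigoplus_{a+b=k}\Hom_G(F_{-a},-)$, so $\Hom_G(F_{-a},\cdot)$ sitting over a module in degree $p$ contributes in total degree $a+p$) are carefully matched. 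That yields the asserted formula for $E^1_{p,q}$.

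\textbf{The comparison of higher pages.} For the second assertion I would argue that the two filtered complexes $\mathcal{F}^{can}L_*(K_*)$ and the column-filtered double complex $(\Hom_G(F_{-p},K_q))$ define, up to the linear reindexing $r \mapsto r+1$ and $(p,q)\mapsto(2p+q,-p)$, the \emph{same} spectral sequence from page $1$ (equivalently $2$ for ${}_I\!E$) onwards, because both arise from the same bigraded object by the same pair of differentials, merely grouped by different total-degree lines. The cleanest way is to exhibit an explicit isomorphism of the two associated graded pieces together with the differentials: I would check that the canonical filtration's $E^1$-differential $d^1$ coincides, under the reindexing, with the differential ${}_I\!d^2$ of Hochschild--Serre (both being the connecting map induced by the differential of $K_*$ on the group-cohomology groups $H^*(G,H_*(K_*))$), and then invoke the standard fact that a spectral sequence from page $s$ on is determined by $(E^s, d^s)$ together with the ambient bigraded structure. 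Alternatively, and perhaps more robustly, one identifies both with the spectral sequence of the \emph{canonical double-complex filtration by the $K$-degree}, after noting that $\mathcal{F}^{can}$ on $K_*$ induces on the double complex a filtration whose associated graded computes, column by column, the group cohomology of the homology of $K_*$.

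\textbf{Main obstacle.} The genuine difficulty is purely bookkeeping: $L$ is built from a \emph{cochain} resolution placed in negative homological degrees (the index $-p$ in $F_{-p}$), so its total complex lives in bounded-above degrees, and the canonical filtration is a filtration \emph{of the complex $K_*$} pushed forward, not a filtration of the double complex by one of its two obvious degrees. Getting the three simultaneous reindexings — the homological-vs-cohomological flip from $F_{-p}$, the shift in the page number $r\mapsto r+1$, and the affine change $(p,q)\mapsto(2p+q,-p)$ — to line up consistently, and in particular checking that the canonical filtration's $E^0$ has zero differential so that $E^1$ already sees the group cohomology (which forces the one-step page shift relative to ${}_I\!E$), is where all the care is needed. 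Once the degree conventions from the previous two propositions are fixed and reused verbatim, the argument is a direct verification; I would present it as: (i) compute $E^0$ and its (vanishing) differential, (ii) read off $E^1$ as stated, (iii) match $E^1$ with ${}_I\!E^2$ and $d^1$ with ${}_I\!d^2$, (iv) conclude by uniqueness of the spectral sequence determined by a page and its differential.
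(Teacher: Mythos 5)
Your outline has the same shape as the paper's proof, but the step on which your $E^1$ computation rests is false as stated: the graded piece $F^{can}_p K_*/F^{can}_{p-1}K_*$ of the canonical filtration is \emph{not} $H_{-p}(K_*)$ concentrated in a single degree with zero differential. It is the two-term complex $K_{-p+1}/\ker\,\partial_{-p+1}\rightarrow \ker\,\partial_{-p}$ (sitting in degrees $-p+1$ and $-p$), which is only \emph{quasi-isomorphic} to $H_{-p}(K_*)$ placed in degree $-p$; correspondingly the differential of $E^0_{p,*-p}$ does not vanish. This is precisely the point the paper's proof is built around: it identifies $E^0_{p,*-p}$ with the mapping cone of $\phi:\Hom_G(F_{-(p+*)},K_{-p+1}/\ker\,\partial_{-p+1})\rightarrow\Hom_G(F_{-(p+*)},\ker\,\partial_{-p})$, compares the long exact sequence of that cone with the one coming from the short exact sequence ending in $\Hom_G(F_{-(p+*)},H_{-p}(K_*))$ (exact because the $F_i$ are projective), and concludes by the five lemma. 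Your computation can be repaired more cheaply by invoking the earlier proposition that $L$ preserves equivariant quasi-isomorphisms, applied to the quasi-isomorphism from the graded piece onto its homology; but the justification you actually give (``the graded piece is concentrated, $E^0$ has zero differential'') is wrong, and hides the only nontrivial content of the lemma.

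For the second assertion, your step (iv) appeals to ``the standard fact that a spectral sequence from page $s$ on is determined by $(E^s,d^s)$''; this is not a fact. Knowing $(E^s,d^s)$ determines $E^{s+1}$ as a bigraded object but says nothing about $d^{s+1}$, so matching $E^1$ with ${}_{I}\!E^2$ and $d^1$ with ${}_{I}\!d^2$ does not by itself propagate to all $r$. One needs either a map of filtered complexes inducing an isomorphism on some page, or the argument the paper sketches: the isomorphism $E^1_{p,q}\cong{}_{I}\!E^2_{2p+q,-p}$ is natural, both spectral sequences live on the same total complex $L_*(K_*)$, and the differentials of \emph{every} page of each are induced by the same morphisms, so the identification is one of spectral sequences and not merely of one page. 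Your alternative suggestion (identifying both with a single filtration of the double complex) points in the right direction but would need to be carried out; as written, the proposal leaves the comparison of pages $r\geq 2$ unjustified.
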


\begin{proof} Let $(F_*,\Delta_*)$ be a projective resolution over $\mathbb{Z}[G]$ of $\mathbb{Z}$. Let $p \in \mathbb{Z}$.

By a direct computation and using the fact that the $F_i$'s are projective $\mathbb{Z}[G]$-modules, one can see that the complex $E^0_{p,*-p}$ is the mapping cone of the morphism
$$\phi : \Hom_G\left(F_{-(p+*)}, K_{-p+1} / ker \, \partial_{-p+1}\right) \rightarrow \Hom_G\left(F_{-(p+*)}, ker \, \partial_{-p}\right).$$
On the other hand, because the $F_i$'s are projective $\mathbb{Z}[G]$-modules, the short sequence of complexes
$$0 \rightarrow \Hom_G\left(F_{-(p+*)}, K_{-p+1} / ker \, \partial_{-p+1}\right) \rightarrow \Hom_G\left(F_{-(p+*)}, ker \, \partial_{-p}\right) \rightarrow \Hom_G \left(F_{-(p+*)}, H_{-p}(K_*) \right) \rightarrow 0$$
is exact.

Considering now the induced long exact sequences in homology and using the five femma, we obtain that, for all $p,q \in \mathbb{Z}$, 
$$E^1_{p,q} = H_{p+q} \left(E^0_{p,*-p}\right) \cong  H_{p+q} \left(\Hom_G\left(F_{-(p+*)}, H_{-p}(K_*)\right) \right) = H^{-2p -q}(G,H_{-p}(K_*)).$$

The spectral sequence $E$ and the Hochschild-Serre spectral sequence are then naturally isomorphic (modulo reindexing) because of the naturality of the above isomorphism and because the differentials of both spectral sequences are induced by the same morphisms.
\end{proof}

We end this paragraph with an essential point : the functor $L$ commutes with the operation which associates to any cubical diagram in $\mathcal{C}^G$ (resp. $\mathcal{C}_-$) its simple filtered complex. For the definition of a cubical diagram and the associated simple filtered complex, see \cite{MCP} section 1. 

\begin{prop} \label{l_com_s} If $\mathcal{K}$ is a cubical diagram of type $\square_n^+$ in $\mathcal{C}^G$, then $\mathbf{s}(L_*(\mathcal{K})) = L_*(\mathbf{s}\mathcal{K})$ (in~$\mathcal{C}_-$ if we consider the same projective resolution on both sides).
\end{prop}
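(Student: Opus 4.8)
The plan is to unwind both sides of the claimed identity down to the underlying double complexes and check that they agree, using only the explicit formula for $L_*$ from Definition and Proposition \ref{fonct_L} and the definition of the simple filtered complex of a cubical diagram from \cite{MCP}. Fix a projective resolution $\cdots \to F_1 \xrightarrow{\Delta_1} F_0 \to \mathbb{Z} \to 0$ of $\mathbb{Z}$ by $\mathbb{Z}[G]$-modules once and for all, and use it on both sides. The key observation is that both constructions—applying $L_*$ and taking the simple complex $\mathbf{s}$—are instances of forming a total complex of a multi-indexed family of $\mathbb{Z}_2$-vector spaces, and total complexes commute with one another up to the obvious reindexing of the total degree and the matching of signs (here trivial, since everything is over $\mathbb{Z}_2$).

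First I would write out, for a cubical diagram $\mathcal{K}$ of type $\square_n^+$ with vertices $\mathcal{K}^{(i)}_*$ indexed by $i$ in the relevant finite poset, the bigraded pieces of $\mathbf{s}\mathcal{K}$: in degree $k$ one has $\bigoplus_i (\mathcal{K}^{(i)})_{k - |i|}$ (with $|i|$ the appropriate shift attached to the cube vertex $i$), and the filtration $J_\alpha(\mathbf{s}\mathcal{K})_k = \bigoplus_i J_\alpha (\mathcal{K}^{(i)})_{k-|i|}$ is formed vertex-by-vertex. Then I would apply $L_*$: by Definition and Proposition \ref{fonct_L}, $L_k(\mathbf{s}\mathcal{K}) = \bigoplus_{p+q=k} \Hom_G(F_{-p}, (\mathbf{s}\mathcal{K})_q) = \bigoplus_{p+q=k} \bigoplus_i \Hom_G(F_{-p}, (\mathcal{K}^{(i)})_{q-|i|})$, where in the last step I use that $\Hom_G(F_{-p}, -)$ commutes with the finite direct sum over the cube's vertices. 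On the other side, $L_*(\mathcal{K})$ is the cubical diagram whose vertex $i$ is $L_*(\mathcal{K}^{(i)})$, so $\mathbf{s}(L_*(\mathcal{K}))_k = \bigoplus_i (L_*(\mathcal{K}^{(i)}))_{k-|i|} = \bigoplus_i \bigoplus_{p+q=k-|i|} \Hom_G(F_{-p}, (\mathcal{K}^{(i)})_q)$. Reindexing $q \mapsto q - |i|$ in the second sum identifies this with the expression above degree-by-degree, and the same bookkeeping shows the filtrations $\mathcal{J}$ match, since both are $\bigoplus \Hom_G(F_{-p}, J_\alpha(-))$ assembled over vertices.

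It then remains to check that the differentials coincide. The differential on $\mathbf{s}(L_*(\mathcal{K}))$ has two contributions: the internal differential of each $L_*(\mathcal{K}^{(i)})$—itself the sum of the $\mathcal{K}^{(i)}$-differential and the $\Hom_G(\Delta_*, -)$ part—and the cube-face maps of the diagram $L_*(\mathcal{K})$, which are $\Hom_G(F_{-p}, f)$ for $f$ a face map of $\mathcal{K}$. The differential on $L_*(\mathbf{s}\mathcal{K})$ likewise splits into the $\mathbf{s}\mathcal{K}$-differential (= vertex differentials plus face maps of $\mathcal{K}$) pushed through $\Hom_G(F_{-p},-)$, and the $\Hom_G(\Delta_*,-)$ part. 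Since $\Hom_G(F_{-p}, -)$ is additive and functorial, these three families of components—vertex differentials, face maps, resolution differentials—are carried to one another under the reindexing isomorphism, so the differentials agree. I expect the only genuinely delicate point to be the sign/shift bookkeeping in the definition of $\mathbf{s}$ for a $\square_n^+$-diagram and making sure the convention for the total degree is applied consistently on both sides; over $\mathbb{Z}_2$ the signs are vacuous, so this reduces to a careful but routine check that the degree shifts $|i|$ and the homological degree of $F_{-p}$ are added in the same order, which is exactly the associativity of forming iterated total complexes.
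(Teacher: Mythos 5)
Your proposal is correct and is exactly the "direct computation" that the paper's proof consists of: both sides are iterated total complexes of the same multi-indexed family, identified degree-by-degree using the additivity of $\Hom_G(F_{-p},-)$ over the finite direct sum of cube vertices, with matching filtrations and differentials (and no sign issues over $\mathbb{Z}_2$). Nothing further is needed.
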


\begin{proof}  Direct computation.
\end{proof}

\subsection{Equivariant weight complex}

Applying the functor $L$ to the weight complex with action (theorem \ref{th_weight-complex-act}), we obtain the equivariant weight complex which will induce a weight filtration on the equivariant homology we define in definition \ref{hom_equiv} below.

\begin{de} \label{hom_equiv} Let $X$ be a real algebraic $G$-variety. We denote $C_*^G(X) := L_*(C_*(X))$ and, for $n \in \mathbb{Z}$, we associate to $X$ 
$$H_n(X ; G) := H_n(G,C_*(X)) = H_n(C^G_*(X)), $$
its $n^{\text{th}}$ equivariant homology group, where $C_*(X)$ is the $G$-complex of semialgebraic chains with closed supports of the set of real points of $X$.
\end{de}

\begin{rem} 
\begin{itemize} 
	\item The Hochschild-Serre spectral sequence 
	$${}_{I}\!E^2_{p,q} = H^{-p}(G,H_q(X)) \Rightarrow H_{p+q}(X ; G)$$	allows one to understand this equivariant homology from a geometrical point of view : it involves the geometry of the considered real algebraic $G$-variety, the geometry of the action and the geometry of the group $G$ itself.
\item This equivariant homology is the same as J. van Hamel's equivariant homology defined in \cite{VH} Chapter III Definition 1.2 (at least for compact real algebraic $G$-varieties).  
	\item For $G = \{e\}$, $H_n(X;G) = H_n(X)$.
\end{itemize}
\end{rem}

\begin{ex} \label{ex_sphere_act} We use the Hochschild-Serre spectral sequence to compute the equivariant homology of the $2$-dimensional sphere $X$ given by the equation $x^2 + y^2 + z^2 = 1$ in $\mathbb{R}^3$, equipped with the action of $G =~\mathbb{Z}/2\mathbb{Z}$ given by $\sigma : (x,y) \mapsto (-x,y)$.

The page ${}_{I}\!E^2(X)$ is 
$$\begin{matrix}
\ldots & \mathbb{Z}_2[X] & \ldots & \mathbb{Z}_2[X] & \mathbb{Z}_2[X] \\
\ldots & 0 & \dots & 0 & 0 \\ 
\ldots & \mathbb{Z}_2[\{p_0\}] & \ldots & \mathbb{Z}_2[\{p_0\}] & \mathbb{Z}_2[\{p_0\}]
\end{matrix}$$
where $p_0$ is a point of $X$, that we choose in the fixed points set $X^G$. Then, since the differentials of the rows of the double complex inducing the spectral sequence are $1 + \sigma$, we have ${}_{I}\!E^2(X) = {}_{I}\!E^{\infty}(X)$ and 
$$H_k(X; G) = 
\begin{cases}
 \mathbb{Z}_2[X] \text{ if $k = 1$ or $2$,} \\
  \mathbb{Z}_2[X] \oplus  \mathbb{Z}_2[\{p_0\}] \text{ if $k \leq 0$.}
\end{cases}$$

If we consider now the fixed point free $G$-action on $X$ given by $\sigma : (x,y) \mapsto (-x,-y)$, the page ${}_{I}\!E^2(X)$ is the same (with $p_0$ being any point of $X$), but the differentials $d^3$ are not trivial (we have $d^3([\{p_0\}]) = [X]$). Consequently, with respect to this action, the Hochschild-Serre spectral sequence degenerates at page ${}_{I}\!E^4(X)$ :
$$\begin{matrix}
\ldots & 0 & \ldots & 0 & \mathbb{Z}_2[X] & \mathbb{Z}_2[X] & \mathbb{Z}_2[X] \\
\ldots & 0 & \dots & 0 & 0 & 0 & 0 \\ 
\ldots & 0 & \ldots & 0 & 0 & 0 & 0
\end{matrix}$$
and
$$H_k(X; G) = 
\begin{cases}
 \mathbb{Z}_2[X] \text{ if $k = 0,1$ or $2$,} \\
  0 \text{ if $k < 0$.}
\end{cases}$$
\end{ex}

The second spectral sequence 
$${}_{II}\!E^1_{p,q} = H^{-p}(G,C_q(X)) \Rightarrow H_{p+q}(X ; G)$$
associated to the equivariant homology can also be useful, as in the following case :

\begin{lem} \label{lem_formula_hom_equiv} Let $G = \mathbb{Z}/2\mathbb{Z}$. Then, for all real algebraic $G$-varieties $X$ and all $k \in \mathbb{Z}$,
$$H_k(X ; G) =  \left(ker \, \partial_{k}\right)^G/ \partial_{k+1}\left(\left(C_{k+1}(X)\right)^G\right) \oplus \bigoplus_{i \geq k+1} H_i\left(X^G\right).$$
\end{lem}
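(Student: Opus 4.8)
The strategy is to compute the equivariant homology $H_k(X;G)$ by analyzing the second spectral sequence ${}_{II}\!E^1_{p,q} = H^{-p}(G,C_q(X)) \Rightarrow H_{p+q}(X;G)$, exploiting the very explicit description of group cohomology of $G = \mathbb{Z}/2\mathbb{Z}$ with $\mathbb{Z}_2$-coefficients recalled in Example \ref{group_cohom_z2z}: namely $H^0(G,M) = M^G$ and $H^n(G,M) = M^G/(1+\sigma)M$ for $n > 0$. First I would write down the page ${}_{II}\!E^1$: in column $p = 0$ it is $(C_q(X))^G$, and in every column $p < 0$ it is $(C_q(X))^G/(1+\sigma)C_q(X)$, all equipped with differentials induced by $\partial$. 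So the page splits (up to the $p=0$ column) into infinitely many identical copies of the complex $(C_*(X))^G/(1+\sigma)C_*(X)$ sitting in columns $p = -1, -2, \ldots$, plus the complex $(C_*(X))^G$ in column $p = 0$.

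Next I would identify the homology of the complex $(C_*(X))^G/(1+\sigma)C_*(X)$. The key geometric input here is the Smith-type mechanism for an involution: the complex $(1+\sigma)C_*(X) \subset (C_*(X))^G$ fits into short exact sequences relating invariant chains, the image $(1+\sigma)C_*(X)$, and the chains on the fixed locus $X^G$ — this is exactly the (unfiltered) Smith exact sequence underlying Theorem \ref{nash-smith_ex}. The classical Smith theory computation then gives that the homology of $(C_*(X))^G/(1+\sigma)C_*(X)$ is $H_*(X^G)$ (with $\mathbb{Z}_2$-coefficients), since on the fixed set $\sigma$ acts trivially and off it the chains are freely permuted. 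So each column $p \leq -1$ contributes $H_q(X^G)$ at position $(p,q)$.

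Then I would run the convergence/assembly argument. Because $G = \mathbb{Z}/2\mathbb{Z}$ and we are over $\mathbb{Z}_2$, the higher differentials in this spectral sequence should vanish or telescope in a controlled way (the periodicity of the resolution $\cdots \to \mathbb{Z}_2[G] \xrightarrow{1+\sigma} \mathbb{Z}_2[G] \xrightarrow{1+\sigma} \mathbb{Z}_2[G] \to \cdots$ makes the $d^1$ between consecutive negative columns the relevant map, and one checks it is zero on the Smith homology). Collecting contributions on the total degree $p + q = k$ line: the $p = 0$ column contributes $H_k\bigl((C_*(X))^G\bigr) = (\ker\partial_k)^G/\partial((C_{k+1}(X))^G)$, while for each $i \geq k+1$ the column $p = k - i \leq -1$ contributes $H_i(X^G)$. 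Assembling (and checking that the filtration has no nontrivial extension problem, which is automatic over the field $\mathbb{Z}_2$) yields exactly
$$H_k(X;G) = (\ker\partial_k)^G/\partial_d\bigl((C_{k+1}(X))^G\bigr) \oplus \bigoplus_{i \geq k+1} H_i(X^G).$$

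The main obstacle I anticipate is controlling the higher differentials $d^r$ for $r \geq 2$ and the extension problem — i.e. genuinely justifying that the spectral sequence degenerates appropriately so that the $E^\infty$ page is the direct sum of the pieces just identified, rather than only a subquotient. The honest way to handle this is to pin down the $d^1$ differentials between the negative columns using the explicit periodic resolution and the Smith short exact sequence, show the induced map on $H_*(X^G)$-pieces is zero (using that $1+\sigma$ vanishes on invariant chains supported near $X^G$), so that ${}_{II}\!E^2 = {}_{II}\!E^\infty$ already has the stated form; the vanishing of extensions is then free since everything is a $\mathbb{Z}_2$-vector space. A subtlety worth a remark is the boundedness: the sum $\bigoplus_{i \geq k+1} H_i(X^G)$ is finite because $X^G$ has finite dimension, so no convergence issue in total degree $k$ arises even though the spectral sequence extends infinitely to the left.
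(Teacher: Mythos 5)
Your proposal is correct and follows essentially the same route as the paper: compute the second spectral sequence ${}_{II}\!E$, use the Smith decomposition $(C_*(X))^G = C_*(X^G)\oplus(1+\sigma)C_*(X)$ to identify the columns $p\leq -1$ with $C_*(X^G)$ (hence their homology with $H_*(X^G)$), and observe that the sequence degenerates at ${}_{II}\!E^2$ with no extension problem over $\mathbb{Z}_2$. Your discussion of the degeneration (representatives of all $E^2$-classes are invariant cycles, hence killed by both $\partial$ and $1+\sigma$, so they are total cycles) is in fact more explicit than the paper's one-line assertion that ${}_{II}\!d^r=0$ for $r\geq 2$; only the phrase ``the $d^1$ between consecutive negative columns'' is a slip, since in ${}_{II}\!E$ the map between columns is a higher differential, $d^1$ being induced by $\partial$ within each column.
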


\begin{proof} The equivariant homology is the homology of the total complex associated to the double complex
$$\begin{array}{cccccc}
\ \longleftarrow & C_d(X) & \xleftarrow{1 + \sigma} & C_d(X) & \xleftarrow{1 + \sigma} & C_d(X) \\
& \downarrow \partial_d & &  \downarrow \partial_d&  & \downarrow \partial_d \\
\longleftarrow & C_{d-1}(X) & \xleftarrow{1 + \sigma} & C_{d-1}(X) & \xleftarrow{1 + \sigma} & C_{d-1}(X) \\
& \downarrow \partial_{d-1} & &  \downarrow \partial_{d-1} & &\downarrow \partial_{d-1} \\
& \vdots & & \vdots & & \vdots \\
& \downarrow \partial_2 & &  \downarrow \partial_2 & & \downarrow \partial_2 \\
\ \longleftarrow & C_1(X) & \xleftarrow{1 + \sigma} & C_1(X) & \xleftarrow{1 + \sigma} & C_1(X)\\
& \downarrow \partial_1 & &  \downarrow \partial_1 & & \downarrow \partial_1 \\
\longleftarrow & C_{0}(X) & \xleftarrow{1 + \sigma} & C_{0}(X) & \xleftarrow{1 + \sigma} & C_{0}(X) \\
\end{array}$$
where $d$ is the dimension of $X$. Using the Smith short exact sequence 
$$0 \rightarrow C_*\left(X^G\right) \oplus (1+\sigma) C_*(X) \rightarrow C_*(X) \rightarrow (1 + \sigma) C_*(X) \rightarrow 0$$
we compute the level one of the spectral sequence ${}_{II}\!E$ :
$$\begin{array}{ccccccc}
\ \cdots &  & C_d\left(X^G\right) &  & C_d\left(X^G\right)&  & \left(C_d(X)\right)^G \\
 & & \downarrow \partial_d & &  \downarrow \partial_d&  & \downarrow \partial_d \\
\cdots &  & C_{d-1}\left(X^G\right) &  & C_{d-1}\left(X^G\right) & &\left(C_{d-1}(X)\right)^G  \\
& &\downarrow \partial_{d-1} & &  \downarrow \partial_{d-1} & &\downarrow \partial_{d-1} \\
&& \vdots & & \vdots & & \vdots \\
& &\downarrow \partial_2 & &  \downarrow \partial_2 & & \downarrow \partial_2 \\
\ \cdots &  & C_1\left(X^G\right) &  & C_1\left(X^G\right) & &\left(C_1(X)\right)^G \\
& &\downarrow \partial_1 & &  \downarrow \partial_1 & & \downarrow \partial_1 \\
 \cdots & & C_{0}\left(X^G\right) & & C_{0}\left(X^G\right) & &\left(C_{0}(X)\right)^G \\
\end{array}$$

The page ${}_{II}\!E^2$ is 
$$\begin{array}{cccccc}
\ \cdots & H_d\left(X^G\right) &  & H_d\left(X^G\right) &  &  \left(ker \, \partial_d\right)^G \\
& & &  &  & \\
\cdots & H_{d-1}\left(X^G\right) &  & H_{d-1}\left(X^G\right) &  & \left(ker \, \partial_{d-1}\right)^G/ \partial_d\left(\left(C_d(X)\right)^G\right) \\
& &  & & & \\
& \vdots & & \vdots & & \vdots \\
& &  & &  & \\
\ \cdots & H_1\left(X^G\right) &  & H_1\left(X^G\right) &  &  \left(ker \, \partial_{1}\right)^G/ \partial_2\left(\left(C_2(X)\right)^G\right) \\
& &  & & & \\
 \cdots & H_{0}\left(X^G\right) & & H_{0}\left(X^G\right) & &  \left(C_{0}(X)\right)^G /  \partial_1\left(\left(C_1(X)\right)^G\right) \\
\end{array}$$
and the differentials ${}_{II}\!d^r$ for $r \geq 2$ all vanish because any element of ${}_{II}\!E^r_{p,q}$ with $p < 0$ and $0 \leq q \leq d$ can be represented by a cycle. Consequently, the spectral sequence ${}_{II}\!E$ degenerates at level two and, since it converges to the equivariant homology of $X$, we have
$$H_k(X ; G) =  (ker \, \partial_{k})^G/ \partial_d((C_{k+1}(X))^G) \oplus \bigoplus_{i \geq k+1} H_i(X^G)$$
for all $k \in \mathbb{Z}$.
\end{proof}
 
In definition \ref{def_eq_we_co} below, we define the equivariant weight complex. Recall that the weight complex with action has values in the localised category $H o \, \mathcal{C}^G$. As a consequence, we can apply the functor $L : H o \, \mathcal{C}^G \rightarrow H o \, \mathcal{C}_-$.

\begin{de} \label{def_eq_we_co} Let $X$ be a real algebraic $G$-variety. We denote 
$$\Omega C^G_*(X) := L_*(\mathcal{W} C_*(X)) \in H o \, \mathcal{C}_-,$$
and we call this filtered complex the equivariant weight complex of $X$.
\end{de}

For any real algebraic $G$-variety $X$, we denote $\mathcal{F}^{can} C^G_*(X) := L_*(F^{can} C_*(X))$. The equivariant weight complex is the unique acyclic and additive extension to $\mathbf{Sch}_c^G(\mathbb{R})$ of the functor $\mathcal{F}^{can} C^G_* : \mathbf{V}^G(\mathbb{R}) \rightarrow H \circ \mathcal{C}_-$, in the following meaning :

\begin{theo}  \label{equiv_wei_comp} The operation
$$\Omega C^G_* : \mathbf{Sch}_c^G(\mathbb{R}) \rightarrow H o \, \mathcal{C}_-~;~X \mapsto \Omega C^G_*(X)$$
is a functor, which extends the functor 
$$\mathcal{F}^{can} C^G_* : \mathbf{V}^G(\mathbb{R}) \rightarrow H o \, \mathcal{C}_-~;~X \mapsto \mathcal{F}^{can} C^G_*(X),$$
to $\mathbf{Sch}_c^G(\mathbb{R})$ and verifies the following properties :
\begin{enumerate}
	\item Acyclicity : For any acyclic square \eqref{acyclic_square} in $\mathbf{Sch}_c^G(\mathbb{R})$, the simple filtered complex of the $\square_1^{+}$-diagram in $\mathcal{C_-}$
	$$\begin{array}{ccc}
\ \Omega C_{*}^G(\widetilde{Y}) & \rightarrow & \Omega C_{*}^G(\widetilde{X}) \\
  \downarrow &      &  \downarrow  \\
  \Omega C_{*}^G(Y) & \rightarrow & \Omega C_{*}^G(X) 
  \end{array}$$
  is acyclic.
  	\item Additivity : For any equivariant closed inclusion $Y \subset X$, the simple filtered complex of the $\square_0^{+}$-diagram in $\mathcal{C}_-$
	$$\Omega C_{*}^G(Y) \rightarrow \Omega C_{*}^G(X)$$
	is isomorphic to $\Omega C_{*}^G(X \setminus Y)$ in $H o \, \mathcal{C}_-$.
\end{enumerate}
Furthermore, any other functor $\mathbf{Sch}_c^G(\mathbb{R}) \rightarrow H \circ \mathcal{C}_-$ verifying these three properties is isomorphic to $\Omega C^G_*$ in $H o \, \mathcal{C}_-$, via a unique quasi-isomorphism of $\mathcal{C}_-$.
\end{theo}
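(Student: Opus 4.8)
\textbf{Existence and the two properties.} The plan is to deduce everything from Theorem~\ref{th_weight-complex-act} by applying the functor $L$. Since $\mathcal{W} C_* : \mathbf{Sch}_c^G(\mathbb{R}) \to H \circ \mathcal{C}^G$ and $L : H \circ \mathcal{C}^G \to H \circ \mathcal{C}_-$ are functors, I would first note that the composite $\Omega C^G_* = L \circ \mathcal{W} C_*$ is a functor $\mathbf{Sch}_c^G(\mathbb{R}) \to H \circ \mathcal{C}_-$, still $\Phi$-rectified because $L$ is realised by a genuine functor on complexes. As $\mathcal{W} C_*$ restricts on $\mathbf{V}^G(\mathbb{R})$ to $F^{can} C_*$ by Theorem~\ref{th_weight-complex-act}, applying $L$ shows $\Omega C^G_*$ restricts there to $L_*(F^{can} C_*(-)) = \mathcal{F}^{can} C^G_*$, so it is an extension of the latter. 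For acyclicity: given an acyclic square~\eqref{acyclic_square} in $\mathbf{Sch}_c^G(\mathbb{R})$, Theorem~\ref{th_weight-complex-act}(1) tells us the simple filtered complex of the corresponding $\square_1^+$-diagram built from $\mathcal{W} C_*$ is acyclic in $H \circ \mathcal{C}^G$; then, using Proposition~\ref{l_com_s}, $\mathbf{s}(\Omega C^G_*(-)) = \mathbf{s}(L_*(\mathcal{W} C_*(-))) = L_*(\mathbf{s}(\mathcal{W} C_*(-)))$, and since $L$ preserves filtered quasi-isomorphisms (proved above) this is acyclic in $\mathcal{C}_-$. For additivity: given an equivariant closed inclusion $Y \subset X$, Theorem~\ref{th_weight-complex-act}(2) gives $\mathbf{s}(\mathcal{W} C_*(Y) \to \mathcal{W} C_*(X)) \cong \mathcal{W} C_*(X \setminus Y)$ in $H \circ \mathcal{C}^G$, and the same two facts yield $\mathbf{s}(\Omega C^G_*(Y) \to \Omega C^G_*(X)) = L_*(\mathbf{s}(\mathcal{W} C_*(Y) \to \mathcal{W} C_*(X))) \cong L_*(\mathcal{W} C_*(X \setminus Y)) = \Omega C^G_*(X \setminus Y)$ in $H \circ \mathcal{C}_-$.

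\textbf{Uniqueness.} For this I would apply the (covariant, homological) form of the equivariant extension criterion~\ref{crit_action} with target category $\mathcal{C}_-$. First I would record that $\mathcal{C}_-$ --- bounded-above filtered complexes of $\mathbb{Z}_2$-vector spaces, localised at filtered quasi-isomorphisms --- is a category of (co)homological descent, with $\mathbf{s}$ as simple functor; this is a formal check, parallel to the one for the bounded category used by McCrory and Parusi\'nski, the point being that $\mathbf{s}$ of a $\square_n^+$-diagram is a finite total complex, hence stays bounded above. Then I would verify that $\mathcal{F}^{can} C^G_* : \mathbf{V}^G(\mathbb{R}) \to H \circ \mathcal{C}_-$ is $\Phi$-rectified and satisfies (F1)--(F2): (F1) because $L$ sends $0$ to $0$ and commutes with finite direct sums (immediate from the double complex of Definition and Proposition~\ref{fonct_L}), so $\mathcal{F}^{can} C^G_*$ inherits it from $F^{can} C_*$; (F2) because for an elementary acyclic square $X_\bullet$ in $\mathbf{V}^G(\mathbb{R})$ one has $\mathbf{s}(F^{can} C_*(X_\bullet))$ acyclic in $\mathcal{C}^G$ (this is exactly what makes $F^{can} C_*$ extend to $\mathcal{W} C_*$ in \cite{Pri-CA}), whence by Proposition~\ref{l_com_s} the complex $\mathbf{s}(\mathcal{F}^{can} C^G_*(X_\bullet)) = L_*(\mathbf{s}(F^{can} C_*(X_\bullet)))$ is acyclic. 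The criterion then produces an extension of $\mathcal{F}^{can} C^G_*$ to $\mathbf{Sch}_c^G(\mathbb{R})$ satisfying acyclicity and additivity, unique up to a unique isomorphism in $H \circ \mathcal{C}_-$. By the Existence part $\Omega C^G_*$ is such an extension; any functor $\mathbf{Sch}_c^G(\mathbb{R}) \to H \circ \mathcal{C}_-$ with the three listed properties is likewise such an extension; hence both are uniquely isomorphic to the functor produced by the criterion, and therefore to each other, by a unique quasi-isomorphism in $\mathcal{C}_-$.

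\textbf{The main obstacle.} Transporting acyclicity and additivity is essentially mechanical once Proposition~\ref{l_com_s} and the invariance of $L$ under filtered quasi-isomorphisms are in hand. The delicate part will be uniqueness: since $L$ is far from faithful, uniqueness of $\Omega C^G_*$ downstairs cannot be read off the uniqueness of $\mathcal{W} C_*$ in $H \circ \mathcal{C}^G$, and one has to re-run the extension criterion directly in the target $H \circ \mathcal{C}_-$. Making that legitimate --- checking that bounded-above filtered $\mathbb{Z}_2$-complexes form a category of (co)homological descent, that $\mathcal{F}^{can} C^G_*$ is $\Phi$-rectified and satisfies (F1)--(F2) there --- is where care is needed, even though no individual step is deep.
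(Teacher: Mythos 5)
Your proposal is correct and follows essentially the same route as the paper: existence and the two properties are obtained by composing ${}^G\!\mathcal{W} C_*$ with $L$ and using Proposition~\ref{l_com_s} together with the preservation of filtered quasi-isomorphisms, while uniqueness is obtained by re-running the equivariant extension criterion~\ref{crit_action} in the target $\mathcal{C}_-$ after checking descent, $\Phi$-rectification and (F1)--(F2) for $\mathcal{F}^{can} C^G_*$. The only cosmetic difference is that the paper cites \cite{GNA} Propri\'et\'e (1.7.5) for $\mathcal{C}_-$ being a category of homological descent rather than sketching the verification.
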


\begin{proof} The equivariant weight complex $\Omega C^G_*$ is the composition of the weight complex with action ${}^G\!\mathcal{W} C_*$ with the functor $L : H o \, \mathcal{C}^G \rightarrow H o \, \mathcal{C}_-$. In particular, it is an extension to $\mathbf{Sch}_c^G(\mathbb{R})$ of the composition of the functors $F^{can} C_{*} : \mathbf{V}^G(\mathbb{R}) \longrightarrow H o \, \mathcal{C}^G$ and $L : H o \, \mathcal{C}^G \rightarrow H o \, \mathcal{C}_-$. 

Moreover, it verifies the acyclicity and additivity properties because so does the weight complex with action (theorem \ref{th_weight-complex-act}) and because the functor $L$ commutes with the operation $\mathbf{s}$ (proposition \ref{l_com_s}).
\\

Now, we show the uniqueness of the equivariant weight complex with respect to these properties, using the theorem \ref{crit_action} applied to the category $\mathcal{C}_-$ and the functor $\mathcal{F}^{can} C^G_* : \mathbf{V}^G(\mathbb{R}) \rightarrow H o \, \mathcal{C}_-$.

Indeed, 
\begin{itemize}
	\item The category $\mathcal{C}_-$ is a category of homological descent (\cite{GNA} Propri\'et\'e (1.7.5)). 	
	\item The functor $\mathcal{F}^{can} C^G_* : \mathbf{V}^G(\mathbb{R}) \rightarrow H o \, \mathcal{C}_-$ is $\Phi$-rectified because, fixing a projective resolution of $\mathbb{Z}$ by $\mathbb{Z}[G]$-modules (of $\mathbb{Z}_2$ by $\mathbb{Z}_2[G]$-modules), it is defined on $\mathcal{C}_-$. 
	\item It verifies condition (F1) : the functor $F^{can} C_* : \mathbf{V}^G(\mathbb{R}) \rightarrow H o \, \mathcal{C}^G$ verifies (F1) in $H o \, \mathcal{C}^G$ and, for any bounded $G$-complexes $K_*$ and $M_*$, we have $L_*(K_* \oplus M_*) = L_*(K_*) \oplus L_*(M_*)$.
	\item It verifies condition (F2) :  the functor $F^{can} C_* : \mathbf{V}^G(\mathbb{R}) \rightarrow H o \, \mathcal{C}^G$ verifies (F2) in $H o \, \mathcal{C}^G$ and the functor $L$ commutes with $\mathbf{s}$. 
\end{itemize}
\end{proof}

The homology of the equivariant weight complex of a real algebraic $G$-variety $X$ is the equivariant homology of $X$. Indeed, let us consider the forgetful functor of the filtration $\mathcal{C}_- \rightarrow \mathcal{D}_-$, which induces a functor $\varphi_- : H o \, \mathcal{C}_- \rightarrow H o \, \mathcal{D}_-$. We have
$$\varphi_- \circ \Omega C^G_* = L \circ ( \varphi^G \circ {}^G\!\mathcal{W} C_*)$$
(where $\varphi^G : H o \, \mathcal{C}^G \rightarrow H o \, \mathcal{D}^G$ is the functor induced by the forgetful functor $\mathcal{C}^G \rightarrow \mathcal{D}^G$). 

Since the functor $\varphi^G \circ {}^G\!\mathcal{W} C_*$ is quasi-isomorphic to $C_*$ in $\mathcal{D}^G$ (see remark \ref{rem_fil_geom_act} and \cite{Pri-CA}, Remarque 3.9), the complex $\varphi_- ( \Omega C^G_*(X)) = L_*( \varphi^G ({}^G\!\mathcal{W} C_*(X)))$ is quasi-isomorphic to $L_*(C_*(X)) = C_*^G(X)$ (the functor $L$ preserves quasi-isomorphisms) and, for all $n \in \mathbb{Z}$,
$$H_n(\Omega C^G_*(X)) = H_n(X ; G).$$

Consequently, the equivariant weight complex induces a filtration on the equivariant homology of real algebraic $G$-varieties, that we call the equivariant weight filtration. We denote it by~$\Omega$.

\subsection{Equivariant weight spectral sequence(s)} \label{subsec_equi_wei_spec}

The equivariant weight complex induces a spectral sequence, that we call the equivariant weight spectral sequence and denote by $\{ {}^G\!E^r, {}^G\!d^r \}$, which converges to the equivariant weight filtration. We reindex it, setting, as in \cite{MCP} section 1.3, 
$$p' = 2p+q,~q' = -p,~r' = r + 1,$$
and we denote by ${}^G\!\widetilde{E}^{r'}_{p',q'}$ the reindexed equivariant weight spectral sequence.

As in the non-equivariant framework, we can read the acyclicity and additivity conditions of the equivariant weight complex on the equivariant weight spectral sequence : for example, if $Y \subset X$ is an equivariant closed inclusion, we have a long exact sequence 
\begin{equation*} \label{long_ex_seq_equiv_spec_seq} ... \rightarrow {}^G\!\widetilde{E}^2_{p,q}(Y) \rightarrow {}^G\!\widetilde{E}^2_{p,q}(X) \rightarrow {}^G\!\widetilde{E}^2_{p,q}(X \setminus Y) \rightarrow {}^G\!\widetilde{E}^2_{p-1,q}(Y) \rightarrow... \tag{3.1} \end{equation*}
for all $q \in \mathbb{Z}$ (the proof runs as in the non-equivariant case : see \cite{MCP} section 1.3). Nevertheless, there are some significative differences between the weight spectral sequence and the equivariant weight spectral sequence. In particular, the equivariant weight spectral sequence is not left bounded (as a consequence, the additivity long exact sequences are not bounded either) and, on compact nonsingular varieties, it does not degenerate at level two in general (proposition \ref{var_comp-non-sing_equiv} and example \ref{ex_sphere_act}).

In proposition \ref{eq_noneq_weight_spec} below, we express the level two of the equivariant weight spectral sequence as the homology of the group $G$ with coefficients in the weight spectral sequence. From this fact, we deduce bounds for the equivariant weight spectral sequence (corollary \ref{equiv_weight_spec_bounds}) and equivariant weight filtration (corollary \ref{bounds_weight_fil}). Furthermore, one can then extract from the equivariant weight spectral sequence two other spectral sequences. We will recover finite long exact sequences of additivity from one of these in section \ref{sect_add_inv}.

\begin{prop} \label{eq_noneq_weight_spec} For all $p,q$ in $\mathbb{Z}$,
$${}^G\!\widetilde{E}^2_{p,q} = H_p\left(G, \widetilde{E}^1_{*,q}\right).$$
\end{prop}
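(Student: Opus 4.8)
The plan is to trace through the definitions and reduce the claim to the computation of $E^1_{p,q}$ for $L$ applied to the canonical filtration, which was carried out in Lemma \ref{lem_spec_seq_fil_can_equiv}, but now applied ``fiberwise'' to the weight filtration rather than to the canonical one. First I would recall the setup: the equivariant weight spectral sequence $\{{}^G\!E^r\}$ is the spectral sequence of the filtered complex $\Omega C^G_*(X) = L_*(\mathcal{W}C_*(X))$, where $\mathcal{W}C_*(X)$ carries the weight filtration $\mathcal{W}$. Its reindexed version is ${}^G\!\widetilde{E}$, while $\widetilde{E}$ denotes the reindexed (non-equivariant) weight spectral sequence of the filtered complex $\mathcal{W}C_*(X)$, whose page $\widetilde{E}^1_{*,q}$ is a complex of $\mathbb{Z}_2[G]$-modules (it carries the $G$-action from Remark \ref{rem_fil_geom_act}), with differential $\widetilde{d}^1$ being $G$-equivariant. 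So $H_p(G, \widetilde{E}^1_{*,q})$ makes sense: take the functor $L^G$ applied to the $G$-complex $\widetilde{E}^1_{*,q}$ and read off its $p$-th homology.

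The key step is the same computation as in the proof of Proposition in the subsection on $L$ and filtered categories: for a filtered $G$-complex $(K_*,J)$, the $E^0$-page of the spectral sequence of $(L_*(K_*),\mathcal{J})$ satisfies $E^0_{p,\ast} = L_{p+\ast}\bigl(J_pK_*/J_{p-1}K_*\bigr)$, so that the $E^1$-page is the group homology of $G$ with coefficients in the associated graded complex $\operatorname{gr}^J_p K_*$. Applying this with $K_* = \mathcal{W}C_*(X)$ and $J = \mathcal{W}$ the weight filtration, the graded piece $\operatorname{gr}^{\mathcal{W}}_p \mathcal{W}C_*(X)$ is precisely the $p$-th column of the $E^0$-page of the (non-equivariant) weight spectral sequence, and its homology is the column $E^1_{p,*}$ of that spectral sequence. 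Hence the $E^1$-page of $\{{}^G\!E^r\}$ is $H^{-\ast}(G, H_\ast(\operatorname{gr}^{\mathcal{W}}_p\mathcal{W}C_*(X)))$-type data, i.e. group homology with coefficients in $E^1_{p,*}$ of the weight spectral sequence; and since ${}^G\!E^1 = H_p(G, E^1_{p,*})$ while the differential ${}^G\!d^1$ is induced by $d^1$ of the weight spectral sequence, taking ${}^G\!d^1$-homology and then passing to the reindexed version $p' = 2p+q$, $q' = -p$ (on both the equivariant and non-equivariant sides) yields ${}^G\!\widetilde{E}^2_{p,q} = H_p(G,\widetilde{E}^1_{*,q})$. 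I would organize this as: (i) identify ${}^G\!E^0$ via the $L$-and-filtration computation; (ii) pass to ${}^G\!E^1$ recognizing the associated graded of $\mathcal{W}C_*$ as the $E^0$-column of the weight spectral sequence; (iii) compute ${}^G\!E^2$ by noting the ${}^G\!d^1$ differential is $L$ applied to the $d^1$ of the weight spectral sequence, hence ${}^G\!E^2_{p,q}$ column is $H(G, E^1_{p,*})$; (iv) translate through the reindexing conventions of \cite{MCP} section 1.3 to land on the stated formula.

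The main obstacle I expect is bookkeeping rather than conceptual: one must carefully match the two reindexings (the one sending $\{{}^G\!E^r\}$ to $\{{}^G\!\widetilde{E}^r\}$ and the one sending $\{E^r\}$ to $\{\widetilde{E}^r\}$ of the weight spectral sequence) so that the group-homology degree $p$ on the right-hand side lines up with the filtration index on the left, and to check that the $G$-action used to form $H_p(G,-)$ is exactly the one induced by functoriality on the weight spectral sequence (Remark \ref{rem_fil_geom_act}), not some shifted or dual action. One should also confirm that the differential ${}^G\!d^1$ on ${}^G\!E^1$ agrees, under the identification of step (ii), with the map induced by $L$ on the $d^1$ of the weight spectral sequence — this follows because $L$ is an exact functor on the relevant complexes (it preserves quasi-isomorphisms, shown above, and here it is literally $\operatorname{Hom}_G(F_{-\bullet},-)$ against a projective resolution) and the differential of a spectral sequence of a double complex is functorial in the input. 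Once these matchings are pinned down, the identity is immediate from the displayed computation of $E^0$ and $E^1$ already established in the previous subsection.
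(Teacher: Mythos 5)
Your opening moves are the right ones and coincide with the paper's proof: since the $F_i$ are projective, $L$ commutes with passing to the graded quotients of the filtration, so ${}^G\!E^0_{p,*-p} = L_*\bigl(E^{0}_{p,*-p}\bigr)$ and hence ${}^G\!E^1_{p,q} = H_{p+q}\bigl(G, E^0_{p,*-p}\bigr)$, the group homology of $G$ with coefficients in the associated graded \emph{complex}. At that point you are already done: the reindexing $r' = r+1$, $p' = 2p+q$, $q' = -p$ sends ${}^G\!E^1$ to ${}^G\!\widetilde{E}^2$, and the complex $\bigl(\widetilde{E}^1_{*,q}, \widetilde{d}^1\bigr)$ on the right-hand side of the statement is nothing but the column complex $\bigl(E^0_{-q,*},d^0\bigr)$ after the same reindexing, so $H_{p+q}\bigl(G, E^0_{-q,*+q}\bigr) = H_p\bigl(G,\widetilde{E}^1_{*,q}\bigr)$. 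The proposition is a statement about the \emph{first} page of the unreindexed equivariant spectral sequence, not the second.

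Your steps (iii)--(iv) therefore contain two genuine errors. First, an off-by-one in the page count: passing to ${}^G\!E^2$ computes ${}^G\!\widetilde{E}^3$, not ${}^G\!\widetilde{E}^2$. Second, and more seriously, the identification ``${}^G\!E^1 = H_p(G,E^1_{p,*})$'', i.e.\ $H^{-*}\bigl(G,H_*(\operatorname{gr}^{\mathcal{W}}_p)\bigr)$, is false in general: group homology with coefficients in a complex is not the group cohomology of its homology; the two are related only by the Hochschild--Serre spectral sequence, which need not degenerate (this very failure is why the paper introduces the two auxiliary spectral sequences ${}^{q}_{I}\!E$ and ${}^{~q}_{II}\!E$ immediately after the proposition). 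Likewise, taking ${}^G\!d^1$-homology of such terms would produce the page ${}^{~q}_{II}\!E^2_{i,j} = H_j\bigl(H^{-i}(G,\widetilde{E}^1_{*,q})\bigr)$, which only \emph{converges} to $H_{i+j}\bigl(G,\widetilde{E}^1_{*,q}\bigr)$ and is not equal to it. Deleting steps (iii)--(iv) and the sentence reinterpreting ${}^G\!E^1$ as ``$H^{-*}(G,H_*(\operatorname{gr}))$-type data'', and instead matching the complex $E^0_{p,*}$ with $\widetilde{E}^1_{*,q}$ under the reindexing, repairs the argument and recovers the paper's proof.
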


\begin{proof} : We have ${}^G\!E^1_{p,q} = H_{p+q}\left({}^G\!E^0_{p, *-p}\right)$ and 
$${}^G\!E^0_{p, *-p} = E^{0~(L_*(\mathcal{W}C_*(X)))}_{p,*-p} = L_*\left(E^{0~(\mathcal{W} C_*(X))}_{p,*-p}\right),$$
then ${}^G\!E^1_{p,q} = H_{p+q}\left(G,E^{0}_{p,*-p}\right)$.
\end{proof}

This allows one to consider the two spectral sequences
$${}^{q}_{I}\!E^2_{\alpha,\beta} = H^{-\alpha}\left(G, \widetilde{E}^2_{\beta, q}\right)$$ 
$${}^{~q}_{II}\!E^1_{\alpha,\beta} = H^{-\alpha}\left(G, \widetilde{E}^1_{\beta, q}\right)$$
which both converge to ${}^G\!\widetilde{E}^2_{\alpha + \beta,q} = H_{\alpha + \beta}\left(G, \widetilde{E}^1_{*,q}\right)$.

\begin{rem} The spectral sequence ${}^{~q}_{II}\!E$ depends on the chosen representative of the weight complex with action at the chain level.
\end{rem}

Let $X$ be a real algebraic $G$-variety of dimension $d$. We take a look at the terms of the $d^{\text{th}}$ row of the reindexed equivariant weight spectral sequence :

\begin{cor} \label{equi_weight_spec_max_dim} For all $p \in \mathbb{Z}$,
$$^G\!\widetilde{E}^2_{p,d} = H^{-p}\left(G, \widetilde{E}^2_{0,d}\right)$$
\end{cor}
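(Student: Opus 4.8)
The plan is to derive this corollary directly from Proposition~\ref{eq_noneq_weight_spec} together with the dimension bound on the non-equivariant weight spectral sequence. First I would recall that for a real algebraic variety $X$ of dimension $d$, McCrory and Parusi\'nski's reindexed weight spectral sequence $\widetilde{E}^1_{p,q}$ vanishes for $q > d$ (the index $q$ records dimension, and there are no semialgebraic chains of dimension exceeding $d$); moreover on the top row $q = d$, the differential $\widetilde{d}^1 : \widetilde{E}^1_{p,d} \to \widetilde{E}^1_{p-1,d}$ arriving from above is zero since its source $\widetilde{E}^1_{p,d+1}$ is zero, so $\widetilde{E}^2_{*,d}$ is simply the kernel of $\widetilde{d}^1$ leaving the top row. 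The key structural point I would use is that this top-row complex $\widetilde{E}^1_{*,d}$, being concentrated in the single degree given by the chain dimension $d$, is acyclic except possibly in one spot, and more precisely that $\widetilde{E}^2_{p,d} = 0$ for $p \neq 0$, with $\widetilde{E}^2_{0,d}$ the only surviving term (this is exactly the vanishing recorded in \cite{MCP} section 1.3 for the top weight row).

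Granting that, the argument is then purely formal. By Proposition~\ref{eq_noneq_weight_spec}, we have ${}^G\!\widetilde{E}^2_{p,d} = H_p(G, \widetilde{E}^1_{*,d})$, the group homology of $G$ with coefficients in the top-row complex. Since that complex is quasi-isomorphic (as a $G$-complex) to the single $G$-module $\widetilde{E}^2_{0,d}$ placed in degree $0$ — because its homology is concentrated there — the functor $L$ preserving quasi-isomorphisms (the proposition just before Lemma~\ref{lem_spec_seq_fil_can_equiv}) gives
$$H_p\left(G, \widetilde{E}^1_{*,d}\right) \;=\; H_p\left(G, \widetilde{E}^2_{0,d}[0]\right).$$
Finally, the homology of $G$ with coefficients in a $G$-module $M$ concentrated in degree $0$ is, by the definition of $L$ via $\Hom_G(F_{-p}, M)$ and the indexing conventions fixed in Definition and Proposition~\ref{fonct_L}, exactly $H^{-p}(G, M)$ (the homology of the complex $\Hom_G(F_\bullet, M)$ reindexed, which computes $\Ext^{-p}_{\mathbb{Z}[G]}(\mathbb{Z}, M) = H^{-p}(G,M)$). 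Applying this with $M = \widetilde{E}^2_{0,d}$ yields ${}^G\!\widetilde{E}^2_{p,d} = H^{-p}\left(G, \widetilde{E}^2_{0,d}\right)$, which is the assertion.

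\textbf{Main obstacle.} I expect the only genuine point requiring care — rather than a routine invocation — is the claim that the top-row complex $\widetilde{E}^1_{*,d}$ has homology concentrated in degree $0$, i.e. that $\widetilde{E}^2_{p,d}$ vanishes for $p \neq 0$. One must check the direction of the differentials and the reindexing $p' = 2p+q$, $q' = -p$, $r' = r+1$ carefully to confirm that on the extremal row corresponding to maximal chain dimension the $E^1$-differential is forced to be surjective (or injective) by dimension reasons, exactly as in the non-equivariant setting of \cite{MCP} section 1.3; this is where the geometry (no chains above dimension $d$, plus the behaviour of $F^{can}$ and the geometric filtration on top-dimensional chains) enters. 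Once that vanishing is in hand, everything else is bookkeeping with the definition of $L$ and the fact that it sends quasi-isomorphisms to quasi-isomorphisms, so the corollary follows immediately from Proposition~\ref{eq_noneq_weight_spec}.
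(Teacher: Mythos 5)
Your argument is correct and is essentially the paper's: both rest on Proposition \ref{eq_noneq_weight_spec} together with the vanishing $\widetilde{E}^2_{\beta,d}=0$ for $\beta\neq 0$ recorded in \cite{MCP} section 1.3. The paper merely packages the final step as the degeneration at page $2$ of the spectral sequence ${}^{d}_{I}\!E^2_{\alpha,\beta}=H^{-\alpha}\bigl(G,\widetilde{E}^2_{\beta,d}\bigr)$ converging to ${}^G\!\widetilde{E}^2_{\alpha+\beta,d}$, which is just another phrasing of your appeal to the quasi-isomorphism invariance of $L$ for a coefficient complex with homology concentrated in degree $0$.
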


\begin{proof} We consider the spectral sequence
$${}^{d}_I\!E^2_{\alpha,\beta} = H^{-\alpha}\left(G,\widetilde{E}^2_{\beta, d}\right) \Rightarrow {}^G\!\widetilde{E}^2_{\alpha + \beta,d},$$
We have $\widetilde{E}^2_{\beta,d} = 0$ if $\beta \neq 0$ (\cite{MCP} section 1.3), therefore the spectral sequence collapses at ${}^{d}_{I}\!E^2$ and 
$$^G\!\widetilde{E}^2_{p,d} = \bigoplus_{\alpha + \beta = p} {}^{d}_I\!E^2_{\alpha,\beta} = H^{-p} \left(G, \widetilde{E}^2_{0,d}\right).$$
\end{proof}

In particular, notice that, in the general case, there is an infinity of non-zero terms in the row $q = d$ of the reindexed equivariant spectral sequence. Even if the reindexed equivariant weight spectral sequence is not left bounded, we have the following bounds :

\begin{cor} \label{equiv_weight_spec_bounds} For all $r \geq 2$, $p,q \in \mathbb{Z}$, if $^G\!\widetilde{E}^r_{p,q} \neq 0$ then $0 \leq q \leq d$ and $p+q \leq d$.
\end{cor}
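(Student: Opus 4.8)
The plan is to combine the bounds already known for the non-equivariant (reindexed) weight spectral sequence with the convergence of the auxiliary spectral sequences ${}^{q}_{I}\!E$ and ${}^{q}_{II}\!E$ built in the previous subsection. Recall from \cite{MCP} section 1.3 that the reindexed non-equivariant weight spectral sequence satisfies $\widetilde{E}^2_{p,q} = 0$ unless $0 \leq q \leq d$ and $p + q \leq d$ (and also $p \leq 0$, but only the stated inequalities are needed here). Since for $r \geq 2$ every term $\widetilde{E}^r_{p,q}$ is a subquotient of $\widetilde{E}^2_{p,q}$, the same vanishing holds for all $\widetilde{E}^1_{p,q}$ as well, after one observes $\widetilde{E}^1_{p,q} = 0$ outside the same region (the page-$1$ terms coming from the geometric filtration of a compactification and its resolutions, which are all of dimension $\leq d$).

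First I would invoke proposition \ref{eq_noneq_weight_spec}, which gives ${}^G\!\widetilde{E}^2_{p,q} = H_p\bigl(G, \widetilde{E}^1_{*,q}\bigr)$. If $q < 0$ or $q > d$, then the whole complex $\widetilde{E}^1_{*,q}$ is zero, hence its group homology vanishes, so ${}^G\!\widetilde{E}^2_{p,q} = 0$ for all $p$; since for $r \geq 2$ the term ${}^G\!\widetilde{E}^r_{p,q}$ is a subquotient of ${}^G\!\widetilde{E}^2_{p,q}$, this settles the constraint $0 \leq q \leq d$. For the remaining inequality $p + q \leq d$, I would fix $q$ with $0 \leq q \leq d$ and use the spectral sequence
$${}^{q}_{II}\!E^1_{\alpha,\beta} = H^{-\alpha}\bigl(G, \widetilde{E}^1_{\beta, q}\bigr) \Longrightarrow {}^G\!\widetilde{E}^2_{\alpha+\beta,q}.$$
Because $\widetilde{E}^1_{\beta,q} = 0$ whenever $\beta > d - q$, the term ${}^{q}_{II}\!E^1_{\alpha,\beta}$ vanishes for $\beta > d - q$; also $H^{-\alpha}(G,-) = 0$ for $\alpha > 0$, so ${}^{q}_{II}\!E^1_{\alpha,\beta} = 0$ for $\alpha > 0$. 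Hence ${}^{q}_{II}\!E^1_{\alpha,\beta} = 0$ whenever $\alpha + \beta > d - q$, and the same holds on every later page since those are subquotients. As the spectral sequence converges to ${}^G\!\widetilde{E}^2_{\alpha+\beta,q}$, we get ${}^G\!\widetilde{E}^2_{p,q} = 0$ for $p > d - q$, i.e. $p + q \leq d$ whenever ${}^G\!\widetilde{E}^2_{p,q} \neq 0$; passing to subquotients again gives the statement for all $r \geq 2$.

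Two small points deserve care rather than real difficulty. One is the claim that $\widetilde{E}^1_{\beta,q}$, and not merely $\widetilde{E}^2_{\beta,q}$, already vanishes outside the triangle $\{0 \leq q \leq d,\ 0 \leq \beta \leq d - q\}$; this is where one must go back to how the weight complex is realized (via the geometric filtration of an equivariant compactification and equivariant resolutions, all of dimension $\leq d$), so that the relevant chain groups vanish in the appropriate range — this is the genuine input and the step I would expect to be the main obstacle to phrase cleanly, though it is essentially already contained in \cite{MCP}. The other is that one may equally well run the argument with ${}^{q}_{I}\!E^2_{\alpha,\beta} = H^{-\alpha}(G,\widetilde{E}^2_{\beta,q})$ instead of ${}^{q}_{II}\!E$, since $\widetilde{E}^2_{\beta,q}$ obeys the same vanishing; I would mention this as a remark but carry out the proof with whichever of the two is cleanest, noting that both converge to the same ${}^G\!\widetilde{E}^2_{\alpha+\beta,q}$.
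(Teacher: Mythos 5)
Your proposal is correct and follows essentially the same route as the paper: feed the non-equivariant bounds from \cite{MCP} section 1.3 and the vanishing of $H^{-\alpha}(G,\cdot)$ for $\alpha>0$ into one of the auxiliary spectral sequences converging to ${}^G\!\widetilde{E}^2_{\cdot,q}$. The only difference is that the paper runs the argument with ${}^{q}_{I}\!E^2_{\alpha,\beta}=H^{-\alpha}(G,\widetilde{E}^2_{\beta,q})$ — the alternative you mention in your closing remark — which uses only the intrinsic $E^2$-level vanishing and thereby avoids the realization-dependent claim about $\widetilde{E}^1_{\beta,q}$ that you rightly single out as the delicate point of your primary version.
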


\begin{proof} For all $q \in \mathbb{Z}$, we have the spectral sequence
$${}^{q}_{I}\!E^2_{\alpha,\beta} = H^{-\alpha}\left(G, \widetilde{E}^2_{\beta, q}\right) \Rightarrow {}^G\!\widetilde{E}^2_{\alpha + \beta,q}.$$
If $\alpha > 0$, $H^{-\alpha}(G, \cdot) = 0$ and, according to \cite{MCP} section 1.3., for all $\beta \in \mathbb{Z}$, if $\widetilde{E}^2_{\beta, q} \neq 0$ then $\beta \geq 0$, $q \geq 0$ and $\beta + q \leq d$. We conclude by noticing that if $p+q > d$, for all $\alpha,\beta \in \mathbb{Z}$ such that  $\alpha + \beta = p$, we have $\alpha + \beta + q > d$ and then $\beta + q > d$ or $\alpha > 0$. 
\end{proof}

The equivariant weight filtration is bounded but the smallest non-trivial index depends only on the dimension of the considered variety, which is different from the weight filtration case.

\begin{cor} \label{bounds_weight_fil} The equivariant weight filtration on the equivariant homology of $X$ is a bounded increasing filtration 
$$0 = \Omega_{-d-1} H_k(X;G) \subset \Omega_{-d} H_k(X;G) \subset \ldots \subset \Omega_0 H_k(X;G) = H_k(X;G).$$
\end{cor}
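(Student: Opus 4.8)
The plan is to deduce this boundedness statement directly from Corollary~\ref{equiv_weight_spec_bounds} together with the convergence of the reindexed equivariant weight spectral sequence $^G\!\widetilde{E}^r$ to the equivariant weight filtration $\Omega$ on $H_*(X;G)$. Recall that, after reindexing, the associated graded pieces of the filtration $\Omega$ on $H_k(X;G)$ are read off from the $E^\infty$-terms lying on the antidiagonal $p+q=k$: for each $k$, one has $\mathrm{gr}^{\Omega}_{q} H_k(X;G) \cong {}^G\!\widetilde{E}^\infty_{k-q,q}$ (up to the usual indexing conventions of \cite{MCP} section 1.3). So the filtration on $H_k(X;G)$ is supported exactly on those indices $q$ for which $^G\!\widetilde{E}^\infty_{k-q,q}\neq 0$, and it suffices to locate those.

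First I would invoke Corollary~\ref{equiv_weight_spec_bounds}: since $^G\!\widetilde{E}^\infty$ is a subquotient of $^G\!\widetilde{E}^2$, any nonzero term $^G\!\widetilde{E}^\infty_{p,q}$ forces $0\le q\le d$ and $p+q\le d$. Translating into the filtration index: a nonzero graded piece $\mathrm{gr}^{\Omega}_q H_k(X;G)$ occurs only when $0\le q\le d$ (from the constraint on $q$) — and automatically $q\le k$ since $p=k-q\ge 0$ would be needed... but in fact here the relevant bound is just $0\le q \le d$, because the other inequality $p+q\le d$, i.e. $k\le d$, only tells us which homology degrees are nontrivial. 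Hence every jump of the filtration $\Omega_\bullet H_k(X;G)$ happens in the range $q\in\{0,1,\dots,d\}$, giving $\Omega_{-1}H_k(X;G)=0$ — wait, one must be careful with the sign convention: the reindexing sends the original degree to $q'=-p$, so the filtration index appearing in the statement runs over negative values. Concretely, the term $^G\!\widetilde{E}^2_{p,q}$ with $0\le q\le d$ corresponds to filtration level $-q$ with $-d\le -q\le 0$; thus the only nonzero graded pieces of $\Omega$ on $H_k(X;G)$ sit in filtration levels between $-d$ and $0$, which is exactly the asserted chain
$$0 = \Omega_{-d-1} H_k(X;G) \subset \Omega_{-d} H_k(X;G) \subset \ldots \subset \Omega_0 H_k(X;G) = H_k(X;G).$$
To be fully rigorous I would spell out: $\Omega_{-d-1}H_k(X;G)=0$ because there is no nonzero $^G\!\widetilde{E}^\infty$-term with $q\ge d+1$; and $\Omega_0 H_k(X;G)=H_k(X;G)$ because there is no nonzero term with $q\le -1$, so the filtration is exhausted by level $0$.

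The only genuinely delicate point — and it is bookkeeping rather than a real obstacle — is matching the reindexing conventions: one has to track the substitution $p'=2p+q$, $q'=-p$, $r'=r+1$ carefully so that the bounds $0\le q\le d$, $p+q\le d$ of Corollary~\ref{equiv_weight_spec_bounds} (stated in the \emph{reindexed} coordinates) land on the correct filtration indices, and to confirm that convergence of $^G\!\widetilde{E}^r$ to $\Omega$ is genuine despite the spectral sequence failing to be left-bounded. Convergence nonetheless holds because, for each fixed total degree $p+q=k$, the antidiagonal meets only finitely many nonzero spots (by the bound $p+q\le d$ combined with $0\le q$, i.e. $0\le q\le k\le d$ and $p=k-q$ ranges over a finite set), so each $H_k(X;G)$ carries a finite filtration and the spectral sequence degenerates in each total degree after finitely many pages. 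This finiteness on antidiagonals is precisely what makes the filtration bounded even though the pages themselves have infinitely many nonzero entries in fixed rows. With that observation in place, the corollary is immediate and no further computation is needed.
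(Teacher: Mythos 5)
Your argument is correct and is essentially the paper's own proof: the paper likewise expresses $\Omega_p H_k(X;G)$ as the direct sum of the $E^\infty$-terms on the antidiagonal of total degree $k$ and applies Corollary \ref{equiv_weight_spec_bounds} to see that nonzero graded pieces occur only at filtration levels $-q$ with $0\le q\le d$, whence $\Omega_{-d-1}=0$ and $\Omega_0=H_k(X;G)$. One small quibble: in your parenthetical ``$0\le q\le k\le d$'' the inequality $q\le k$ need not hold (since $p=k-q$ may be negative in the equivariant setting), but the finiteness of each antidiagonal already follows from $0\le q\le d$ alone, so the argument is unaffected.
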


\begin{proof} One can prove $\Omega_0 H_k(X;G) = H_k(X;G)$ and $\Omega_{-d-1} H_k(X;G) = 0$ using the equalities 
$$\Omega_p H_k(X;G) = \bigoplus_{q \geq 0} {}^G\!\widetilde{E}^{\infty}_{k+p-q,-(p-q)}$$
and the previous corollary \ref{equiv_weight_spec_bounds}.
\end{proof}

\begin{rem} The fact that the weight spectral sequence is left bounded is a key tool to extract additive invariants from its level two, namely the virtual Betti numbers (see \cite{MCP} section 1.3). In the equivariant framework, the equivariant weight spectral sequence's long exact sequences of additivity are not finite and furthermore, the condition of compactness-smoothness does not imply the collapsing of the equivariant weight spectral sequence at level two.
\end{rem}

\begin{prop} \label{var_comp-non-sing_equiv} Assume $X$ to be compact and nonsingular. Then the equivariant weight spectral sequence ${}^G\!\widetilde{E}$ of $X$ coincides, from level two, with the Hochschild-Serre spectral sequence 
$${}_{I}\!E^2_{p,q}(X) = H^{-p}\left(G, H_q(X)\right) \Rightarrow H_{p+q}(X ; G)$$ 
associated to $X$ and $G$.
\end{prop}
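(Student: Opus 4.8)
The plan is to exploit the fact that when $X$ is compact and nonsingular, the weight complex with action is already well-understood: by the last item of Remark~\ref{rem_fil_geom_act}, ${}^G\!\mathcal{W}C_*(X)$ is isomorphic in $H\circ\mathcal{C}^G$ to the canonically filtered complex $F^{can}C_*(X)$. Applying the functor $L$, which preserves filtered quasi-isomorphisms, we get $\Omega C^G_*(X) \cong L_*(F^{can}C_*(X)) = \mathcal{F}^{can}C^G_*(X)$ in $H\circ\mathcal{C}_-$. Hence the equivariant weight spectral sequence of $X$ is (from page $1$ onward, up to canonical isomorphism) the spectral sequence induced by applying $L$ to the canonically filtered complex $C_*(X)$ equipped with its $G$-action.

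Next I would invoke Lemma~\ref{lem_spec_seq_fil_can_equiv}, which computes precisely this spectral sequence: for the filtered complex $\mathcal{F}^{can}L_*(K_*)$ with $K_*=C_*(X)$ one has $E^1_{p,q} = H^{-2p-q}(G, H_{-p}(K_*))$ and, for $r\geq 1$, $E^r_{p,q} = {}_I\!E^{r+1}_{2p+q,-p}$, where ${}_I\!E$ is the Hochschild--Serre spectral sequence of $G$ with coefficients in $C_*(X)$. Since $H_{-p}(C_*(X)) = H_{-p}(X)$, this identifies the (unreindexed) equivariant weight spectral sequence ${}^G\!E^r$ of $X$, for $r\geq 1$, with the reindexed Hochschild--Serre spectral sequence ${}_I\!E^{r+1}_{2p+q,-p}$. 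Now I apply the reindexing $p'=2p+q$, $q'=-p$, $r'=r+1$ that defines ${}^G\!\widetilde{E}$: under it, ${}^G\!\widetilde{E}^{r'}_{p',q'} = {}^G\!E^{r'-1}_{-q', p'+2q'}$, and substituting into the formula from the lemma one checks that the $2(\!-q')+ (p'+2q') = p'$ and $-(-q') = q'$ indices match, giving ${}^G\!\widetilde{E}^{r'}_{p',q'} = {}_I\!E^{r'}_{p',q'}$ for all $r'\geq 2$. One then has to verify that the differentials agree, which follows from the naturality statement already recorded at the end of the proof of Lemma~\ref{lem_spec_seq_fil_can_equiv} (the differentials of both spectral sequences are induced by the same underlying morphisms). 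Finally, the Hochschild--Serre spectral sequence ${}_I\!E^2_{p,q}(X) = H^{-p}(G,H_q(X)) \Rightarrow H_{p+q}(X;G)$ is exactly the one attached to $X$ and $G$ as in Definition~\ref{hom_equiv} and the following remark, which completes the identification.

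The main obstacle I anticipate is purely bookkeeping: getting the two layers of reindexing to line up correctly — first the internal reindexing in Lemma~\ref{lem_spec_seq_fil_can_equiv} relating $E^r$ to ${}_I\!E^{r+1}$, and then the external reindexing $(p',q',r')$ defining ${}^G\!\widetilde{E}$ from ${}^G\!E$ — and confirming that after composing them one lands exactly on ${}_I\!E^r_{p,q}$ with no residual shift. One should also be slightly careful that the isomorphism ${}^G\!\mathcal{W}C_*(X)\cong F^{can}C_*(X)$ holds in $H\circ\mathcal{C}^G$ and that $L$ being a functor on the localized categories transports it faithfully to an isomorphism of the associated spectral sequences from page $1$ on; this is guaranteed by the propositions in Subsection~\ref{group_cohom_l_func} showing $L$ preserves filtered quasi-isomorphisms. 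No genuinely hard analysis is involved — the content is entirely in the already-established functoriality of $L$ and the computation of Lemma~\ref{lem_spec_seq_fil_can_equiv}.
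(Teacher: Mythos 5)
Your proposal is correct and follows exactly the paper's own argument: the quasi-isomorphism ${}^G\!\mathcal{W}C_*(X)\cong F^{can}C_*(X)$ from Remark \ref{rem_fil_geom_act}, the preservation of filtered quasi-isomorphisms by $L$, and then Lemma \ref{lem_spec_seq_fil_can_equiv} to identify the result with the Hochschild--Serre spectral sequence after reindexing. Your explicit check that the two layers of reindexing compose to the identity ($2(-q')+(p'+2q')=p'$, $-(-q')=q'$) is a detail the paper leaves implicit, but the route is the same.
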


\begin{proof} If $X$ is compact nonsingular, the weight complex with action ${}^G\!\mathcal{W} C_*(X)$ is quasi-isomorphic to $F^{can} C_*(X)$ in $\mathcal{C}^G$. Since the functor $L$ preserves filtered quasi-isomorphisms, the equivariant weight complex $\Omega C_*^G$ is quasi-isomorphic to $\mathcal{F}^{can} C_*^G = \mathcal{F}^{can} L_*\left(C_*\right)$ in $\mathcal{C}_-$ on compact nonsingular real algebraic $G$-varieties (the same goes for any realization of the equivariant weight complex in $\mathcal{C}^G$).

Then we use lemma \ref{lem_spec_seq_fil_can_equiv} to conclude that, after reindexing, the equivariant weight spectral sequence of $X$ is isomorphic to the Hochschild-Serre spectral sequence of $X$ from level two.
\end{proof}

In particular, even in the case of a compact nonsingular variety, the equivariant weight spectral sequence may not degenerate at level two : see example \ref{ex_sphere_act}.
\\

Below, we compute the equivariant weight spectral sequences and equivariant weight filtrations of a singular real algebraic variety equipped with two different algebraic involutions.

\begin{ex} \label{ex_suite_spec_poids_equiv} Let $X$ be the real algebraic curve given by the equation $y^2 = x^2 - x^4$ in $\mathbb{R}^2$.

\begin{enumerate}
	\item We consider the action of $G = \mathbb{Z}/2\mathbb{Z}$ on $X$ given by the involution $\sigma : (x,y) \mapsto (-x,y)$. The page $\widetilde{E}^2(X)$ of the reindexed weight spectral sequence is given by 
$$\begin{matrix}
\mathbb{Z}_2 [X] & \\
\mathbb{Z}_2 [\{p_0\}] & \mathbb{Z}_ 2[X_1]
\end{matrix}$$
where $p_0 = (0,0)$ is the unique point of $X$ being fixed under the action of $G$, and $X_1$ and $X_2$ are the two $1$-cycles of $X$, exchanged by the action.

We have
\begin{itemize}
	\item ${}^G\!\widetilde{E}^2_{p,1} = H^{-p}\left(G, \widetilde{E}^2_{0,1}\right) = \mathbb{Z}_2[X]$ if $p \leq 0$, and $0$ otherwise.
	\item the terms ${}^G\!\widetilde{E}^2_{p,0}(X)$ are given by the Hochschild-Serre spectral sequence associated to the $G$-complex $\widetilde{E}^1_{*,0}(X)$ : the level two of this spectral sequence is
	$$\begin{matrix}
		\cdots & \mathbb{Z}_2[X_1] & \cdots & \mathbb{Z}_2[X_1] & \mathbb{Z}_2[X_1] \\
		\cdots & \mathbb{Z}_2 [\{p_0\}]  & \cdots & \mathbb{Z}_2 [\{p_0\}] & \mathbb{Z}_2 [\{p_0\}] 
	\end{matrix}$$ 
and since the differentials are trivial, we have 
$${}^G\!\widetilde{E}^2_{p,0}(X) =
\begin{cases}
	\mathbb{Z}_2[X_1] \text{ if $p = 1$,} \\
	\mathbb{Z}_2[X_1]  \oplus \mathbb{Z}_2 [\{p_0\}] \text{ if $p \leq 0$,} \\
	0 \text{ otherwise.}
\end{cases}$$
\end{itemize}

Consequently, the page ${}^G\!\widetilde{E}^2(X)$ is
$$\begin{matrix} 
	\cdots & \mathbb{Z}_2[X] & \cdots & \mathbb{Z}_2[X] & \mathbb{Z}_2[X] & \\
	\cdots & \mathbb{Z}_2[X_1]  \oplus \mathbb{Z}_2 [\{p_0\}]  & \cdots & \mathbb{Z}_2[X_1]  \oplus \mathbb{Z}_2 [\{p_0\}]  & \mathbb{Z}_2[X_1]  \oplus \mathbb{Z}_2 [\{p_0\}] & \mathbb{Z}_2[X_1]
\end{matrix}$$
and the page ${}^G\!\widetilde{E}^3(X)$ is
$$\begin{matrix} 
	\cdots & 0 & \cdots & 0 & \mathbb{Z}_2[X] & \\
	\cdots & \mathbb{Z}_2 [\{p_0\}]  & \cdots & \mathbb{Z}_2 [\{p_0\}]  &  \mathbb{Z}_2 [\{p_0\}] & 0
\end{matrix}$$
because ${}^G\!\widetilde{d}^2([X_1]) = \partial \oplus (1 + \sigma)([X_1]) = [X]$ and ${}^G\!\widetilde{d}^2([\{p_0\}]) = 0$.

As a consequence, the equivariant weight spectral sequence ${}^G\!\widetilde{E}(X)$ collapses at ${}^G\!\widetilde{E}^3(X)$ and the equivariant weight filtration of $X$ with respect to the action of $\sigma$ is given by 
$$\Omega_{-1} H_1(X ; G) = \Omega_0 H_1(X ; G) = \mathbb{Z}_2[X]$$
and
$$0 = \Omega_{-1} H_k(X ; G) \subset \Omega_0 H_k(X ; G) =  \mathbb{Z}_2 [\{p_0\}] $$
for $k \leq 0$.

\item If now we consider the action of $G$ given by $(x,y) \mapsto (x,-y)$, we obtain the same terms for the page ${}^G\!\widetilde{E}^2(X)$ :
$$\begin{matrix} 
	\cdots & \mathbb{Z}_2[X] & \cdots & \mathbb{Z}_2[X] & \mathbb{Z}_2[X] & \\
	\cdots & \mathbb{Z}_2[X_1]  \oplus \mathbb{Z}_2 [\{p_0\}]  & \cdots & \mathbb{Z}_2[X_1]  \oplus \mathbb{Z}_2 [\{p_0\}]  & \mathbb{Z}_2[X_1]  \oplus \mathbb{Z}_2 [\{p_0\}] & \mathbb{Z}_2[X_1]
\end{matrix}$$
However, here, ${}^G\!\widetilde{d}^2([X_1]) = 0$ because the cycle $X_1$ is globally invariant under the action. As a consequence, the differential ${}^G\!\widetilde{d}^2$ is trivial and the equivariant spectral sequence of~$X$ degenerates at ${}^G\!\widetilde{E}^2(X)$. The equivariant weight filtration on the equivariant homology of~$X$ is then given by
$$\mathbb{Z}_2[X] = \Omega_{-1} H_1(X ; G) \subset \Omega_{0} H_1(X ; G) = \mathbb{Z}_2[X_1] \oplus \mathbb{Z}_2[X_2]$$
and
$$\mathbb{Z}_2[X] = \Omega_{-1} H_k(X ; G) \subset \Omega_{0} H_k(X ; G) = \mathbb{Z}_2[X_1] \oplus \mathbb{Z}_2[X_2] \oplus \mathbb{Z}_2 [\{p_0\}]$$
for $k \leq 0$. 
\end{enumerate}
\end{ex}

\subsection{The odd-order group case}

If the order of the group $G$ is odd, the equivariant weight spectral sequence correspond to the elements of the weight spectral sequence being invariant under the action of~$G$, allowing one, in particular, to extract additive invariants. This is because the chains we consider have coefficients in $\mathbb{Z}_2$ : if the order of $G$ is odd, the ring $\mathbb{Z}_2[G]$ is semi-simple according to Maschke theorem (see for example \cite{CTVZ} Theorem 2.1.1).

Consequently, every $\mathbb{Z}_2[G]$-module is projective (and injective). Then  
\begin{equation*}\label{triv_res}... \rightarrow 0 \rightarrow \mathbb{Z}_2 \rightarrow \mathbb{Z}_2 \rightarrow 0 \tag{3.2}\end{equation*}
is a projective resolution of $\mathbb{Z}_2$ over $\mathbb{Z}_2[G]$ and the cohomology of $G$ with coefficients in a $\mathbb{Z}_2[G]$-module $M$ is $H^n\left(G,M\right) = \Hom_{\mathbb{Z}_2[G]}(\mathbb{Z}_2 ,M) = M^G$ for $n=0$, and $0$ otherwise.

Furthermore, the semi-simplicity of $\mathbb{Z}_2[G]$ is equivalent to the condition that every short exact sequence of $\mathbb{Z}_2[G]$-modules is split. In particular, the functor $\Gamma^G$ that associates to any $\mathbb{Z}_2[G]$-module the set of its invariant elements is exact.
\\

Let $G$ be a group of odd order. Choosing the trivial resolution \eqref{triv_res} of $\mathbb{Z}_2$ as a projective resolution over $\mathbb{Z}_2[G]$, in the double complex associated to the functor $L$, the column $p=0$ is the only one which is potentially non-zero and then, for any $G$-complex $K_*$,
$$L_*\left(K_*\right) = \left(K_*\right)^G$$
and
$$H_*\left(G, K_*\right) = H_n\left(L_*\left(K_*\right)\right) = H_n\left((K_*)^G\right) = \left(H_n\left(K_*\right)\right)^G,$$
(the functor $\Gamma^G$ is exact on the category of $\mathbb{Z}_2[G]$-modules).

Finally, the spectral sequences ${}_{I}\!E$ et ${}_{II}\!E$ associated to a $G$-complex $K_*$ coincide and collapse at level two :
$${}_{I}\!E^2_{p,q} = H^{-p}\left(G,H_q\left(K_*\right)\right) = 
\begin{cases} 
\left(H_q(K_*)\right)^G = H_q\left((K_*)^G\right) \mbox{ if } p = 0,
\\ 0 \mbox{ otherwise, }
\end{cases}$$ 
$${}_{II}\!E^2_{p,q} = H_q\left(H^{-p}\left(G,K_*\right)\right) = 
\begin{cases} 
H_q\left((K_*)^G\right) = \left(H_q(K_*)\right)^G \mbox{ if } p = 0,
\\ 0 \mbox{ otherwise. }
\end{cases}$$  
\\

Now, if we consider a real algebraic $G$-variety $X$, we have
$$H_*(X ; G) = H_*\left((C_*(X))^G\right)= \left(H_*(X)\right)^G~,~\Omega C^G_*(X) = (\mathcal{W} C_*(X))^G~,~{}^G\!E(X) = \left(E(X)\right)^G.$$

In particular, the non-zero terms of the reindexed equivariant spectral sequence of $X$ are bounded into the triangle of vertices $(0,0)$, $(0,d)$ and $(d,0)$ (if $d$ is the dimension of $X$). Consequently, we are able to recover the equivariant virtual Betti numbers (see \cite{GF} and section \ref{sect_add_inv} below) from the equivariant weight spectral sequence when the order of the group~$G$ is odd.

\begin{prop} Let $G$ be a finite group of odd order. For all real algebraic $G$-varieties $X$ and all $q \in \mathbb{Z}$, the $q^{\text{th}}$ equivariant virtual Betti number of $X$ (\cite{GF}) is the alternating sum of the dimensions of the terms of the row $q$ of the reindexed equivariant weight spectral sequence :

$$\beta_q^G(X) = \sum_p (-1)^{p \geq 0} \dim_{\mathbb{Z}_2}  {}^G\!\widetilde{E}^2_{p,q}.$$

\end{prop}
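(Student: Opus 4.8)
The plan is to reduce the statement to the non-equivariant additivity result of McCrory--Parusi\'nski via the Maschke-theoretic simplifications established just above. First I would recall that, because $G$ has odd order, $\mathbb{Z}_2[G]$ is semisimple, the invariants functor $\Gamma^G$ is exact, and hence taking the trivial resolution $\eqref{triv_res}$ gives $L_*(K_*) = (K_*)^G$ for every $G$-complex $K_*$. Applying this to $K_* = \mathcal{W}C_*(X)$ and using that $L$ preserves filtered quasi-isomorphisms, the equivariant weight spectral sequence satisfies ${}^G\!E^r(X) = (E^r(X))^G$ for all $r$, compatibly with the reindexing; thus ${}^G\!\widetilde{E}^2_{p,q}(X) = (\widetilde{E}^2_{p,q}(X))^G$. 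In particular the non-zero terms of ${}^G\!\widetilde{E}^2$ lie in the triangle with vertices $(0,0)$, $(0,d)$, $(d,0)$, so the alternating sum over $p$ in the statement is a finite sum and well-defined.

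Next I would set $\widetilde{\beta}_q^G(X) := \sum_p (-1)^p \dim_{\mathbb{Z}_2} {}^G\!\widetilde{E}^2_{p,q}(X)$ and verify that $\widetilde{\beta}_q^G$ is additive: for an equivariant closed inclusion $Y \subset X$, Theorem \ref{equiv_wei_comp} gives the long exact additivity sequence
$$\cdots \rightarrow {}^G\!\widetilde{E}^2_{p,q}(Y) \rightarrow {}^G\!\widetilde{E}^2_{p,q}(X) \rightarrow {}^G\!\widetilde{E}^2_{p,q}(X \setminus Y) \rightarrow {}^G\!\widetilde{E}^2_{p-1,q}(Y) \rightarrow \cdots$$
which, being finite in the odd-order case, forces $\widetilde{\beta}_q^G(X) = \widetilde{\beta}_q^G(Y) + \widetilde{\beta}_q^G(X \setminus Y)$ by the usual Euler-characteristic argument. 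I would also record that on $\mathbf{Reg}^G_{comp}(\mathbb{R})$ the spectral sequence degenerates at page $2$ (Proposition \ref{var_comp-non-sing_equiv} together with $H^{-p}(G,-) = 0$ for $p \neq 0$ in the odd-order case), so that ${}^G\!\widetilde{E}^2_{p,q}(X)$ is concentrated in $p=0$ with value $(H_q(X))^G$, whence $\widetilde{\beta}_q^G(X) = \dim_{\mathbb{Z}_2}(H_q(X))^G$ on compact nonsingular $G$-varieties.

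Finally, I would invoke the characterization of Fichou's equivariant virtual Betti numbers $\beta_q^G$ from \cite{GF}: they are the unique additive invariants on $\mathbf{Sch}_c^G(\mathbb{R})$ taking the value $\dim_{\mathbb{Z}_2}(H_q(X))^G$ on compact nonsingular $G$-varieties (this uniqueness is exactly the content of the construction in \cite{GF}, which builds them by resolution of singularities and inclusion--exclusion from precisely this initial datum). Since $\widetilde{\beta}_q^G$ is additive and agrees with $\beta_q^G$ on the generating subcategory $\mathbf{Reg}^G_{comp}(\mathbb{R})$, the two coincide on all of $\mathbf{Sch}_c^G(\mathbb{R})$, which is the claimed formula. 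The main obstacle is the bookkeeping in the first step: one must check carefully that $L_* = \Gamma^G$ in the odd-order case is compatible with the filtration and with the reindexing $p' = 2p+q$, $q' = -p$, $r' = r+1$ so that the identification ${}^G\!\widetilde{E}^2 = (\widetilde{E}^2)^G$ holds on the nose rather than merely up to a shift — but this is handled by Lemma \ref{lem_spec_seq_fil_can_equiv} and the computation $E^0_{p,*} = L_{p+*}(J_p K_*/J_{p-1}K_*)$ already in the excerpt, so no genuinely new difficulty arises.
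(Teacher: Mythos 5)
Your proposal is correct and follows essentially the same route as the paper: the paper's proof likewise uses the finiteness of the additivity long exact sequence in the odd-order case to get additivity of the alternating sums, computes that on compact nonsingular varieties the page ${}^G\!\widetilde{E}^2$ is concentrated in $p=0$ with value $\left(H_q(X)\right)^G = H_q(X;G)$, and concludes by the characterization of Fichou's $\beta_q^G$ as the unique additive invariants with that value on compact nonsingular $G$-varieties. Your version merely spells out the Maschke/trivial-resolution bookkeeping (already established in the surrounding subsection) in more detail than the paper's two-line argument.
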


\begin{proof} In this set-up, for $q \in \mathbb{Z}$, the additivity long exact sequence for an equivariant closed inclusion (\ref{long_ex_seq_equiv_spec_seq}) is finite and gives us the additivity of the right-hand side expression. Moreover, if $X$ is compact nonsingular, ${}^G\!\widetilde{E}^2_{p,q} = \left(\widetilde{E}^2_{p,q}\right)^G =~0$ if $p \neq 0$ and
$$^G\!\widetilde{E}^2_{0,q} = \left(\widetilde{E}^2_{0,q}\right)^G = \left(H_q(X)\right)^G = H_q(X ; G).$$
\end{proof}

\section{Additivity} \label{sect_add_inv}

In previous section \ref{sect_equiv_weight_fil}, we could not extract finite additive invariants from the equivariant weight spectral sequence in the general case. In this section \ref{sect_add_inv}, we construct bounded double complexes~${}^k\!\widehat{C}$, for all $k \in \mathbb{Z}$, which extend the additivity of the Nash-constructible filtration and of the weight spectral sequence it induces (remark \ref{rem_add_gp_triv} and proposition \ref{suite_ex_long_finie}). For each $k \in \mathbb{Z}$, the columns of one of the spectral sequences induced by ${}^k\!\widehat{C}$ are the columns of spectral sequences ${}^{~q}_{II}\!E$, the ones which converge to the level two of the reindexed equivariant weight spectral sequence (see paragraph \ref{subsec_equi_wei_spec}). We show that these columns provide finite additive invariants in terms of bounded long exact sequences. 

We then consider the other spectral sequence the double complex ${}^k\!\widehat{C}$ induces, focusing on the case of the two-elements group, for which we have the Smith Nash-constructible exact sequence that enables its computation. The abutment of this spectral sequence involves the geometry of invariant chains and the geometry of the fixed points set and each of its columns also yields additive invariants in terms of bounded long exact sequences. Furthermore, we prove that the Euler characteristic of this spectral sequence coincides with the $k^{\text{th}}$ equivariant virtual Betti number in some cases. The equivariant virtual Betti numbers are the unique additive invariants defined on the category of real algebraic $G$-varieties that equal the dimensions of equivariant homology groups on compact nonsingular varieties (see \cite{GF}).

We finally discuss the relation of these additive invariants with equivariant virtual Betti numbers in general case for $G = \mathbb{Z}/2\mathbb{Z}$. The extension theorem \ref{crit_action} can provide a filtered complex inducing a spectral sequence from which we can recover the equivariant virtual Betti numbers. If the invariant semialgebraic chains equipped with the Nash-constructible filtration realized this filtered complex, the spectral sequence induced by the double complex ${}^k\!\widehat{C}$ would compute the equivariant virtual Betti numbers. 

\subsection{The double complex ${}^k\!\widehat{C}$} \label{sec_add}

Let $G$ be a finite group.

Let $X$ be a real algebraic $G$-variety. For the sake of readability, we simply denote $C_*(X)$ by $C_*$. 

The group cohomology of $G$ is functorial and each exact sequence $0 \rightarrow \mathcal{N}_{\alpha-1} C_{\beta} \rightarrow \mathcal{N}_{\alpha} C_{\beta} \rightarrow \frac{\mathcal{N}_{\alpha} C_{\beta}}{\mathcal{N}_{\alpha-1} C_{\beta}} \rightarrow 0$ induces a long exact sequence in group cohomology, compatible with the boundary operator of $C_*$. Therefore, we can construct the following bounded double complex
$$\xymatrix{ &  \ar[d] & \ar[d] & \ar[d] & \\
 & H^{-k-(\alpha-1)}\left(G, \frac{\mathcal{N}_{\alpha-1} C_{\beta+1}}{\mathcal{N}_{\alpha-2} C_{\beta+1}}\right) \ar[d]^{d^1} \ar[l] &  H^{-k-\alpha}\left(G, \frac{\mathcal{N}_{\alpha} C_{\beta+1}}{\mathcal{N}_{\alpha-1} C_{\beta+1}}\right) \ar[d]^{d^1} \ar[l]_{d^0} &  H^{-k-(\alpha+1)}\left(G, \frac{\mathcal{N}_{\alpha+1} C_{\beta+1}}{\mathcal{N}_{\alpha} C_{\beta+1}}\right) \ar[d]^{d^1} \ar[l]_{d^0} & \ar[l] \\
 & H^{-k-(\alpha-1)}\left(G, \frac{\mathcal{N}_{\alpha-1} C_{\beta}}{\mathcal{N}_{\alpha-2} C_{\beta}}\right) \ar[d]^{d^1} \ar[l] &  H^{-k-\alpha}\left(G, \frac{\mathcal{N}_{\alpha} C_{\beta}}{\mathcal{N}_{\alpha-1} C_{\beta}}\right) \ar[d]^{d^1} \ar[l]_{d^0} &  H^{-k-(\alpha+1)}\left(G, \frac{\mathcal{N}_{\alpha+1} C_{\beta}}{\mathcal{N}_{\alpha} C_{\beta}}\right) \ar[d]^{d^1} \ar[l]_{d^0} & \ar[l]\\
 & H^{-k-(\alpha-1)}\left(G, \frac{\mathcal{N}_{\alpha-1} C_{\beta-1}}{\mathcal{N}_{\alpha-2} C_{\beta-1}}\right) \ar[d] \ar[l] &  H^{-k-\alpha}\left(G, \frac{\mathcal{N}_{\alpha} C_{\beta-1}}{\mathcal{N}_{\alpha-1} C_{\beta-1}}\right) \ar[d] \ar[l]_{d^0} &  H^{-k-(\alpha+1)}\left(G, \frac{\mathcal{N}_{\alpha+1} C_{\beta-1}}{\mathcal{N}_{\alpha} C_{\beta-1}}\right) \ar[d] \ar[l]_{d^0} & \ar[l] \\
 & & &  & 
}$$ 
that we denote by $({}^k\!\widehat{C}_{\alpha,\beta}(X))_{(\alpha,\beta) \in \mathbb{Z} \times \mathbb{Z}}$. The differentials $d^1$ are induced by the boundary operator of $C_*$ and the differentials $d^0$ are given by the following commutative diagram
$$\xymatrix{ H^{-k-(\alpha-1)}\left(G, \mathcal{N}_{\alpha-1} C_{\beta}\right) \ar[r] &  H^{-k-(\alpha-1)}\left(G, \frac{\mathcal{N}_{\alpha-1} C_{\beta}}{\mathcal{N}_{\alpha-2} K_{\beta}}\right) \\
H^{-k-\alpha}\left(G, \frac{\mathcal{N}_{\alpha} C_{\beta}}{\mathcal{N}_{\alpha-1} C_{\beta}}\right) \ar[u] \ar[ru]^{d^0} & \\
H^{-k-\alpha}\left(G, \mathcal{N}_{\alpha} C_{\beta}\right)  \ar[u] &  H^{-k-(\alpha+1)}\left(G, \frac{\mathcal{N}_{\alpha+1} C_{\beta}}{\mathcal{N}_{\alpha} C_{\beta}}\right) \ar[l] \ar[lu]^{d^0}
}$$

\begin{rem} \label{rem_add_gp_triv}
For $G = \{e\}$, the double complex $({}^k\!\widehat{C}_{\alpha,\beta})_{(\alpha,\beta) \in \mathbb{Z} \times \mathbb{Z}}$ is reduced to the column $\alpha = - k$ and the homology of this column provides the row $q = k$ of the page two of the reindexed weight spectral sequence of $X$. In particular, the Euler characteristic of the homology of the column $\alpha = - k$ is the $k^{\text{th}}$ virtual Betti number.
\end{rem}

Consider the induced spectral sequence ${}_{II}^{~k}\!\widehat{E}$, computed by first taking the homology of $d^1$ :
$${}_{II}^{~k}\!\widehat{E}^1_{\alpha,\beta} = H_{\beta}\left(H^{-k-\alpha}\left(G,\frac{\mathcal{N}_{\alpha} C_{*}}{\mathcal{N}_{\alpha-1} C_{*}}\right)\right) = {}^{- \alpha}_{~II}\!E^2_{\alpha + k, \alpha + \beta}.$$
Each column of the page ${}_{II}^{~k}\!\widehat{E}^1$ is additive in the following meaning :

\begin{prop} \label{suite_ex_long_finie} Let $Y \subset X$ be an equivariant closed inclusion in $\mathbf{Sch}_c^G(\mathbb{R})$. For any $q$ and~$i$ in $\mathbb{Z}$, there is a finite long exact sequence
$$\cdots \rightarrow {}^{~q}_{II}\!E^2_{i,j}(Y) \rightarrow {}^{~q}_{II}\!E^2_{i,j}(X) \rightarrow {}^{~q}_{II}\!E^2_{i,j}(X \setminus Y) \rightarrow {}^{~q}_{II}\!E^2_{i,j-1}(Y) \rightarrow \cdots,$$
where ${}^{~q}_{II}\!E^2_{i,j} = H_j\left(H^{-i}\left(G, \widetilde{E}^1_{*,q} \right)\right)$, $i,j \in \mathbb{Z}$, is the page two of the spectral sequence ${}^{~q}_{II}\!E$ induced by the Nash-constructible filtration (see paragraph \ref{subsec_equi_wei_spec}).
\end{prop}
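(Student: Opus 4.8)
The plan is to derive the long exact sequence from the additivity property of the equivariant Nash-constructible complex $\Lambda C_*^G$ established in Theorem \ref{equiv_wei_comp} (since $\Lambda C_*^G$ realizes $\Omega C_*^G$), by passing through the spectral sequences at the chain level rather than the derived-category level. First I would recall that for the equivariant closed inclusion $Y \subset X$ the simple filtered complex $\mathbf{s}\bigl(\Lambda C_*^G(Y) \to \Lambda C_*^G(X)\bigr)$ is filtered quasi-isomorphic to $\Lambda C_*^G(X\setminus Y)$ in $H\circ\mathcal{C}_-$, and, using Proposition \ref{l_com_s}, that $L$ commutes with $\mathbf{s}$; so this simple complex can equally be computed as $L_*$ applied to the simple $G$-complex $\mathbf{s}\bigl(\mathcal{N}C_*(Y)\to\mathcal{N}C_*(X)\bigr)$, which in turn is filtered quasi-isomorphic in $\mathcal{C}^G$ to $\mathcal{N}C_*(X\setminus Y)$ by the non-equivariant additivity of the Nash-constructible filtration realizing the weight complex (remark \ref{rem_fil_geom_act}).

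Next I would set up the short exact sequence of filtered $G$-complexes underlying this mapping-cone construction: at each filtration level $\alpha$ one has the termwise-split short exact sequence of complexes
$$0 \to \mathcal{N}_\alpha C_*(Y) \to \mathbf{s}\bigl(\mathcal{N}_\alpha C_*(Y)\to\mathcal{N}_\alpha C_*(X)\bigr) \to \mathcal{N}_\alpha C_*(X)[1] \to 0,$$
or, more directly, the short exact sequence of filtered $G$-complexes computing additivity, $0\to \mathcal{N}C_*(Y)\to \mathrm{cyl}\to \mathcal{N}C_*(X\setminus Y)\to 0$ whose associated graded gives, after applying $E^0$ and reindexing, a short exact sequence of $G$-complexes $0\to \widetilde E^1_{*,q}(Y)\to \widetilde E^1_{*,q}(X)\oplus(\text{acyclic})\to \widetilde E^1_{*,q}(X\setminus Y)\to 0$ for each fixed $q$ — here one uses that the $E^1$-page of the weight spectral sequence of the cone splits as a direct sum over the bottom and top of the cone, exactly as in \cite{MCP} section 1.3, and that all these maps are $G$-equivariant by functoriality of semialgebraic chains with action. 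The point is that the $q$-th row $\widetilde E^1_{*,q}$ is itself a $G$-complex (of finite-dimensional $\mathbb{Z}_2$-vector spaces, hence bounded), and additivity of the weight complex gives a short exact sequence of such $G$-complexes.

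Then I would apply $\Hom_G(F_*,-)$, for $F_*$ a projective resolution of $\mathbb{Z}_2$ over $\mathbb{Z}_2[G]$, to this short exact sequence of $G$-complexes: because each $F_i$ is projective, $\Hom_G(F_*,-)$ is exact, so we get a short exact sequence of bicomplexes and hence of their total complexes $L_*(\widetilde E^1_{*,q}(Y))\to L_*(\widetilde E^1_{*,q}(X))\to L_*(\widetilde E^1_{*,q}(X\setminus Y))$. Taking the associated long exact sequence in homology and recalling that ${}^{~q}_{II}\!E^2_{i,j} = H_j(H^{-i}(G,\widetilde E^1_{*,q})) = H_{i+j}\bigl(L_*(\widetilde E^1_{*,q})\bigr)$ along the appropriate grading (this is the description of the page $2$ of ${}^{~q}_{II}\!E$, which by Proposition \ref{eq_noneq_weight_spec} and the surrounding discussion converges to ${}^G\!\widetilde E^2_{\cdot,q}$), we obtain precisely the claimed long exact sequence $\cdots\to {}^{~q}_{II}\!E^2_{i,j}(Y)\to {}^{~q}_{II}\!E^2_{i,j}(X)\to {}^{~q}_{II}\!E^2_{i,j}(X\setminus Y)\to {}^{~q}_{II}\!E^2_{i,j-1}(Y)\to\cdots$. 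Finiteness is immediate since $\widetilde E^1_{*,q}$ lives in finitely many degrees and takes finite-dimensional values, so $L_*$ of it is a bounded complex and its homology vanishes outside a finite range.

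The main obstacle is the bookkeeping in the two middle paragraphs: one must be careful that the $E^0$/$E^1$ decomposition of the mapping cone in the weight spectral sequence is genuinely $G$-equivariant and genuinely splits off an acyclic summand that disappears after passing to $E^1$, so that the induced sequence of $q$-th rows $\widetilde E^1_{*,q}$ is short exact as a sequence of $G$-complexes and not merely a distinguished triangle — this is where the realization by the Nash-constructible filtration at the chain level, rather than merely in $H\circ\mathcal{C}_-$, is essential, exactly as emphasized in the introduction. Once that chain-level short exact sequence of $G$-complexes is in hand, exactness of $\Hom_G(F_*,-)$ and the snake lemma do the rest; the reindexing between $(i,j)$ and the total degree of $L_*$ is routine and parallels lemma \ref{lem_spec_seq_fil_can_equiv}.
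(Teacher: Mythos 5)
Your overall strategy (chain-level additivity of the Nash-constructible filtration, then homological algebra over $\mathbb{Z}_2[G]$) is the right one, but the final step contains a genuine error. You write ${}^{~q}_{II}\!E^2_{i,j} = H_j\bigl(H^{-i}(G,\widetilde E^1_{*,q})\bigr) = H_{i+j}\bigl(L_*(\widetilde E^1_{*,q})\bigr)$: the second equality is false. The total homology $H_{i+j}\bigl(L_*(\widetilde E^1_{*,q})\bigr) = H_{i+j}(G,\widetilde E^1_{*,q})$ is the \emph{abutment} ${}^G\!\widetilde E^2_{i+j,q}$ of the spectral sequence ${}^{~q}_{II}\!E$, not its page $2$. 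So the long exact sequence you extract from your short exact sequence of total complexes is the additivity sequence for ${}^G\!\widetilde E^2_{\cdot,q}$ already recorded at the beginning of subsection \ref{subsec_equi_wei_spec} --- which is exactly the sequence that is \emph{not} finite and that this proposition is designed to circumvent. To obtain the stated sequence you must hold the group-cohomology degree $i$ fixed and let only $j$ vary, i.e.\ you need, for each $i$, a short exact sequence of complexes $0 \to H^{-i}(G,\widetilde E^1_{*,q}(Y)) \to H^{-i}(G,\widetilde E^1_{*,q}(X)) \to H^{-i}(G,\widetilde E^1_{*,q}(X\setminus Y)) \to 0$. Exactness of $\Hom_G(F_*,-)$ on projectives does not give this: from the short exact sequence of double complexes, the rows only yield the group-cohomology \emph{long} exact sequence, whose connecting maps mix $H^{-i}$ with $H^{-i+1}$.

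The missing ingredient, which is the heart of the paper's proof, is that the chain-level sequences $0 \to \mathcal{N}_p C_k(Y) \to \mathcal{N}_p C_k(X) \to \mathcal{N}_p C_k(X \setminus Y) \to 0$ are split by the closure morphism $c \mapsto \overline{c}$, which is \emph{equivariant}. Hence the induced sequences on the associated graded $\widetilde E^1_{*,q} = E^0_{-q,*+2q}$ are split as sequences of $G$-modules, so they stay exact after applying $H^{-i}(G,-)$ (equivalently, the connecting homomorphisms vanish), and the homology long exact sequence of the resulting short exact sequence of bounded complexes is precisely the claimed finite sequence. A secondary problem with your route is the detour through mapping cones/cylinders and filtered quasi-isomorphisms: ${}^{~q}_{II}\!E$ is \emph{not} invariant under equivariant filtered quasi-isomorphism (this is the remark immediately following proposition \ref{eq_noneq_weight_spec}), so identifying the cone of $\mathcal{N}C_*(Y)\to\mathcal{N}C_*(X)$ with $\mathcal{N}C_*(X\setminus Y)$ only in $H\circ\mathcal{C}^G$ does not let you transport the page-$2$ terms; you must use the genuine chain-level restriction/closure sequences of the Nash-constructible filtration, as you correctly suspect in your last paragraph but do not actually implement.
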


\begin{proof} From the exactness of the equivariant short sequences 
\begin{equation*} \label{suite_ex_court_equiv_ch} 0 \rightarrow \mathcal{N}_p C_k(Y) \rightarrow \mathcal{N}_p C_k(X) \rightarrow \mathcal{N}_p C_k(X \setminus Y) \rightarrow 0 \tag{4.1} \end{equation*}
(\cite{MCP} Proof of Theorem 3.6 and \cite{Pri-CA} Remarque 3.9) follows, by a diagram chase, the exactness of the equivariant short sequences 
\begin{equation*} \label{suite_ex_court_suite_spec} 0 \rightarrow E^0_{p,q}(Y) \rightarrow E^0_{p,q}(X) \rightarrow E^0_{p,q}(X \setminus Y) \rightarrow 0 \tag{4.2} \end{equation*}
which then induce the long exact sequences in group cohomology  
$$\cdots \rightarrow H^{k}\left(G,E^0_{p,q}(Y)\right) \rightarrow H^{k}\left(G,E^0_{p,q}(X)\right) \rightarrow H^{k}\left(G,E^0_{p,q}(X \setminus Y)\right) \rightarrow H^{k+1}\left(G,E^0_{p,q}(Y)\right) \cdots$$
Since, for any $k$ and $p$ in $\mathbb{Z}$, the short exact sequence (\ref{suite_ex_court_equiv_ch}) is split by the closure morphism $c \in \mathcal{N}_p C_k(X \setminus Y) \mapsto \overline{c} \in \mathcal{N}_p C_k(X)$, which is an equivariant morphism by \cite{Pri-CA} Proposition 2.4, so is, for any $p,q \in \mathbb{Z}$, the short exact sequence (\ref{suite_ex_court_suite_spec}) by the induced equivariant morphism $E^0_{p,q}(X \setminus Y) \rightarrow E^0_{p,q}(X)$, and the short sequence of complexes
$$0 \rightarrow H^{k}\left(G,E^0_{p,*}(Y)\right) \rightarrow H^{k}\left(G,E^0_{p,*}(X)\right) \rightarrow H^{k}\left(G,E^0_{p,*}(X \setminus Y)\right) \rightarrow 0$$
is exact (by functoriality of $H^k\left(G, \cdot\right)$). We obtain the result by considering the induced long exact sequences in homology.
\end{proof}

If all the spaces ${}_{II}^{~k}\!\widehat{E}^1$ (or equivalently all the spaces ${}^{~q}_{II}\!E^2_{i,j}$ such that $q + i =k$) are finite-dimensional, we can define the quantity 
$$B^G_k(X) := \chi\left({}_{II}^{~k}\!\widehat{E}^1\right) = \sum_{q+i = k} \sum_{j \geq 0} (-1)^j \dim_{\mathbb{Z}_2} {}^{~q}_{II}\!E^2_{i,j}(X)$$
and, as a consequence of the previous proposition \ref{suite_ex_long_finie}, we have 
\begin{theo} \label{inv_add_B_th} The invariants $B_k^G( \cdot)$ are additive on real algebraic $G$-varieties on which they are well-defined.
\end{theo}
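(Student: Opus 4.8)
The plan is to prove additivity of $B_k^G$ by combining the finite long exact sequences of Proposition \ref{suite_ex_long_finie} across all rows $q$ and columns $i$ simultaneously, and then invoking the standard fact that Euler characteristics are additive along (finite) long exact sequences of finite-dimensional vector spaces. First I would fix an equivariant closed inclusion $Y \subset X$ in $\mathbf{Sch}_c^G(\mathbb{R})$, and recall from Proposition \ref{suite_ex_long_finie} that for every pair $(q,i) \in \mathbb{Z}^2$ there is a long exact sequence
$$\cdots \rightarrow {}^{~q}_{II}\!E^2_{i,j}(Y) \rightarrow {}^{~q}_{II}\!E^2_{i,j}(X) \rightarrow {}^{~q}_{II}\!E^2_{i,j}(X \setminus Y) \rightarrow {}^{~q}_{II}\!E^2_{i,j-1}(Y) \rightarrow \cdots$$
which is finite (only finitely many $j$ contribute, since ${}^{~q}_{II}\!E^2_{i,j}$ is bounded in $j$ by the dimension of the variety and by the bounds on the non-equivariant weight spectral sequence from \cite{MCP} section 1.3). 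Taking the alternating sum of dimensions over $j$ in this finite exact sequence yields, by the standard Euler characteristic argument,
$${}^{~q}\!B_i(X) = {}^{~q}\!B_i(Y) + {}^{~q}\!B_i(X \setminus Y)$$
for every $q$ and $i$, under the standing assumption that all the spaces involved are finite-dimensional.

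Next I would sum these identities over the diagonal $q + i = k$. Since $B_k^G(X) = \sum_{q+i=k} {}^{~q}\!B_i(X)$ by Definition \ref{inv_add_B}, and since there are only finitely many pairs $(q,i)$ with $q+i=k$ that contribute a non-zero term (again because of the boundedness of the weight spectral sequence: one needs $0 \le q \le d$ and $0 \le i$-range controlled by $d$, cf.\ Corollary \ref{equiv_weight_spec_bounds}), the diagonal sum is finite and we get
$$B_k^G(X) = \sum_{q+i=k} \big( {}^{~q}\!B_i(Y) + {}^{~q}\!B_i(X \setminus Y) \big) = B_k^G(Y) + B_k^G(X \setminus Y),$$
which is exactly the additivity statement. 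One should also note the normalization/empty-set case $B_k^G(\emptyset) = 0$, which follows immediately since all the chain complexes and hence all the $E^1$-terms vanish for $\emptyset$.

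The main obstacle, as the authors flag in the remark preceding the theorem, is finiteness: the invariant $B_k^G$ is only defined when the spaces ${}^{~q}_{II}\!E^2_{i,j}(X) = H_j(H^{-i}(G,\widetilde{E}^1_{*,q}(X)))$ are finite-dimensional, and this is not established in general (it is taken as a hypothesis). So strictly speaking the proof is only carried out on the class of $G$-varieties where $B_k^G$ is well-defined, and one must check that this class is stable under the operations $Y \mapsto Y$ and $X \mapsto X \setminus Y$ appearing in additivity — but this is automatic from the long exact sequences themselves: if two of the three terms in each exact sequence are finite-dimensional, so is the third. Apart from this bookkeeping, the argument is entirely formal: it is just the additivity of Euler characteristics propagated through the finite long exact sequences of Proposition \ref{suite_ex_long_finie} and then through the finite diagonal sum of Definition \ref{inv_add_B}. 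I would therefore structure the write-up as: (1) recall the long exact sequence and its finiteness; (2) deduce additivity of each ${}^{~q}\!B_i$ via alternating sums; (3) sum over the diagonal $q+i=k$ to conclude.
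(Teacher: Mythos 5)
Your proposal is correct and is exactly the argument the paper intends: the printed proof is simply ``Consequence of \ref{suite_ex_long_finie}'', and your write-up (alternating sums over the finite long exact sequences to get additivity of each ${}^{~q}\!B_i$, then the finite diagonal sum from Definition \ref{inv_add_B}) is the standard filling-in of that one-liner. The only minor imprecision is that the finiteness of the diagonal sum comes from the bound $0 \leq q \leq d$ on the non-equivariant weight spectral sequence (the index $i$ is merely constrained to $i \leq 0$, not bounded below), but since $i = k - q$ is then determined, your conclusion stands.
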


\begin{rem} The additivity of the spectral sequences ${}^{~q}_{II}\!E$ and of the invariants $B^G_k$ strongly depends on the choice of the Nash-constructible filtration to realize the weight complex with action.
\end{rem}

\begin{ex} 
\begin{enumerate}
	\item Let $X$ be the real algebraic curve defined by the equation $y^2 = x^2 - x^4$ in $\mathbb{R}^2$ and consider the $\mathbb{Z}/2\mathbb{Z}$-action given by $(x,y) \mapsto (-x,y)$. Keeping the notations of example \ref{ex_suite_spec_poids_equiv}, we have
$${}^{~q}_{II}\!E^2_{i,j}(X) =
\begin{cases} 
\mathbb{Z}_2[X] \text{ if $q = 1$, $i \leq 0$ and $j = 0$,} \\
\mathbb{Z}_2 [X_1] \text{ if $q = 0$, $i \leq 0$ and $j = 1$,} \\
\mathbb{Z}_2 [\{p_0\}] \text{ if $q = 0$, $i \leq 0$ and $j = 0$,} \\
0 \text{ otherwise,}
\end{cases} 
\mbox{~~~~~~and~~~~~~}
B_k^G(X) =
\begin{cases}
1 \text{ if $k \leq 1$,} \\
0 \text{ otherwise.}
\end{cases}$$

	\item Take the same variety $X$ and equip it with the action given by $(x,y) \mapsto (x,-y)$. We have 
$${}^{~q}_{II}\!E^2_{i,j}(X) =
\begin{cases} 
\mathbb{Z}_2[X] \text{ if $q = 1$, $i \leq 0$ and $j = 0$,} \\
\mathbb{Z}_2 [X_1] \text{ if $q = 0$, $i = 0$ and $j = 1$,} \\
\mathbb{Z}_2 [\{p_0\}] \oplus \mathbb{Z}_2 [\{p_1\}] \text{ if $q = 0$, $i \leq 0$ and $j = 0$,} \\
0 \text{ otherwise,}
\end{cases}
\mbox{and~~}
B_k^G(X) =
\begin{cases}
1 \text{ if $k = 1$,} \\
2 \text{ if $k = 0$,} \\
3 \text{ if $k < 0$,} \\
0 \text{ otherwise,}
\end{cases}$$
(where $p_1$ is one of the two points being invariant under the action besides $p_0$).

\end{enumerate}
\end{ex}

\begin{rem} In each case and for all $k \in \mathbb{Z}$, $B^G_k(X)$ is equal to the $k^{\text{th}}$ equivariant virtual Betti number $\beta^G_k(X)$ of $X$ (see \cite{GF} Example 4.6).
\end{rem}

In the next paragraph, we study the other spectral sequence induced by the double complex~${}^k\!\widehat{C}$, focusing on the case $G = \mathbb{Z}/2\mathbb{Z}$. We use the Smith Nash-constructible exact sequence (theorem \ref{nash-smith_ex}) to compute it and extract additivity from this spectral sequence as well.

\subsection{The case $G = \mathbb{Z}/2\mathbb{Z}$} \label{spec_inv_z2z}

We fix $G := \mathbb{Z}/2\mathbb{Z}$.

Let $X$ be a real algebraic $G$-variety and let $k$ be an integer. We compute the spectral sequence ${}_{I}^{k}\!\widehat{E}(X)$ induced by the double complex $({}^k\!\widehat{C}_{\alpha,\beta})_{(\alpha,\beta) \in \mathbb{Z} \times \mathbb{Z}}$ by first taking the homology of $d^0$. We use the Smith Nash-constructible short exact sequence (theorem \ref{nash-smith_ex}) to show that
$${}^k_I\!\widehat{E}^1_{\alpha, \beta} =
\begin{cases}
\frac{\mathcal{N}_{\alpha} C_{\beta}\left(X^G\right)}{\mathcal{N}_{\alpha-1} C_{\beta}\left(X^G\right)} \mbox{ if } - k - \alpha \geq 1, \\
\\
 \frac{(\mathcal{N}_{\alpha} C_{\beta})^G}{(\mathcal{N}_{\alpha-1} C_{\beta})^G} \mbox{  if } - k - \alpha = 0,\\
 \\
 0 \mbox{ otherwise, }
\end{cases}$$
and consequently, the page ${}^{k}_{I}\!\widehat{E}^2$ is given by :
$$\xymatrix{ 
\cdots & H_{\beta + 1}\left(\frac{\mathcal{N}_{\alpha} C_{*}\left(X^G\right)}{\mathcal{N}_{\alpha - 1} C_{*}\left(X^G\right)}\right) & \cdots & H_{\beta + 1} \left(\frac{\mathcal{N}_{-k-1} C_{*}\left(X^G\right)}{\mathcal{N}_{-k-2} C_{*}\left(X^G\right)} \right) & H_{\beta+1}\left(\frac{(\mathcal{N}_{-k} C_{*})^G}{(\mathcal{N}_{-k-1} C_{*})^G}\right) \\
\cdots & H_{\beta}\left(\frac{\mathcal{N}_{\alpha} C_{*}\left(X^G\right)}{\mathcal{N}_{\alpha - 1} C_{*}\left(X^G\right)}\right) &  \cdots  & H_{\beta}\left(\frac{\mathcal{N}_{-k-1} C_{*}\left(X^G\right)}{\mathcal{N}_{-k-2} C_{*}\left(X^G\right)}\right) & H_{\beta}\left(\frac{(\mathcal{N}_{-k} C_{*})^G}{(\mathcal{N}_{-k-1} C_{*})^G}\right) \\
\cdots & H_{\beta-1}\left(\frac{\mathcal{N}_{\alpha} C_{\beta-1}\left(X^G\right)}{\mathcal{N}_{\alpha - 1} C_{\beta-1}\left(X^G\right)}\right) &  \cdots &  H_{\beta - 1}\left(\frac{\mathcal{N}_{-k-1} C_{*}\left(X^G\right)}{\mathcal{N}_{-k-2} C_{*}\left(X^G\right)} \right)&     H_{\beta-1} \left(\frac{(\mathcal{N}_{-k} C_{*})^G}{(\mathcal{N}_{-k-1} C_{*})^G} \right)
}$$
(the first non-zero column from the right side is the column $\alpha = -k$). In particular, the spectral sequence $\left({}^{k}_{I}\!\widehat{E}, {}^{k}\!\delta\right)$ degenerates at level two and one can compute the homology of the total complex associated to the double complex ${}^k\!\widehat{C}$. 

Indeed, let $\omega$ be an element of ${}^{k}_{I}\!\widehat{E}^2_{\alpha,\beta} = H_{\beta}\left(\frac{\mathcal{N}_{\alpha} C_{*}\left(X^G\right)}{\mathcal{N}_{\alpha - 1} C_{*}\left(X^G\right)}\right)$ with $\alpha < -k$. It can be represented by a chain $c \in \mathcal{N}_{\alpha} C_{\beta}(X^G)$ such that $\partial c \in \mathcal{N}_{\alpha-1} C_{\beta-1}(X^G)$. In order to compute the image of the element $\omega$ by ${}^{k}\!\delta^2$, we first apply the differential $d^1$ to its representative $\overline{c}$ in ${}^k\!\widehat{C}_{\alpha, \beta} = H^{-k-\alpha}\left(G, \frac{\mathcal{N}_{\alpha}C_{\beta}}{\mathcal{N}_{\alpha-1}C_{\beta}}\right)$, where $\overline{c}$ denotes the class of $c$ in $H^{-k-\alpha}\left(G, \frac{\mathcal{N}_{\alpha}C_{\beta}}{\mathcal{N}_{\alpha-1}C_{\beta}}\right)$. Then $d^1(\overline{c}) = \overline{\partial c} = 0 \in {}^k\!\widehat{C}_{\alpha, \beta-1} = H^{-k-\alpha}\left(G,\frac{\mathcal{N}_{\alpha}C_{\beta-1}}{\mathcal{N}_{\alpha-1}C_{\beta-1}}\right)$, since $\partial c \in \mathcal{N}_{\alpha-1} C_{\beta-1}(X^G) \subset  \mathcal{N}_{\alpha-1} C_{\beta-1}(X)$. Therefore, the zero element $c' = 0$ of ${}^k\!\widehat{C}_{\alpha+1, \beta-1}$ is such that $d^1(\overline{c}) = d^0(c')$, and finally $d^1(c') = 0 \in {}^k\!\widehat{C}_{\alpha+1, \beta-2}$ is a representative of ${}^{k}\!\delta^2(\omega) \in {}^{k}_{I}\!\widehat{E}^2_{\alpha+1,\beta-2}$. As a result, ${}^{k}\!\delta^2(\omega) = 0$. 

Furthermore, we read additive invariants on each column of the page ${}^{k}_{I}\!\widehat{E}^2$ : the columns $\alpha < -k$ are rows of the page $\widetilde{E}^2(X^G)$ of the reindexed weight spectral sequence of $X^G$ and the split short exact sequences (\ref{suite_ex_court_equiv_ch}) also induce long exact sequences of homology of the pairs $\left( (\mathcal{N}_p C_*)^G,  (\mathcal{N}_p C_*)^G \right)$.
\\

When all the vector spaces $H_{\beta}\left(\frac{(\mathcal{N}_{-k} C_{*})^G}{(\mathcal{N}_{-k-1} C_{*})^G}\right)$ are finite-dimensional, we can consider the Euler characteristic of the page ${}^{k}_{I}\!\widehat{E}^2$ and this defines an additive invariant on the category of real algebraic $G$-varieties on which it is well-defined. When the involved quantities are well-defined, we have the following formula :

\begin{prop} \label{form_b-k-g} For $G = \mathbb{Z}/2\mathbb{Z}$, for all $k \in \mathbb{Z}$ and all real algebraic $G$-varieties $X$ for which the following quantities are well-defined, we have
$$B_k^G(X) =  (-1)^{k} \chi\left(H_{*}\left(\frac{(\mathcal{N}_{-k} C_{*})^G}{(\mathcal{N}_{-k-1} C_{*})^G}\right) \right) + \sum_{q \geq k + 1} \beta_q\left(X^G\right).$$
\end{prop}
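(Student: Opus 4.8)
The plan is to compute the Euler characteristic of the spectral sequence \({}^{k}_{I}\!\widehat{E}\) by first identifying its page \(1\) explicitly, which is exactly the content of proposition \ref{calc_suite_spec_chapeau}. Once we know that \({}^{k}_{I}\!\widehat{E}^1_{\alpha,\beta}\) equals \((\mathcal{N}_{\alpha} C_{\beta})^G/(\mathcal{N}_{\alpha-1} C_{\beta})^G\) in the single column \(\alpha = -k\), equals \(\mathcal{N}_{\alpha} C_{\beta}(X^G)/\mathcal{N}_{\alpha-1} C_{\beta}(X^G)\) in the columns \(\alpha \leq -k-1\), and vanishes for \(\alpha > -k\), the differential \(d^1\) on page \(1\) (which is the vertical differential induced by the boundary operator, since the horizontal maps have been used to form \({}^{k}_{I}\!\widehat{E}^1\)) is the boundary map \(\partial\) in each column. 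Hence \({}^{k}_{I}\!\widehat{E}^2_{\alpha,\beta}\) is the homology \(H_\beta\) of the corresponding complex in each column.

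Next I would argue that the spectral sequence degenerates at page \(2\). For this, observe that the \(d^2\) differential would go from the column \(\alpha = -k-1\) to the column \(\alpha = -k+1\), but the latter is already zero; and all higher columns \(\alpha \leq -k-1\) consist of homologies of the complexes \(\mathcal{N}_\bullet C_*(X^G)/\mathcal{N}_{\bullet-1} C_*(X^G)\), between which the only possibly nonzero higher differentials \(d^r\) for \(r \geq 2\) would land in a column at distance \(r \geq 2\), hence would either start or end in the region where the page-\(2\) terms vanish — this needs a small check but follows from the fact that \(d^r\) shifts \(\alpha\) by \(-r\) and \(\beta\) by \(r-1\), combined with the fact that the only column to the right of \(\alpha = -k\) that is nonzero is none. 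Actually, since there is only one column to the right of the wall at \(\alpha = -k-1\), namely \(\alpha=-k\) itself, and nothing beyond it, every \(d^r\) with \(r\geq 2\) out of a column \(\alpha_0 \le -k-1\) lands in column \(\alpha_0 - r \le -k-1-r < -k-1\), which is fine, but \emph{into} column \(\alpha_0\) it comes from column \(\alpha_0 + r \ge -k-1+2 = -k+1 > -k\), which is zero; thus all \(d^r\), \(r\ge 2\), vanish and \({}^{k}_{I}\!\widehat{E}^2 = {}^{k}_{I}\!\widehat{E}^\infty\). (The degeneration on page \(2\) is in fact asserted just before the statement, so I may simply invoke it.)

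Finally, compute the Euler characteristic. Since the Euler characteristic of a spectral sequence is independent of the page (from the first on which it is defined), and since \(B_k^G(X) = \chi\big({}_{I}^{k}\!\widehat{E}\big)\) by theorem \ref{geom_min_b-k-g} and the remark following it, I get
\[
B_k^G(X) = \sum_{\alpha,\beta} (-1)^{\alpha+\beta}\, \dim_{\mathbb{Z}_2}\, {}^{k}_{I}\!\widehat{E}^2_{\alpha,\beta}.
\]
The column \(\alpha = -k\) contributes \((-1)^{-k}\sum_\beta (-1)^\beta \dim H_\beta\big((\mathcal{N}_{-k} C_*)^G/(\mathcal{N}_{-k-1} C_*)^G\big) = (-1)^k \chi\big(H_*\big((\mathcal{N}_{-k} C_*)^G/(\mathcal{N}_{-k-1} C_*)^G\big)\big)\). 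Each column \(\alpha \leq -k-1\) contributes \((-1)^\alpha \chi\big(H_*\big(\mathcal{N}_\alpha C_*(X^G)/\mathcal{N}_{\alpha-1} C_*(X^G)\big)\big)\), and summing these over \(\alpha\) reconstructs, up to sign bookkeeping, the Euler characteristic of the Nash-constructible spectral sequence of \(X^G\) restricted to the appropriate range of filtration degrees; by the non-equivariant theory (\cite{MCP} section 1.3, giving the virtual Betti numbers as alternating sums of the reindexed weight spectral sequence rows) this sum equals \(\sum_{q \geq k+1} \beta_q(X^G)\). The bookkeeping here — matching the filtration index \(\alpha\) with the reindexed row index \(q\), so that the columns \(\alpha = -q\) for \(q \geq k+1\) produce exactly the virtual Betti numbers \(\beta_q(X^G)\) with the correct signs — is the step I expect to require the most care; the input from proposition \ref{calc_suite_spec_chapeau} is the real engine, and once its indexing is aligned with the reindexing \(p' = 2p+q\), \(q' = -p\) used for the weight spectral sequence, the formula drops out.
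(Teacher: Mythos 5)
Your proposal is correct and follows essentially the same route as the paper: identify page $1$ via Proposition \ref{calc_suite_spec_chapeau}, take column homology to get page $2$, and compute the page-independent Euler characteristic, with the column $\alpha=-k$ giving the invariant-chains term and the columns $\alpha\le -k-1$ giving $\sum_{q\ge k+1}\beta_q(X^G)$ via the identification $H_\beta\bigl(\mathcal{N}_\alpha C_*(X^G)/\mathcal{N}_{\alpha-1}C_*(X^G)\bigr)=\widetilde{E}^2_{\alpha+\beta,-\alpha}(X^G)$, exactly as in the paper. One minor caveat: your degeneration argument only rules out differentials $d^r$, $r\ge 2$, landing in a column from the vanishing region, not those running between two columns with $\alpha\le -k-1$, but this is immaterial since the Euler-characteristic computation does not require degeneration.
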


\begin{proof} When all the spaces $H_{\beta}\left(\frac{(\mathcal{N}_{-k} C_{*})^G}{(\mathcal{N}_{-k-1} C_{*})^G}\right)$ are finite-dimensional, we have 
$$\chi\left({}^{k}_{I}\!\widehat{E}^2\right) = (-1)^{k} \chi\left(H_{*}\left(\frac{(\mathcal{N}_{-k} C_{*})^G}{(\mathcal{N}_{-k-1} C_{*})^G}\right) \right) + \sum_{\alpha \leq -k-1} (-1)^{\alpha} \chi\left(\widetilde{E}^2_{\alpha + \beta, -\alpha}\left(X^G\right)\right)$$ 
and, for all $\alpha \in \mathbb{Z}$, $\chi\left( \widetilde{E}^2_{\alpha + *, -\alpha}\left(X^G\right)\right) = (-1)^{\alpha} \beta_{-\alpha} \left(X^G\right)$ by \cite{MCP} section 1.3. We conclude by recalling that the Euler characteristics of two spectral sequences converging to the same homology are equal from the level where they are well-defined (see for example \cite{McCleary}). 
\end{proof}

We list below cases where the quantity $\chi({}^{k}_{I}\!\widehat{E}^2)$ is well-defined and coincides with the $k^{\text{th}}$ equivariant virtual Betti number of $X$ (\cite{GF}).

\begin{theo} \label{coinc_b-k-g_eq-virt-bet} Let $G = \mathbb{Z}/2\mathbb{Z}$ and $X$ be a real algebraic $G$-variety. We have 
$$\chi({}^{k}_{I}\!\widehat{E}^2) = \beta_k^G(X)$$
if \begin{itemize}
	\item $k < 0$ and $X$ is any real algebraic $G$-variety,
	\item $X$ is a compact real algebraic $G$-variety equipped with a fixed-point free action, for all $k \in \mathbb{Z}$,
	\item $\dim X = d$ and $k = d$ ($X$ is any real algebraic $G$-variety),
	\item $X$ is a $1$-dimensional real algebraic $G$-variety, for all $k \in \mathbb{Z}$.
\end{itemize}
\end{theo}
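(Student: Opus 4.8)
The plan is to dispatch the four bullets in turn. The first two are exactly the two corollaries stated immediately above this theorem, so only the cases $k=d=\dim X$ and $\dim X=1$ require a genuine argument; in both I would reduce, using additivity (\ref{inv_add_B_th}) together with equivariant compactification and equivariant resolution of singularities, to the case of a compact nonsingular variety, where the equivariant weight complex is $\mathcal{F}^{can}C^G_*$ (\ref{rem_fil_geom_act}).

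\emph{Case $k=d=\dim X$.} Note first that ${'\!B}_d^G(X)$ is automatically defined: the graded complex $(\mathcal{N}_{-d}C_*(X))^G/(\mathcal{N}_{-d-1}C_*(X))^G$ is a subquotient of the finite-dimensional space $C_d(X)$, and $\sum_{q\ge d+1}\beta_q(X^G)=0$ because $\dim X^G\le d$. By the vanishing range of $\widetilde{E}^2$ recalled in \ref{equiv_weight_spec_bounds} and the identification $H_\beta(\mathcal{N}_\alpha C_*/\mathcal{N}_{\alpha-1}C_*)=\widetilde{E}^2_{\alpha+\beta,-\alpha}$ from the proof of \ref{form_b-k-g}, the complex $\mathcal{N}_{-d}C_*/\mathcal{N}_{-d-1}C_*$ is concentrated in degree $d$; in particular ${'\!B}_d^G$ vanishes on varieties of dimension $<d$, and so does $\beta_d^G$ (by additivity of Fichou's invariants and \ref{lem_formula_hom_equiv}). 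Choosing an equivariant compactification $\bar X$ and an equivariant resolution of $\bar X$, all the extra strata have dimension $<d$, so additivity reduces the claim to ${'\!B}_d^G(\widetilde X)=\beta_d^G(\widetilde X)$ for $\widetilde X$ compact nonsingular of dimension $d$. For such $\widetilde X$ the weight spectral sequence degenerates with $\widetilde{E}^2_{0,d}(\widetilde X)=H_d(\widetilde X)$ (proof of \ref{var_comp-non-sing_equiv}), so $\mathcal{N}_{-d}C_d(\widetilde X)=\ker\partial_d(\widetilde X)=H_d(\widetilde X)$ as $\mathbb{Z}_2[G]$-modules, whence ${'\!B}_d^G(\widetilde X)=\dim_{\mathbb{Z}_2}(H_d(\widetilde X))^G$. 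Using $C_{d+1}(\widetilde X)=0$ and $\dim\widetilde X^G\le d$, \ref{lem_formula_hom_equiv} identifies this with $\dim_{\mathbb{Z}_2}H_d(\widetilde X;G)=\beta_d^G(\widetilde X)$.

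\emph{Case $\dim X=1$.} For $k<0$ this is the first bullet, for $k=1$ it is the preceding case, and for $k\ge 2$ both invariants vanish (for $\beta_k^G$ because $H_k(\cdot;G)=0$ above the dimension by \ref{lem_formula_hom_equiv}; for ${'\!B}_k^G$ because then $\mathcal{N}_{-k}C_*(X)=0$ and $\dim X^G<k$), so only $k=0$ remains. Compactifying and resolving reduces $X$, via additivity, to compact nonsingular curves and finite $G$-sets. On a finite $G$-set $S$ everything lies in degree $0$ and both invariants equal $\dim_{\mathbb{Z}_2}(C_0(S))^G$. On a compact nonsingular curve $\widetilde X$ one has $\mathcal{N}_{-1}C_1(\widetilde X)=\widetilde{E}^2_{0,1}(\widetilde X)=H_1(\widetilde X)=\ker\partial_1(\widetilde X)$ and $\mathcal{N}_0C_*=C_*$, so the complex computing the first summand of ${'\!B}_0^G(\widetilde X)$ in \ref{form_b-k-g} is $(C_0(\widetilde X))^G$ in degree $0$ and $(C_1(\widetilde X))^G/(\ker\partial_1(\widetilde X))^G$ in degree $1$, with differential induced by $\partial_1$. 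This differential is injective, its kernel being represented by invariant $1$-cycles, i.e. by $(\ker\partial_1(\widetilde X))^G$, which is zero in the quotient; hence $H_1=0$ and $H_0=(C_0(\widetilde X))^G/\partial_1((C_1(\widetilde X))^G)$. Since $\widetilde X^G$ is compact nonsingular, $\sum_{q\ge 1}\beta_q(\widetilde X^G)=\sum_{i\ge 1}\dim_{\mathbb{Z}_2}H_i(\widetilde X^G)$, and adding this to $\dim_{\mathbb{Z}_2}H_0$ yields, by \ref{lem_formula_hom_equiv}, exactly $\dim_{\mathbb{Z}_2}H_0(\widetilde X;G)=\beta_0^G(\widetilde X)$.

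The hard part is the geometric input on the Nash-constructible filtration, not the homological bookkeeping: one needs that on a compact nonsingular variety the deepest top-degree piece $\mathcal{N}_{-d}C_d$ is precisely the space of fundamental cycles $\ker\partial_d$ (equivalently $\widetilde{E}^2_{0,d}=H_d$ there), and that the graded pieces $\mathcal{N}_\alpha C_j/\mathcal{N}_{\alpha-1}C_j$ vanish outside the range dictated by the weight spectral sequence. One must also keep in mind that taking $G$-invariants is not exact: the crucial vanishing $H_1=0$ in the curve case works only because we quotient $C_*^G$ by the invariants $(\mathcal{N}_{-1}C_*)^G$ of the \emph{subcomplex}, and not by $(C_*/\mathcal{N}_{-1}C_*)^G$. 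Everything else follows from additivity of $B_k^G$ (\ref{inv_add_B_th}) and of the virtual Betti numbers, combined with \ref{form_b-k-g} and \ref{lem_formula_hom_equiv}.
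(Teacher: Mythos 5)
Your proof follows the same route as the paper's: the first two bullets are exactly the two corollaries preceding the theorem, and the last two are handled by computing the graded invariant complex $(\mathcal{N}_{-k}C_*)^G/(\mathcal{N}_{-k-1}C_*)^G$, identifying its Euler characteristic with $\dim_{\mathbb{Z}_2}H_k(X;G)$ on compact nonsingular varieties via lemma \ref{lem_formula_hom_equiv}, and invoking additivity; you merely make explicit the compactification/resolution reduction that the paper leaves implicit (relying on the characterization of $\beta_k^G$ as the unique additive invariant agreeing with $\dim H_k(\cdot\,;G)$ on compact nonsingular varieties). Two local justifications should be repaired. First, $C_d(X)$ is \emph{not} finite-dimensional in general (already $C_1(\mathbb{R})$ contains the independent classes of all closed intervals), so well-definedness of ${'\!B}_d^G$ cannot be argued by exhibiting a subquotient of $C_d(X)$; the correct reason, which is the one the paper gives, is the inclusion $\mathcal{N}_{-d}C_d(X)\subseteq\ker\partial_d=H_d(X)$, the top Borel--Moore homology, which is finite-dimensional. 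Second, the concentration of $\mathcal{N}_{-d}C_*/\mathcal{N}_{-d-1}C_*$ in degree $d$ is a property of the filtration itself ($\mathcal{N}_\alpha C_j=0$ for $\alpha<-j$ and $C_j=0$ for $j>d$), not a consequence of the vanishing range of $\widetilde{E}^2$: vanishing of the homology of the graded complex would not survive the non-exact functor $\Gamma^G$, whereas vanishing of the complex does, and it is the latter you need. With these two corrections the argument is complete and matches the paper's.
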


\begin{proof}

\begin{enumerate} 
\item If $k < 0$, we have $\chi({}^{k}_{I}\!\widehat{E}^2) = \sum_{q \geq k + 1} \beta_q\left(X^G\right) = \sum_{q \geq 0} \beta_q\left(X^G\right)$. If now we assume $X$ to be compact nonsingular, then the fixed points set $X^G$ is also compact nonsingular and $\chi({}^{k}_{I}\!\widehat{E}^2) = \sum_{q\geq 0} \dim_{\mathbb{Z}_2} H_q\left(X^G\right) = \dim_{\mathbb{Z}_2} H_k(X ; G),$
since $k<0$, by lemma \ref{lem_formula_hom_equiv}.

\item To show the second point, suppose $X$ is a compact real algebraic $G$-variety equipped with a fixed-point free action and let $k \in \mathbb{Z}$. Then $X_{\mathbb{R}}/G$ is an arc-symmetric set and, by proposition \ref{fil_Nash_quotient}, there is an isomorphism of complexes $(\mathcal{N}_{-k} C_{*}(X))^G \cong \mathcal{N}_{-k} C_{*}(X_{\mathbb{R}}/G)$. Then all homology groups $H_{\beta}\left(\frac{(\mathcal{N}_{-k} C_{*})^G}{(\mathcal{N}_{-k-1} C_{*})^G}\right) = H_{\beta}\left(\frac{\mathcal{N}_{-k} C_{*}(X_{\mathbb{R}}/G)}{\mathcal{N}_{-k-1} C_{*}(X_{\mathbb{R}}/G)}\right) = \widetilde{E}^2_{-k+\beta,k}(X_{\mathbb{R}}/G)$ are finite-dimensional and, since $X^G = \emptyset$, we have 
$$\chi({}^{k}_{I}\!\widehat{E}^2) = (-1)^k \sum_{\beta} (-1)^{\beta} \dim_{\mathbb{Z}_2} \widetilde{E}^2_{-k+\beta,k}(X_{\mathbb{R}}/G) = \beta_k(X_{\mathbb{R}}/G).$$
The formula $\beta_k(X_{\mathbb{R}}/G) = \beta_k^G(X)$ of \cite{GF} Proposition 3.15 provides the result.

\item If $d$ is the dimension of $X$, we have $H_{n}\left(\frac{(\mathcal{N}_{-d} C_{*}(X))^G}{(\mathcal{N}_{-d-1} C_{*}(X))^G}\right) = 
\begin{cases} 
(\mathcal{N}_{-d} C_{d}(X))^G \mbox{ if $n = d$,}
\\ 0 \mbox{ otherwise.}
\end{cases}$
Then $\chi({}^{-d}_{I}\!\widehat{E}^2) = \dim_{\mathbb{Z}_2} (\mathcal{N}_{-d} C_{d}(X))^G = \dim_{\mathbb{Z}_2} (\widetilde{E}^2_{0,d}(X))^G$ (recall that $\mathcal{N}_{\alpha} C_{\beta}(X) = 0$ for $\alpha < -d$) and, if $X$ is compact nonsingular, $(\mathcal{N}_{-d} C_{d}(X))^G = (ker~\partial_d)^G = H_d\left((C_*(X))^G \right) = H_d(X ; G)$ (see lemma \ref{lem_formula_hom_equiv}).

\item Finally assume $X$ to be a $1$-dimensional real algebraic $G$-variety. Then
$$H_{n}\left(\frac{( C_{*}(X))^G}{(\mathcal{N}_{-1} C_{1}(X))^G}\right) =
\begin{cases}
\frac{\left(ker~\partial_1\right)^G}{(\mathcal{N}_{-1} C_{*}(X))^G} \mbox{ if $n = 1$,}
\\ H_0\left((C_*(X))^G\right) \mbox{ if $n = 0$,}
\\ 0 \mbox{ otherwise,}
\end{cases}
$$
and $\chi({}^{0}_{I}\!\widehat{E}^2)$ is well-defined ($H_0\left((C_*(X))^G\right)$ is a subspace of $H_0(X ; G)$ by lemma \ref{lem_formula_hom_equiv}). Furthermore, if $X$ is compact nonsingular, we have $(\mathcal{N}_{-1} C_{1}(X))^G = (ker~\partial_1)^G$ and $\chi({}^{0}_{I}\!\widehat{E}^2) = \dim_{\mathbb{Z}_2} H_0\left((C_*(X))^G\right) + \dim_{\mathbb{Z}_2} H_1\left(X^G\right) = \dim_{\mathbb{Z}_2} H_0(X ; G)$.
\end{enumerate}
\end{proof}

Below, we use the extension criterion \ref{crit_action} to show the existence of a filtered complex and a spectral sequence from which we can recover the equivariant virtual Betti numbers. This will allow us to establish sufficient conditions for the Euler characteristics $\chi({}^{k}_{I}\!\widehat{E}^2)$ to be well-defined and to coincide with the equivariant virtual Betti numbers in general case.

\begin{prop} \label{compl_poids_inv} The functor $\mathbf{V}^G(\mathbb{R}) \longrightarrow H o \, \mathcal{C}$ which associates to every projective nonsingular $G$-variety the filtered complex $\left(F^{can} C_*(X)\right)^G = F^{can} (C_*(X))^G$ admits an extension to a functor 
$${}_G\!\mathcal{W} C_{*} : \mathbf{Sch}_c^G(\mathbb{R}) \longrightarrow H o \, \mathcal{C}$$
which verifies acyclicity and additivity properties. Furthermore, ${}_G\!\mathcal{W} C_{*}$ is unique up to a unique isomorphism of $H o \, \mathcal{C}$ with these extension, acyclicity and additivity properties.
\end{prop} 

\begin{proof} We use the extension theorem \ref{crit_action}. Since the functor $\mathbf{V}^G(\mathbb{R}) \longrightarrow H o \, \mathcal{C}~;~X \mapsto \left(F^{can} C_*(X)\right)^G$ factorizes through $\mathcal{C}$, it is $\Phi$-rectified and, if $X$ and $Y$ are two real algebraic $G$-varieties, we have $(F^{can} C_*(X \sqcup Y))^G = (F^{can} C_*(X))^G  \oplus (F^{can} C_*(Y))^G$ therefore it also verifies the condition (F1) of theorem \ref{crit_action}.

The verification of condition (F2) runs as in \cite{MCP}, proof of Theorem 1.1, replacing the homology of the chains by the homology of the invariant chains. Indeed, the short sequences for an equivariant blowing-up
$$0 \rightarrow H_k((C_*(\widetilde{Y}))^G) \rightarrow H_k((C_*(Y))^G) \oplus H_k((C_*(\widetilde{X}))^G) \rightarrow H_k((C_*(X))^G) \rightarrow 0$$
are exact (by lemma \ref{lem_formula_hom_equiv}, for every $G$-variety $Z$, $H_k\left((C_*(Z))^G\right) =  H_k(Z ; G) / \bigoplus_{i \geq k +1} H_i(Z^G)$ and the short sequences for $H_*(\,\cdot\,)$ and $H(\, \cdot \, ; G)$ are exact : see \cite{MCP-VB} proof of Proposition 2.1 and \cite{GF} lemma 3.6).
\end{proof}

If $X$ is a real algebraic $G$-variety, we have $H_*({}_G\!\mathcal{W} C_*(X)) = H_*\left((C_*(X))^G\right)$ (because the short exact sequences of semialgebraic chains for an equivariant closed inclusion and an acyclic square in $\mathbf{Sch}_c^G(\mathbb{R})$ are split by equivariant morphisms) and we can read the acyclicity and additivity properties of ${}_G\!\mathcal{W} C_*$ on the induced spectral sequence, denoted by ${}_G\!E$. We recover the equivariant virtual Betti numbers from the spectral sequence ${}_G\!E$ together with the virtual Betti numbers of the fixed points set :

\begin{theo} \label{nbres_betti_virt_equiv_inv_compl_inv} For every real algebraic $G$-variety $X$, if we reindex the spectral sequence ${}_G\!E$ as in section \ref{subsec_equi_wei_spec}, we have
$$\beta^G_q(X) =  \sum_{p \geq 0} (-1)^p \dim {}_G\!\widetilde{E}^2_{p,q}(X) + \sum_{i \geq q + 1} \beta_i\left(X^G\right)$$
for all $q \in \mathbb{Z}$.
\end{theo}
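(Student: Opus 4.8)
The plan is to show that the invariant ${}_G\!\beta_q$ and the equivariant virtual Betti numbers $\beta_q^G$ are both additive invariants on $\mathbf{Sch}_c^G(\mathbb{R})$, and then check that the formula $\beta_q^G = {}_G\!\beta_q + \sum_{i\geq q+1}\beta_i(X^G)$ holds on the generating class of compact nonsingular $G$-varieties, where all three types of invariants have an explicit homological description. First I would recall that $\beta_q^G$, the ${}_G\!\beta_q$ (by Proposition \ref{inv_add_compl_inv}) and the virtual Betti numbers $\beta_i$ (of $X^G$, via \cite{MCP-VB}) are all additive, hence so is the right-hand side $X \mapsto {}_G\!\beta_q(X) + \sum_{i\geq q+1}\beta_i(X^G)$ --- here one uses that $(X\setminus Y)^G = X^G \setminus Y^G$ for an equivariant closed inclusion, so the fixed-point operation commutes with the additivity long exact sequences. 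Then, by the uniqueness part of Fichou's construction of the equivariant virtual Betti numbers (\cite{GF}), namely that $\beta_q^G$ is the \emph{unique} additive invariant on $\mathbf{Sch}_c^G(\mathbb{R})$ whose value on a compact nonsingular $G$-variety $X$ is $\dim_{\mathbb{Z}_2} H_q(X;G)$, it suffices to verify the identity for $X$ compact nonsingular.

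For that verification, take $X$ compact nonsingular. By the second assertion of Proposition \ref{inv_add_compl_inv}, ${}_G\!\beta_q(X) = \dim_{\mathbb{Z}_2} H_q\big((C_*(X))^G\big)$. By Lemma \ref{lem_formula_hom_equiv}, for $X$ compact nonsingular $\partial_k = 0$ (we are working with closed-support chains and $X$ is closed, so $C_*(X)$ computes $H_*(X)$ with zero differential) and hence
$$H_q(X;G) = \big(H_q(X)\big)^G \oplus \bigoplus_{i\geq q+1} H_i(X^G);$$
moreover $(C_*(X))^G = (\ker\partial_q)^G = (H_q(X))^G$ in degree $q$, so $\dim_{\mathbb{Z}_2} H_q\big((C_*(X))^G\big) = \dim_{\mathbb{Z}_2}(H_q(X))^G$. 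Since $X^G$ is compact nonsingular, $\beta_i(X^G) = \dim_{\mathbb{Z}_2} H_i(X^G)$. Putting these together,
$${}_G\!\beta_q(X) + \sum_{i\geq q+1}\beta_i(X^G) = \dim_{\mathbb{Z}_2}(H_q(X))^G + \sum_{i\geq q+1}\dim_{\mathbb{Z}_2} H_i(X^G) = \dim_{\mathbb{Z}_2} H_q(X;G) = \beta_q^G(X),$$
where the last equality is Fichou's normalization of the equivariant virtual Betti numbers on compact nonsingular varieties. By uniqueness, the two additive invariants agree on all of $\mathbf{Sch}_c^G(\mathbb{R})$.

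The main obstacle, and the point that needs the most care, is the bookkeeping of the additivity: one must make sure that the decomposition in Lemma \ref{lem_formula_hom_equiv} is compatible, degree by degree, with the long exact sequences of additivity for an equivariant closed pair $Y\subset X$ --- in particular that both $q\mapsto \dim_{\mathbb{Z}_2} H_q((C_*)^G)$ and $q\mapsto \sum_{i\geq q+1}\beta_i((\cdot)^G)$ are \emph{separately} additive so their sum is, and that subtracting the second (known additive, by \cite{MCP-VB}) from $\beta_q^G$ (known additive, by \cite{GF}) leaves something additive that one may then identify with ${}_G\!\beta_q$. Equivalently, one can avoid invoking uniqueness twice by directly comparing the invariant weight spectral sequence to the equivariant weight / Hochschild--Serre picture, but the cleanest route is the normalization-plus-uniqueness argument above, since Fichou's $\beta_q^G$ is characterized by its restriction to $\mathbf{Reg}_{comp}^G(\mathbb{R})$.
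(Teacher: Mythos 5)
Your overall architecture is exactly the paper's: show the right-hand side is additive, verify the identity on compact nonsingular $G$-varieties, and conclude by the uniqueness characterization of Fichou's equivariant virtual Betti numbers as the additive invariants equal to $\dim_{\mathbb{Z}_2}H_q(\,\cdot\,;G)$ on $\mathbf{Reg}^G_{comp}(\mathbb{R})$. However, your verification step contains a genuine error. You assert that for $X$ compact nonsingular the differential of $C_*(X)$ vanishes; this is false --- $C_k(X)$ is generated by \emph{all} closed semialgebraic subsets of dimension $\leq k$, not by cycles, and $\partial$ is certainly nonzero (the class of a closed disk bounds a circle). The identification you derive from it, $H_q\bigl((C_*(X))^G\bigr)=(H_q(X))^G$, is also false in general: take $X=S^2$ with a fixed-point free involution, where by Proposition \ref{fil_Nash_quotient} the invariant chains compute $H_*(\mathbb{RP}^2)$, so $H_1\bigl((C_*(X))^G\bigr)=\mathbb{Z}_2$ while $(H_1(S^2))^G=0$. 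Consequently your displayed identity $\dim_{\mathbb{Z}_2}(H_q(X))^G+\sum_{i\geq q+1}\dim_{\mathbb{Z}_2}H_i(X^G)=\dim_{\mathbb{Z}_2}H_q(X;G)$ fails in that example for $q=1$ (left side $0$, right side $1$).

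The fix is immediate and is what the paper does: Lemma \ref{lem_formula_hom_equiv} already states, for \emph{every} $G$-variety, that $H_q(X;G)=H_q\bigl((C_*(X))^G\bigr)\oplus\bigoplus_{i\geq q+1}H_i(X^G)$ (its first summand $(\ker\partial_q)^G/\partial\bigl((C_{q+1}(X))^G\bigr)$ is by definition $H_q\bigl((C_*(X))^G\bigr)$); combining this with Proposition \ref{inv_add_compl_inv}, which gives ${}_G\!\beta_q(X)=\dim_{\mathbb{Z}_2}H_q\bigl((C_*(X))^G\bigr)$ for $X$ compact nonsingular, and with $\beta_i(X^G)=\dim_{\mathbb{Z}_2}H_i(X^G)$ for $X^G$ compact nonsingular, yields the required normalization without ever passing through $(H_q(X))^G$. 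With that replacement your argument coincides with the paper's proof.
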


\begin{proof} First, if $X$ is a $G$-variety of dimension $d$, the terms ${}_G\!\widetilde{E}^2_{p,q}(X)$ are finite-dimensional and bounded in a triangle with vertices $(0,0)$, $(0,d)$ and $(d,0)$. This comes from the isomorphism between the spectral sequence ${}_G\!\widetilde{E}(X)$ of a compact $G$-variety $X$ and the spectral sequence associated to an equivariant cubical hyperresolution of $X$, which is analog to the one considered in the section 1.3 of \cite{MCP} (see Proposition 1.8 and Corollary 1.10 of \cite{MCP}), replacing the semialgebraic chains by the invariant semialgebraic chains.  

Now fix $q \in \mathbb{Z}$. The additivity of the right-hand side expression above is deduced from the additivity property of ${}_G\!\mathcal{W} C_*$, as in \cite{MCP} section 1.3, and the additivity of virtual Betti numbers. If now we consider a compact nonsingular $G$-variety $X$, the filtered complex ${}_G\!\mathcal{W} C_*(X)$ is quasi-isomorphic in $\mathcal{C}$ to $(F^{can} C_*(X))^G$ (the inclusion of categories $\mathbf{V}^G(\mathbb{R}) \rightarrow \mathbf{Reg}^G_{comp}(\mathbb{R})$ has the extension property and the functor $X \mapsto \left(F^{can} C_*(X)\right)^G$ is additive and acyclic in $\mathbf{Reg}^G_{comp}(\mathbb{R})$) and
\begin{eqnarray*}
\sum_{p \geq 0} (-1)^p \dim {}_G\!\widetilde{E}^2_{p,q}(X) + \sum_{i \geq q + 1} \beta_i\left(X^G\right) & = & \dim_{\mathbb{Z}_2} H_q\left((C_*(X))^G\right) + \sum_{i \geq q+1} \dim_{\mathbb{Z}_2} H_i\left(X^G\right) \\
& = & \dim_{\mathbb{Z}_2} H_q(X ; G)
\end{eqnarray*}
by lemma \ref{lem_formula_hom_equiv}.
\end{proof}

Consequently, if the Nash-constructible filtration realized the functor ${}_G\!\mathcal{W} C_*$, we would have, for all real algebraic $G$-varieties and all $k \in \mathbb{Z}$,
\begin{eqnarray*}
\chi({}^{k}_{I}\!\widehat{E}^2) & = & (-1)^{k} \chi\left(H_{*}\left(\frac{(\mathcal{N}_{-k} C_{*})^G}{(\mathcal{N}_{-k-1} C_{*})^G}\right) \right) + \sum_{i \geq k + 1} \beta_i\left(X^G\right) \\
& = & \sum_{p \geq 0} (-1)^p \dim {}_G\!\widetilde{E}^2_{p,k}(X) + \sum_{i \geq k + 1} \beta_i\left(X^G\right) \\
& = & \beta^G_k(X)
\end{eqnarray*}

We already know that the functor $\mathbf{Sch}_c^G(\mathbb{R}) \longrightarrow H o \, \mathcal{C}~;~X \mapsto \left(\mathcal{N} C_*(X)\right)^G$ verifies acyclicity and additivity properties at the chain level thanks to the equivariant splittings of the short exact sequences of additivity (\ref{suite_ex_court_equiv_ch}) and of the analog short exact sequences of acyclicity (this is true for any finite group $G$). It remains to decide whether the equivariant filtered quasi-isomorphism $\mathcal{N} C_*(X) \rightarrow F^{can} C_*(X)$ for $X$ a compact and nonsingular real algebraic $G$-variety (see \cite{MCP}) is preversed by the functor $\Gamma^G$ or not. The clear identification of the behaviour of invariant chains under this morphism should require precise techniques of equivariant Nash geometry.

If the double complex ${}^k\!\widehat{C}$ does not realize the equivariant virtual Betti numbers, this means we identified new additive invariants on real algebraic $G$-varieties in terms of finite long exact sequences. The spectral sequences induced by the double complexes ${}^k\!\widehat{C}$, which are natural invariants of the equivariant geometry of real algebraic $G$-varieties, should be paired with the equivariant weight spectral sequence to which they are related.

 \vspace{0.5cm}
Fabien PRIZIAC
\\
Universit\'e de Bretagne Occidentale
\\
Laboratoire de Math\'ematiques de Bretagne Atlantique
\\
6, avenue Le Gorgeu CS 93837
\\
29238 BREST Cedex 3 (France)
\\
priziac.fabien@gmail.com

\end{document}